\DeclarePairedDelimiter\floor{\lfloor}{\rfloor}
\definecolor{labelkey}{gray}{.8}
\definecolor{refkey}{gray}{.8}
\definecolor{darkred}{rgb}{0.9,0.1,0.1}
\definecolor{darkgreen}{rgb}{0,0.5,0}
\newcommand{\tkcomment}[1]{\marginpar{\raggedright\scriptsize{\textcolor{blue}{#1}}}}
\newtheorem{theorem}{Theorem}[section]
\newtheorem{lemma}[theorem]{Lemma}
\newtheorem{proposition}[theorem]{Proposition}
\theoremstyle{remark}
\newtheorem{remark}[theorem]{Remark}
\renewenvironment{proof}[1][Proof]{ {\itshape \noindent {#1.}} }{$\Box$
\medskip}
\numberwithin{equation}{section}
\newcommand{\R}{\mathbb{R}}
\newcommand{\bbZ}{\mathbb{Z}}
\newcommand{\bbR}{\mathbb{R}}
\newcommand{\Z}{\mathbb{Z}}
\newcommand{\Pb}{\mathbb{P}}
\newcommand{\PP}{\mathbf{P}}
\newcommand{\E}{\mathbb{E}}
\newcommand{\F}{\mathcal{F}}
\newcommand{\B}{\mathcal{B}}
\newcommand{\G}{\mathcal{G}}
\newcommand{\cR}{\mathcal{R}}
\newcommand{\D}{\mathcal{D}}
\newcommand{\eps}{\varepsilon}
\def\les{\lesssim}
\newcommand{\Var}{\mathrm{Var}}
\newcommand{\la}{\langle}
\newcommand{\ra}{\rangle}
\newcommand{\EE}{\mathbf{E}}
\newcommand{\cM}{\mathscr{M}}
\newcommand{\cP}{\mathcal{P}}
\newcommand{\x}{\mathbf{x}}
\newcommand{\bT}{\mathbb{T}}
\newcommand{\cZ}{\mathcal{Z}}
\newcommand{\cY}{\mathcal{Y}}
\newcommand{\rhof}{\rho_{\mathrm{f}}}
\newcommand{\rhob}{\rho_{\mathrm{b}}}
\newcommand{\lf}{\lfloor}
\newcommand{\rf}{\rfloor}
\newcommand{\cal}{\mathcal}
\newcommand{\bx}{\mathbf{x}}
\newcommand{\sfZ}{\mathsf{Z}}
\newcommand{\sfp}{\mathsf{p}}
\begin{document}
\title{Some recent progress on the periodic KPZ equation}

\author{Yu Gu, Tomasz Komorowski}

\address[Yu Gu]{Department of Mathematics, University of Maryland, College Park, MD 20742, USA. }
\email{yugull05@gmail.com}

\address[Tomasz Komorowski]{Institute of Mathematics, Polish Academy of Sciences, ul.
Śniadeckich 8, 00-636 Warsaw, Poland. }
\email{tkomorowski@impan.pl}

\maketitle

\begin{center}{\em In  memory of Giuseppe Da Prato
  \\
   (July 23, 1936 - October 6, 2023)}
\end{center}

\begin{abstract}
We review recent progress on the study of the Kardar-Parisi-Zhang (KPZ) equation in a periodic setting, which describes the random growth of an interface in a cylindrical geometry. The main results include central limit theorems for the height of the interface and the winding number of the directed polymer in a periodic random environment. We present two different approaches for each result, utilizing either a homogenization argument or tools from Malliavin calculus. A surprising finding in the case of a $1+1$ spacetime white noise is that the effective variances for both the height and the winding number can be expressed in terms of independent Brownian bridges.

Additionally, we present two new results: (i) the explicit expression of the corrector used in the homogenization argument, and (ii) the law of the iterated logarithm for the height function.

\medskip

\noindent \textsc{Keywords:} KPZ equation, directed polymer, homogenization, Malliavin calculus.

\end{abstract}

\tableofcontents

\section{Introduction}

The Kardar-Parisi-Zhang (KPZ) equation is a stochastic PDE   regarded as the default model of surface growth subject to random perturbations \cite{kardar1986dynamic,HZ95,KS91}. It serves as a prototypical model in nonequilibrium statistical mechanics and is closely related to directed polymers in a random environment, asymmetric exclusion processes, last/first passage percolation, among others. The study of the KPZ equation and related models within its universality class has seen tremendous progress over the past fifteen years, including advances in solution theories and the weak and strong KPZ universality conjectures. For more comprehensive reviews, we refer to \cite{Cor12, Qua12,QS15} and the references cited therein.

One of the most  prominent features in the $1+1$ KPZ universality
class is the 1:2:3 scaling: the fluctuations at time $t\gg1$ are of
order $t^{1/3}$ and the correlation length is of order
$t^{2/3}$. Proving these scaling exponents remains a major challenge
for general models. On the other hand, for models formulated on a
compact domain, it is well-known -- or at least expected -- that due to fast mixing in time, the fluctuations of the height function should be of order $t^{1/2}$ and satisfy the central limit theorem (CLT).   Given these two distinct behaviors depending on the underlying domain, it seems natural to explore the transition between them. We are particularly interested in studying the random fluctuations as both the time and the size of the domain grow together, aiming to understand the transition from $t^{1/3}$ to $t^{1/2}$ 
  for the height fluctuations.

In this paper, we review our studies on the KPZ equation posed on a torus and discuss the proofs of the CLTs for both the height function and the winding number of the related polymer model. These CLTs should be regarded as homogenization-type results, where the limiting variances are viewed as effective diffusivities. One of the main messages we aim to convey is that, to understand the sub-diffusive $t^{1/3}$ 
  or super-diffusive $t^{2/3}$
  behaviors, one can perform a diffusive approximation by posing
  the equation on a large torus and analyze the asymptotic behavior of
  the effective diffusivity. From the perspective of mathematical physics, this can be seen as approximation through an infra-red cutoff. This idea is well-known in the study of
  anomalous behaviors in turbulent transport, where it is also natural
  to examine how the effective diffusivity behaves as the domain under
  consideration expands, see
  e.g. \cite{ABK24,CLF22,CMOW22,HKP-G16,HIKNP-G18,IKNR14,KO02}.

Our studies are closely related to recent works \cite{BL16, BL18,
  BL19, BL21, BLS20}, where the totally asymmetric simple exclusion
process on a large torus was studied in detail, with very precise
fluctuation results derived (see \cite{baik21} for a review). Although
both models involve growth in a cylindrical geometry, the perspectives
we took and the tools we have used are quite different.

One of our main contributions is the derivation of effective diffusivities in terms of the underlying invariant measure, which for the periodic KPZ equation with a $1+1$ spacetime white noise, is the Brownian bridge \cite{BG97, FQ15}. Similar questions can be asked for the open KPZ equation, which has Neumann boundary conditions. There has been significant recent progress in constructing explicit invariant measures \cite{BCY23,  BD21, BKWW21,CK21}, and we believe a central limit theorem similar to Theorem~\ref{t.cltheight} below can be derived for the open KPZ equation. It would be very interesting to study the asymptotics of the effective diffusivity in this case and derive results similar to Theorem~\ref{t.123}, depending on the boundary parameter.

A comprehensive introduction to the subject from a physics perspective can be found in \cite{Pr24}. We will discuss other related mathematical work throughout the paper as we present the main results. In the next section, we define the primary objects studied in this paper.

\subsection{Setup and notations}
To fix the notations, let $Z$ be the solution to the stochastic heat equation (SHE):
\begin{equation}\label{e.she}
\partial_t Z(t,x)=\frac12\Delta Z(t,x)+\beta \xi(t,x) Z(t,x), \quad\quad t>0, x\in\bT^d,
\end{equation}
where $\xi$ is a Gaussian noise over some probability space
$(\Omega,{\mathcal F},\PP)$ that is white in time:
\[
\EE\,\big[ \xi(t,x)\xi(s,y)\big]=\delta(t-s)R(x-y).
\]
Here $\EE$ is the expectation w.r.t. $\PP$ and $\bT^d$ is the $d-$dimensional torus of size
  $L$, which is the interval $[0,L]^d$ with
  identified endpoints. To
simplify the notations, we keep the dependence on $L$ implicit. The parameter $\beta>0$ is a fixed constant and we interpret the product between $\xi$ and $Z$ in \eqref{e.she} in the It\^o-Walsh sense \cite{DK14,walsh}. The function $R$ is the spatial covariance function and throughout the paper we consider two cases: 

(i) $0\leq R\in C^\infty(\bT^d)$ is a smooth function and $\int_{\bT^d} R(x)dx=1$;

(ii) $R(\cdot)=\delta(\cdot)$ in $d=1$ in which case $\xi$ is a $1+1$ spacetime white noise. 

   For the solution theory of \eqref{e.she}, we refer to  the monograph
\cite{daza} for the case (i) and  the lecture notes \cite{DK14,walsh}
for the case (ii). 
Starting from the non-negative initial data $Z(0,x)=Z_0(x)$, one can show that $Z$ stays positive almost surely, and we define 
\begin{equation}\label{e.defh}
h(t,x)=\log Z(t,x),  
\end{equation}
\begin{equation}\label{e.rho}
\rho(t,x)=\frac{Z(t,x)}{\int_{\bT^d}Z(t,x')dx'}.
\end{equation}
The function $h$ is shown to be the physical solution of the KPZ equation \cite{GP17,MH13}:
\begin{equation}\label{e.kpz}
\partial_t h=\frac12\Delta h+\frac12|\nabla h|^2-\frac12\beta^2R(0)+\beta\xi.
\end{equation}
For $R=\delta$ the above equation is only formal since $R(0)=\infty$,
corresponding to the  renormalization  needed to make sense of this
equation. Through a Feynman-Kac formula, which, again, only makes
sense in the case of $R\in C^\infty$, see \cite{BC95}, one can write
the solution $Z$ as
\tkcomment{$\leftarrow$} 
\begin{equation}\label{e.fk}
Z(t,x)=\E_B \big[e^{\beta\int_0^t \xi(t-s,x+B_s)ds-\frac{\beta^2}2R(0)t} Z_0(x+B_t)\big],
\end{equation}
where $B$ is a standard Brownian motion independent of $\xi$ and
$\E_B$ is the expectation on $B$.



 In this way, one can view $e^{\frac{\beta^2}2R(0)t} Z(t,x)$
as the partition function of a directed polymer in the random
environment $\xi$ and $\beta$ as the inverse temperature parameter.
The polymer path $(s,x+B_{t-s})_{0\le s\le t}$ starts at $s=t$
  from $(t,x)$ and runs backward in time, with a ``payoff'' function
  $Z_0(\cdot)$ at $s=0$. With this polymer interpretation, one can view
  $h=\log Z$ as the free energy.


%
%

\subsection{Organization of the paper}

The rest of the paper is organized as follows. In
Section~\ref{s.projective}, we present the results on the
$\{\rho(t,\cdot)\}_{t\geq0}$  process defined in \eqref{e.rho},
through which we also study the average growth speed associated with
\eqref{e.kpz}. In Sections~\ref{s.height} and \ref{s.winding}, we
review the results on the Gaussian fluctuations of the height function
and the winding number of the polymer path. Throughout
Sections~\ref{s.projective} to \ref{s.winding}, we explain the main
ideas of the proofs and provide heuristics, without delving into
technical details, and refer interested readers to the corresponding
papers for specifics.
In Sections~\ref{s.corrector} and \ref{s.lil}, we present two new
results: the explicit expressions for the corrector used in the
homogenization argument and the law of the iterated logarithm for the height function. Finally, we outline a few interesting open problems in Section~\ref{s.open}.

\subsection*{Acknowledgement} We would like to thank Alexander Dunlap
for the collaboration and multiple discussions on the
subject. We also thank the anonymous referee for several helpful suggestions and comments. Y. G. was partially supported by the NSF
through DMS-2203014.  T. K. acknowledges the support of the NCN grant 2020/37/B/ST1/00426.

\section{Polymer endpoint density as the projective process}
\label{s.projective}
Since  $\{Z(t,\cdot)\}_{t\geq0}$ is a Markov process which solves the
linear equation \eqref{e.she}, it is not hard to see
  that the process
$\{\rho(t,\cdot)\}_{t\geq0}$ is also a Markov process, taking values
in the space $D^\infty(\bT^d)$, which is the space containing all continuous probability densities w.r.t. the
  Lebesgue measure  on $\bT^d$. This process is sometimes referred to as the {\em projective process}, as it
can be viewed as the projection of $Z(t,\cdot)$ onto the unit
sphere  in $L^1(\bT^d)$. 
Such processes play a crucial role in the study of $Z$ itself. Roughly speaking, since the random perturbation $\xi$ in \eqref{e.she} was introduced multiplicatively, one might expect that the solution $Z$ either grows to infinity or decays to zero exponentially fast. This exponential growth/decay rate, known as the Lyapunov exponent, is strongly influenced by the long-term behavior of the projective process.
The goal of this section is to discuss various properties of the $\rho$ process. 

We introduce some notations. Let $\G_{t,s}(x,y)$ be the Green's function of the SHE: for any fixed $(s,y)$, it solves
\begin{equation}\label{e.defgreen}
\begin{aligned}
&\partial_t \G_{t,s}(x,y)=\frac12\Delta_x \G_{t,s}(x,y)+\beta \xi(t,x)\G_{t,s}(x,y), \quad\quad t>s, x\in\bT^d,\\
&\G_{s,s}(x,y)=\delta(x-y).
\end{aligned}
\end{equation}
Using $\G$, the solution to SHE can be rewritten as $Z(t,x)=\int_{\bT^d} \G_{t,0}(x,y)Z_0(y)dy$, and for $n\in\Z_+,x_n\in\bT^d$, one can also use the Green's function and write $Z(n,x_n)$ as 
\[
Z(n,x_n)=\int_{\bT^{nd}} 
\G_{n,n-1}(x_n,x_{n-1})\ldots\G_{1,0}(x_1,x_0)Z_0(x_0)dx_0\ldots dx_{n-1}.
\]
It is instructive to view the above expression as a continuous and infinite dimensional analog of the product of $n$ independent random matrices applied to the ``vector'' $Z_0(\cdot)$, generating the new ``vector'' $Z(n,\cdot)$. In other words, starting from the initial function $Z_0$, one iteratively applies the independent random integral operators with kernels given by $\G_{j,j-1}(x_j,x_{j-1})$, and $Z(n,\cdot)$ is the resulting (random) function after $n$ iterations. Similar to the study of the product of i.i.d. random matrices, it is crucial to track the direction of the vector after each iteration, which, in the infinite-dimensional setting, corresponds to the 
$\rho$ process. For a comprehensive introduction to the study of the
product of i.i.d. random matrices, from which one can draw an inspiration and borrow tools for the infinite-dimensional setting, we refer to \cite{BJ12}.

Using $\G$, one can write $\rho$ as 
\[
\rho(t,x)=\frac{Z(t,x)}{\int_{\bT^d} Z(t,x')dx'}=\frac{\int_{\bT^d}\G_{t,0}(x,y)Z_0(y)dy}{\int_{\bT^d}\G_{t,0}(x',y)Z_0(y)dydx'}.
\]
Fix each $t>0$,  $\rho(t,\cdot)$ is the density of the endpoint of a directed
  random polymer of length $t$, distributed initially with density
  $\rho(0,\cdot)=Z_0(\cdot)/\|Z_0\|_{L^1(\bT^d)}$, with $\xi$ being the random potential. To study $\rho$, we may as well assume that $\int_{\bT^d} Z_0(y)dy=1$ -- otherwise we normalize it which does not change the values of $\rho$. For the rest of the discussion, it is convenient to introduce the following notations on the endpoint densities
of the ``forward'' and ``backward'' polymer path on a cylinder. Let ${\mathcal M}_1(\bT^d)$ be the set of all Borel probability
measures  on $\bT^d$, considered with the topology of weak convergence
  of measures.
For
any $\nu\in{\mathcal M}_1(\bT^d)$ and $t>s$, we define 
\begin{equation}\label{e.forwardbackward}
\begin{aligned}
&\rhof(t,x;s,\nu)=\frac{\int_{\bT^d} \G_{t,s}(x,y)\nu(dy)}{\int_{\bT^{2d}}\G_{t,s}(x',y)\nu(dy)dx'}, \quad x\in\bT^d,\\
&\rhob(t,\nu;s,y)=\frac{\int_{\bT^d}\G_{t,s}(x,y)\nu(dx)}{\int_{\bT^{2d}}\G_{t,s}(x,y')\nu(dx)dy'},\quad y\in\bT^d.
\end{aligned}
\end{equation}
Here the subscripts ``$\mathrm{f}, \mathrm{b}$'' represent ``forward'' and ``backward'' respectively. When $\nu$ is a Dirac measure on $\bT^d$, we   write
$$\rhof(t,x;s,y)=\rhof(t,x;s,\delta_y)\quad \mbox{and}
\quad \rhob(t,x;s,y)=\rhob(t,\delta_x;s,y).
$$
Using the above notations, we can write $\rho(t,x)$ as 
\[
\rho(t,x)=\rhof(t,x;0,\nu), \quad\quad \mbox{ with } \quad\quad \nu(dx)=\tfrac{Z_0(x)dx}{\int_{\bT^d}Z_0(x')dx'}.
\]
For fixed $t>s$, we also have (\cite[Eq. (B.1)]{ADYGTK22})
\[
\{\G_{t,s}(x,y)\}_{x,y\in\bT^d}\stackrel{\text{law}}{=}\{\G_{t,s}(y,x)\}_{x,y\in\bT^d},
\]
from which we conclude that 
\[
\{\rhof(t,x;s,\nu)\}_{x\in\bT^d}\stackrel{\text{law}}{=}\{\rhob(t,\nu;s,x)\}_{x\in\bT^d}.
\]

\textbf{A semi-martingale decomposition.} To see how the $\rho$ process arises in the study of $\log Z$, we consider the following stochastic calculus trick that has been used in the study of spin glass models  \cite{CN95}. Define 
\begin{equation}\label{e.defbarZ}
\bar{Z}_t=\int_{\bT^d} Z(t,x)dx,
\end{equation}
which can be viewed as the partition function of the point-to-line
directed polymer. We first note that the projective process $\rho$ can
be written as $\rho=Z/\bar{Z}$.
Suppose that $q_t(x)$ is the heat kernel on $\bT^d$, i.e.
  \begin{equation}\label{e.defptx}
q_t(x)=\frac{1}{(2\pi t)^{d/2}}\sum_{n\in\bbZ^d}\exp\left\{-\frac{|x+n|^2}{2t}\right\}.
\end{equation} 
From the mild formulation of the SHE,
\[
Z(t,x)=q_t\star Z_0(x)+\beta \int_0^t\int_{\bT^d}q_{t-s}(x-y)Z(s,y)\xi(s,y)dyds,
\]
we can show that $\bar{Z}_t$ is a continuous positive martingale, given
by:
\begin{equation}\label{e.barZeq}
\bar{Z}_t=\int_{\bT^d} Z(t,x)dx=\int_{\bT^d} Z_0(x)dx+\beta\int_0^t \int_{\bT^d} Z(s,y)\xi(s,y)dyds.
\end{equation}
Now define $M$ through $M_0=0$ and the stochastic differential 
\begin{equation}\label{e.defM}
dM_t=\bar{Z}_t^{-1} d\bar{Z}_t.
\end{equation}
 One can show that $M$ is a martingale and, using the It\^o formula, it
can be concluded that
\begin{equation}
  \label{010608-24}
\log \bar{Z}_t=\log \bar{Z}_0+ M_t-\tfrac12\la M\ra_t.
\end{equation}
Thus,
the following representation of $\bar{Z}_t$ holds
\[
\bar{Z}_t=\bar{Z}_0 \exp(M_t-\tfrac12\la M\ra_t).
\]
From \eqref{010608-24} we also have a natural semi-martingale
decomposition of $\log \bar{Z}_t$, in terms of the projective process
$\rho=Z/\bar{Z}$, with
\begin{equation}\label{e.semilogZ}
\begin{aligned}
&M_t=\beta \int_0^t\int_{\bT^d}\rho(s,y)\xi(s,y)dyds,\\
& \la M\ra_t=\beta^2\int_0^t\int_{\bT^{2d}} \rho(s,y_1)\rho(s,y_2)R(y_1-y_2)dy_1 dy_1ds=:\beta^2\int_0^t \mathcal{R}(\rho(s))ds.
\end{aligned}
\end{equation}
Here the quadratic functional $\mathcal{R}$ is defined as 
\begin{equation}\label{e.defoverlap}
\mathcal{R}(f)=\int_{\bT^{2d}}f(x_1)f(x_2)R(x_1-x_2)dx_1dx_2.
\end{equation}
The functional is defined for 
$f\in L^1(\bT^d)$ in case $R(\cdot)$ is smooth and for $f\in
L^2(\bT)$, if $R=\delta$. Since $R(\cdot)$ is a covariance function, the functional is
non-negative. It can be viewed as a weighted  
semi-norm that measures the overlap of the two ``replicas''
of random polymers. To study the long time behavior of $\log
\bar{Z}_t$, one can start from its representation in \eqref{010608-24}
together with \eqref{e.semilogZ}, where the martingale $M$ is the
It\^o integral of $\rho$ with respect to $\xi$ and the drift term
$-\tfrac12\la M\ra_t$ is an additive functional of   $\rho$. To study
the long time behavior of the height function $\log Z(t,x)$, we can write it as 
\begin{equation}\label{e.logZtx}
\log Z(t,x)=\log \bar{Z}_t+\log \rho(t,x).
\end{equation}
In other words, we have expressed all the quantities of interest
in terms of the projective process $\rho$.

To study the {\em Lyapunov exponent}, i.e., the limit of
$\tfrac{1}{t}\log \bar{Z}_t$ as $t\gg1$, a standard concentration
argument (see, e.g., \cite[Section 2]{RT05}) shows that it suffices to
consider its expectation. Consequently, the main contribution comes
from the term $-\tfrac{1}{2t}\EE\la M\ra_t$. If the $\rho$ process
is asymptotically stable, i.e., $\rho(t)$ converges to some invariant
distribution as $t\gg1$, one may hope to express the Lyapunov exponent in terms of this invariant distribution. Therefore, studying the quantitative properties of the Lyapunov exponent reduces to analyzing the invariant measure of the projective process. For more details, see Section~\ref{s.lyapunov} below.

To study the random fluctuations of $\log \bar{Z}_t$, the first step is to demonstate  that the Markov process $\rho$ mixes sufficiently fast. Heuristically, the spectral gap of $\rho$ should be of order $L^{-3/2}$, indicating that, for fixed $L$, the process mixes exponentially fast. With this rapid mixing, it is natural to expect that the  random vector $(M_t, \la M\ra_t)$, after a centering and rescaling, converges in distribution to a two-dimensional (correlated) Gaussian. For the martingale $M_t$, one can apply the martingale central limit theorem; thus, the main challenge is analyzing the additive functional  $\la M\ra_t$. In Section~\ref{s.homogenization}, we will present a  homogenization type argument, through solving a corrector equation and performing a martingale decomposition. 

To summarize, the projective process $\rho$ naturally arises  in the study of $\log Z$. In the periodic setting, it can be interpreted as the endpoint density of the directed polymer on a cylinder, and its rapid relaxation to equilibrium is expected. Throughout our studies of the periodic KPZ, the process $\rho$ plays a central role. In nearly all our calculations, we can approximate it by its stationary version due to this rapid relaxation, a topic we discuss in detail in the next section.

\subsection{Invariant measure and exponential mixing}
\label{s.sinai}
We   prove the exponential mixing of the Markov process $\rho$, presented in the form of the following one-force-one-solution principle:
\begin{theorem}\label{t.ofos}
The Markov process $\{\rhof(t,\cdot;0,\nu)\}_{t\geq0}$, taking values
in ${\cal M}_1(\bT^d)$, has a unique invariant  probability measure, denoted by $\pi_\infty$. In the case of a $1+1$ space-time white noise, the invariant measure is  given by
the law of the ${\cal M}_1(\bT)$-valued random variable $\varrho(y)dy$,
where
\begin{equation}
   \label{rho-i}
 \varrho(y)=\frac{e^{\beta W(y)}}{\int_{\bT}e^{\beta W(y')}dy'},\quad y\in\bT,
 \end{equation}
 and $W$ is a standard Brownian bridge connecting
 $(0,0)$ and $(L,0)$, i.e.  a continuous
   trajectory, zero mean Gaussian
   process with the covariance function
   $$
\EE[W(x)W(y)]=\min(x,y)-\frac{xy}{L},\quad x,y\in[0,L].
 $$

 Furthermore, for any $p\in[1,\infty)$, there exist $C,\lambda>0$, depending on $R(\cdot),d,\beta,L$, such that for all $t\geq1$, 
\begin{equation}
  \label{051705-23}
\EE\Big[ \sup_{\nu,\nu'\in\mathcal{M}_1(\bT^d)} \sup_{x\in\bT^d} |\rhof(t,x;0,\nu)-\rhof(t,x;0,\nu')|^p  \Big]\leq Ce^{-\lambda t}, 
\end{equation}
and
\begin{equation}\label{e.mmbdrho}
\EE \Big[\sup_{\nu\in \mathcal{M}_1(\bT^d)}\sup_{x\in\bT^d}\, \{\rhof(t,x;0,\nu)^p
+\rhof(t,x;0,\nu)^{-p}\}\Big] \leq C.
\end{equation} 
\end{theorem}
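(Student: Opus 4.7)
The plan rests on three pillars: (a) two-sided moment control on the random Green's function $\G_{t,s}(x,y)$ over unit time increments; (b) a Birkhoff--Hopf--type projective contraction for random positive kernels, iterated across independent time slabs; and (c), in the white-noise case, identification of the unique invariant measure with the stationary Brownian-bridge law of the periodic KPZ equation.

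\textbf{Moment bounds.} First I would establish positive and negative moments for $\G_{1,0}(x,y)$, uniform in $x,y\in\bT^d$. In the smooth-$R$ case this follows from the Feynman--Kac formula via Gaussian concentration for $\log \G_{1,0}$, together with a Jensen lower bound based on the density of the periodic Brownian motion. In the white-noise case I would rely on Malliavin-calculus negative-moment estimates available for the SHE. Since
\[
\rhof(t,x;0,\nu)=\frac{\int \G_{t,0}(x,y)\,\nu(dy)}{\int \G_{t,0}(x',y)\,\nu(dy)\,dx'},
\]
these moment bounds translate directly into \eqref{e.mmbdrho}, using H\"older to separate numerator and denominator.

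\textbf{Contraction.} The heart of the proof is the exponential mixing \eqref{051705-23}. Fix $t_\ast=1$ and consider the random oscillation quotient $\kappa(\omega):=\sup_{x,y}\G_{1,0}(x,y)/\inf_{x,y}\G_{1,0}(x,y)$, which by step one has finite moments of all orders and, crucially, $\EE\log\kappa<\infty$ with $\kappa>1$ only by a controlled amount. By the Birkhoff--Hopf theorem the map $\nu\mapsto \rhof(1,\cdot;0,\nu)$ contracts the Hilbert projective metric on $\mathcal{M}_1(\bT^d)$ by the factor $\tanh(\tfrac14\log\kappa(\omega))<1$ a.s., and this transfers to the $L^\infty$ norm of $\rhof(1,\cdot;0,\nu)-\rhof(1,\cdot;0,\nu')$ up to multiplicative factors controlled by moments of $\kappa$ (using \eqref{e.mmbdrho} to upgrade projective contraction to pointwise contraction of normalized densities). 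Since the flows on disjoint unit intervals are i.i.d., a multiplicative ergodic/cocycle argument, or a direct $L^p$ chaining over the integer times, yields \eqref{051705-23} with an explicit exponential rate $\lambda=\lambda(R,d,\beta,L)>0$.

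\textbf{Invariant measure and bridge identification.} Uniqueness of $\pi_\infty$ is immediate from the contraction; existence is built by the one-force-one-solution recipe: extend $\xi$ to a two-sided stationary noise on $\R\times\bT^d$ and use \eqref{051705-23} to show that $\{\rhof(0,\cdot;-n,\nu_0)\}_{n\ge 1}$ is Cauchy in $L^p(\Omega;C(\bT^d))$ with a limit independent of $\nu_0$; define $\pi_\infty$ as the law of this limit, and check by stationarity of $\xi$ that it is invariant. For the $d=1$ white-noise case I would invoke the Bertini--Giacomin and Funaki--Quastel result \cite{BG97,FQ15} that, modulo additive constants, the stationary spatial profile of the periodic KPZ height is a standard Brownian bridge $\beta W(\cdot)$. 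Because the map $h\mapsto e^h/\int e^h$ is invariant under additive shifts, the induced law on $\mathcal{M}_1(\bT)$ is precisely that of $\varrho(y)\,dy$ from \eqref{rho-i}, and uniqueness of $\pi_\infty$ forces equality.

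The chief obstacle I anticipate is extracting the quantitative two-sided bounds on $\G_{1,0}$ in the white-noise regime: the kernel is singular at short times, positivity is accessed only through its chaos/Malliavin representation, and the Birkhoff--Hopf rate depends sensitively on both the upper tail and the lower tail. Consequently the rate $\lambda$ must be obtained via stochastic-analysis inputs rather than any deterministic parabolic Harnack argument, and the dependence of $\lambda$ on $L$ will be the most delicate bookkeeping in the proof.
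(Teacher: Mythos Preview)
Your proposal is correct and takes a genuinely different, though closely related, route from the paper. The paper does not invoke the Birkhoff--Hopf projective contraction. Following Sinai \cite{Sin91}, it instead constructs an auxiliary backward-in-time Markov chain $\{Y_k\}$ on $\bT^d$, with transition kernels $\pi_k(y_k\mid y_{k+1})$ built from ratios of the Green's functions $\G_{t-k+1,t-k}$ and $\G_{t-k,t-k-1}$, and rewrites $\rho(t,x)$ as a functional of $Y_1$ alone so that all dependence on the initial data enters only through the law of $Y_1$. It then proves a \emph{randomized Doeblin condition}: there exist $\delta>0$ and events $B_{t-k+1,t-k-1}(\delta)$, each of probability $>\delta$ and measurable with respect to the noise in a two-unit slab, on which $\inf_{y_k,y_{k+1}}\pi_k(y_k\mid y_{k+1})>\delta$. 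A renewal/coupling argument on these events gives exponential forgetting of the chain's initial law, and hence \eqref{051705-23}.

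Both approaches rest on the same analytic input---two-sided moment bounds on $\sup_{x,y}\G_{1,0}(x,y)$ and $\inf_{x,y}\G_{1,0}(x,y)$---and the Doeblin lower bound is essentially the pointwise analogue of your finite-projective-diameter condition $\kappa<\infty$. Your Birkhoff--Hopf route is arguably more streamlined for the contraction estimate itself; the paper's construction has the advantage of making the polymer's discrete-time structure on the cylinder explicit, a viewpoint that reappears in the winding-number analysis of Section~\ref{s.winding}. One point worth making explicit in your argument: arbitrary $\nu,\nu'\in\mathcal{M}_1(\bT^d)$ may lie at infinite Hilbert distance, so the iteration should begin only after one application of the kernel maps both into the image cone of projective diameter at most $\log\kappa$; you use this implicitly when bounding the initial distance.
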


Before discussing the proof of the above result, we make a few   remarks:

(i) In the compact setting, the study of the invariant measure of the
projective process $\rho$ is equivalent to studying   an analogous
problem for the KPZ equation. Consider the solution $Z$ to the SHE \eqref{e.she} with measure-valued initial data $\nu$, and the KPZ solution $h=\log Z$. We have the following  identities 
\begin{align}
  &h(t,x)-h(t,0)=\log \rhof(t,x;0,\nu)-\log \rhof(t,0;0,\nu),
  \label{020608-24a}\\
&\rhof(t,x;0,\nu)=e^{h(t,x)-h(t,0)}/\int_{\bT^d}
e^{h(t,x')-h(t,0)}dx'.
\label{020608-24b}
\end{align}
In other words,   $\rhof(t,\cdot;0,\nu)$ and $h(t,\cdot)-h(t,0)$ can
be expressed in terms of each other. Thus, Theorem~\ref{t.ofos}
implies the uniqueness of the invariant measure for the periodic KPZ
equation, as discussed in \cite{HM18,Ros21}. For the open KPZ equation
with the Neumann   boundary condition, a similar argument yields the
same result \cite{Pa22}, see also \cite{KM22}.  The compactness of the
domain is crucial, since the analogue of the process appearing on the
right hand side of  \eqref{020608-24b}
may degenerate for the equation on the whole
real line. Then, the law of the two-sided Brownian motion serves as an
invariant measure, but in this instance, $\int_{\bbR} e^{h(t,x')-h(t,0)}dx'$ equals infinity.

(ii) The estimate \eqref{051705-23} can be viewed as a quantitative
one-force-one-solution principle, sometimes referred to as
synchronization in the study of random dynamical systems. It  implies
both the existence and uniqueness of the invariant measure. More
specifically,  consider $\rhof$ at time $t$, with some initial value
at time $s<t$. As we pull back the initial time $s\to-\infty$, the random element
$\rhof(t,\cdot;s,\nu)$ converges strongly to a limit $\rhof(t,\cdot)$, which does not
depend on the initial data $\nu$ and is a measurable w.r.t.  $\xi(s',\cdot)$, $s'\le t$. To see it through a Cauchy sequence argument, we just note that there exist $C,\lambda>0$, as in Theorem
\ref{t.ofos}, such that:
\[
\EE\Big[ \sup_{\nu,\nu'\in\mathcal{M}_1(\bT^d)} \sup_{x\in\bT^d} |\rhof(t,x;s,\nu)-\rhof(t,x;s',\nu')|^p \Big] \leq Ce^{-\lambda (t-s)}, \quad\quad t>s\geq s'.
\]
Obviously, the law of the limit $\rhof(t,\cdot)$   is given by $\pi_\infty$. 

(iii) The way we use Theorem~\ref{t.ofos} is  as
follows.  Suppose that
$\nu\in{\cal M}_1(\bT^d)$ is fixed. In \eqref{051705-23}, if we choose $\nu'$ as a random probability measure sampled from the invariant measure $\pi_\infty$ and independent of $\xi$, then $\rhof(t,\cdot;0,\nu')$ has the same law as $\pi_\infty$. The key estimate \eqref{051705-23} allows us to approximate the endpoint density $\rhof(t,\cdot;0,\nu)$  by its stationary version $\rhof(t,\cdot;0,\nu')$, with an error that is exponentially small in $t$.

\medskip

\textbf{Sketch of the proof of Theorem~\ref{t.ofos}.} The proof was
inspired by Sinai's  work \cite{Sin91} on the Burgers equation driven
by a smooth random periodic forcing. Sinai studied the object
$u=\nabla h=\nabla Z/Z$.
In comparison, in our case it is the
projective process
$\rho=Z/\|Z\|_{L^1(\bT^d)}$.
For  both of these processes it is
possible
to remove    a large random constant that is common for   both the
numerator and  
denominator  and represent each process as a ratio of  processes that
can be approximated by random elements    independent of the initial
data (measurable only w.r.t. the noise). This leads to
stabilization in large time as described in Theorem \ref{t.ofos}. Using $\rho$ rather than $u$
 avoids dealing with $\nabla Z$, which is not function-valued in the singular case. For different approaches we refer to \cite{GP20, Ros21}.
 
Sinai's approach involves first a construction of a Markov chain as follows. Fix $t>1$ and for simplicity assume $t=N\in \Z_+$, we have
\begin{equation}\label{e.3111}
Z(t,x)=\int_{\bT^d} \G_{t,t-1}(x,y_1) Z(t-1,y_1) dy_1.
\end{equation}
Iterating the above formula, we derive 
\begin{equation}\label{e.31121}
\begin{aligned}
Z(t,x)=  \int_{\bT^{Nd}} \prod_{j=0}^{N-1} \mathcal{G}_{t-j, {t-j-1}}(y_j ,y_{j+1} ) Z_0(y_N) dy_1\ldots dy_{N-1}dy_N,
\end{aligned}
\end{equation}
where $y_0=x$ and $Z_0$ is the initial data for \eqref{e.she}.   The Markov chain $\{Y_n\}_{n=1}^N$, which takes values in $\bT^d$, is constructed as follows. We run the chain backward in time. Let $\pi_N(y_N)$ be the density of $Y_N$ and $\pi_k(y_k\,|\,y_{k+1})$ be the transition density from $Y_{k+1}=y_{k+1}$ to $Y_k=y_k$:
\begin{equation}\label{e.trandensity}
\begin{aligned}
&\pi_1(y_1\,|\,y_2)=\frac{\G_{t-1,t-2}(y_1,y_2)}{\int_{\bT^d} \G_{t-1,t-2}(y_1,y_2)dy_1},\\
&\pi_k(y_k\,|\,y_{k+1})=\frac{\int_{\bT^{(k-1)d}} \G_{t-1,t-2}(y_1,y_2)\ldots \G_{t-k,t-k-1}(y_k,y_{k+1})dy_1\ldots dy_{k-1}}{\int_{\bT^{kd}} \G_{t-1,t-2}(y_1,y_2)\ldots \G_{t-k,t-k-1}(y_k,y_{k+1})dy_1\ldots dy_{k-1}dy_k},\quad\quad k\leq N-1.
\end{aligned}
\end{equation}
and 
\[
\pi_N(y_N)=\frac{\int_{\bT^{(N-1)d}}\left(\prod_{j=1}^{N-1} \mathcal{G}_{t-j,t-j-1}(y_j ,y_{j+1} ) \right)Z_0(y_N )dy_1\ldots dy_{N-1}}{\int_{\bT^{Nd}}\left(\prod_{j=1}^{N-1} \mathcal{G}_{t-j,t-j-1}(y_j ,y_{j+1} ) \right)Z_0(y_N )dy_1\ldots dy_{N}}.
\]
We first note that the joint density of $Y_1,\ldots,Y_N$ is 
\[
\pi_1(y_1\,|\,y_2)\ldots \pi_{N-1}(y_{N-1}\,|\,y_N)\pi_N(y_N)=A\left(\prod_{j=1}^{N-1} \mathcal{G}_{t-j,t-j-1}(y_j ,y_{j+1} ) \right)Z_0(y_N ),
\] 
with the (random)  normalizing constant $A$ given by 
\[
A=\left(\int_{\bT^{Nd}}\left(\prod_{j=1}^{N-1} \mathcal{G}_{t-j,t-j-1}(y_j ,y_{j+1} ) \right)Z_0(y_N )dy_1\ldots dy_{N}\right)^{-1}.
\]
Let $\E_\pi$ denote the expectation of the Markov chain, then \eqref{e.31121} can be written as
\begin{equation}\label{e.UY1}
\begin{aligned}
Z(t,x)=&\int_{\bT^{Nd}}\G_{t,t-1}(x,y_1)\pi_1(y_1\,|\,y_2)\ldots \pi_{N-1}(y_{N-1}\,|\,y_N)\pi_N(y_N)A^{-1}dy_1\ldots dy_N\\
=&A^{-1}\E_\pi[\G_{t,t-1}(x,Y_1)],
\end{aligned}
\end{equation}
thus,
\begin{equation}\label{e.densityY1}
\rho(t,x)=\frac{Z(t,x)}{\int_{\bT^d}Z(t,x')dx'}=\frac{\E_\pi[\G_{t,t-1}(x,Y_1)]}{\int_{\bT^d}\E_\pi[\G_{t,t-1}(x',Y_1)]dx'}.
\end{equation}
As already mentioned, the key here is that the random constant $A$ disappears when we take the ratio. The same applies to the Burgers equation:
\[
u(t,x)=\frac{\nabla Z(t,x)}{Z(t,x)}=\frac{\E_\pi[\nabla_x\G_{t,t-1}(x,Y_1)]}{\E_\pi[\G_{t,t-1}(x,Y_1)]}.
\] 
The dependence of $\rho(t,x)$ on the random environment $\{\xi(s,\cdot):s\leq t-1\}$ is only through the random variable $Y_1$, therefore, to study the mixing property of $\rho$,  it suffices to study the mixing property of the Markov chain. 

We note that 
\[
\begin{aligned}
&\inf_{y_k,y_{k+1}\in\bT^d}\pi_k(y_k\,|\,y_{k+1})\\
&\geq  \frac{\inf_{y_{k-1},y_k,y_{k+1}\in\bT^d}\G_{t-k+1,t-k}(y_{k-1},y_k)\G_{t-k,t-k-1}(y_k,y_{k-1}) }{\sup_{y_{k-1},y_{k+1}\in\bT^d}\int_{\bT^d}\G_{t-k+1,t-k}(y_{k-1},y_k)\G_{t-k,t-k-1}(y_k,y_{k+1})dy_k  },
\end{aligned}
\] 
and the r.h.s. only depends on the noise in the time interval $[t-k-1,t-k+1]$. For any $s$, the random variable $\G_{s,s-1}(x,y)$ is roughly the periodic heat kernel $q_1(x-y)$ multiplying a random factor that can be statistically bounded away from zero and infinity. Since $q_1(x-y)$ is also bounded away from zero and infinity, which relies on the fact that the spatial domain is compact, it is not hard to imagine that, with a positive probability, the transition density $\pi_k$ can be bounded from below by a positive constant. More precisely, one can show that, there exist a $\delta>0$ depending on $d, R(\cdot),L$ and 
  events $B_{t-k+1,t-k-1}(\delta)$ only involving the random environment $\{\xi(s,\cdot): s\in[t-k-1,t-k+1]\}$  such that 
\begin{equation}\label{e.doeblin1}
\PP[B_{t-k+1,t-k-1}(\delta)]>\delta,
\end{equation}
 and on the event $B_{t-k+1,t-k-1}(\delta)$ the transition
probability kernel satisfies
\begin{equation}\label{e.doeblin2}
\inf_{y_k,y_{k+1}\in\bT^d}\pi_k(y_k\,|\,y_{k+1})>\delta.
\end{equation}
For precise statements, we refer to \cite[Section 4.2 and 4.3]{GK21}.

Since the Markov chain has a random transition kernel, one can view
\eqref{e.doeblin1} and \eqref{e.doeblin2} as a randomized Doeblin
condition. Then it is rather standard to  construct a coupling to show
that the Markov chain mixes  exponentially fast. For any $t\gg1$, we consider the subintervals of of the form $[t-k-1,t-k+1]$ in $[0,t]$, and if one of those events $B_{t-k+1,t-k-1}(\delta)$ occurs, then the transition kernel $\pi_k$ is bounded from below by $\delta$, so we can write 
\[
\pi_k(y_k\,|\,y_{k+1})=\delta+(1-\delta)\tfrac{\pi_k(y_k\,|\,y_{k+1})-\delta}{1-\delta}.
\]
Let us introduce a sequence of  i.i.d. Bernoulli random variables $(\tau_k)$, with
parameter $\delta$, that is independent of the noise $\xi$. Given
$Y_{k+1}=y_{k+1}$, to sample from $\pi_k(y_k\,|\,y_{k+1})$, it is
equivalent with sampling from the uniform distribution if $\tau_k=1$
and from $\tfrac{\pi_k(y_k\,|\,y_{k+1})-\delta}{1-\delta}$ if
$\tau_k=0$. Thus, with probability $\delta$, the distribution of $Y_k$
given $Y_{k+1}=y_{k+1}$ is uniform, which renews the chain. We can also   see that the probability of not renewing is exponentially small in $t$. This leads to the fast mixing of the $\rho$ process. Modulo technical details, it is almost a proof of \eqref{051705-23}. For a complete proof, we refer to \cite[Section 4.4 and 4.5]{GK21} and the proof of \cite[Proposition 4.7]{GK21}.

\begin{remark}
  For a similar equation with no comparison principle (with $-\Delta$ replaced by $(-\Delta)^\alpha$ for appropriate $\alpha$),  results similar to Theorem~\ref{t.ofos} were derived in \cite{HR23}, using a different approach.
\end{remark}

\subsection{A nonlocal nonlinear SPDE, a PDE hierarchy and the Lyapunov exponent}

In this section, we summarize several other known results on the
projective process $\rho$, including: (i) a nonlocal, nonlinear SPDE
that it satisfies, at least in the case of smooth noise; (ii) a PDE
hierarchy satisfied by its multi-point covariance function; and (iii)
the study of the Lyapunov exponent through the invariant measure of
the process.

 \subsubsection{A reaction-diffusion SPDE} Since $\rho$ is a Markov
 process derived from the solution to the SPDE \eqref{e.she}, it is
 natural to inquire if $\rho$ itself solves some SPDE. Recall that
 $\rho=Z/\bar{Z}$, where $Z$ solves the SHE and
 $\bar{Z}_t=\int_{\bT^d} Z(t,x)dx$ is a positive martingale satisfying
 \eqref{e.barZeq}. One can formally apply the It\^o formula, which
 (remembering that $\int_{\bT^d}\rho(t,x')dx'=1$) yields  the following  SPDE:
\begin{equation}\label{e.spderho}
\begin{aligned}
\partial_t \rho(t,x)=&\tfrac12\Delta \rho(t,x) +\beta^2\rho(t,x)\int_{\bT^d} \rho(t,x')\Big[ R\star \rho(t,x')-R\star \rho(t,x)\Big]dx'\\
&+\beta \rho(t,x)\int_{\bT^d}\rho(t,x')[\xi(t,x)-\xi(t,x')]dx', \quad\quad t>0, x\in\bT^d.
\end{aligned}
\end{equation}
With smooth initial data and $R\in C^\infty(\bT^d)$, it was shown in
\cite[Proposition 3.2]{GK21} that $\rho=Z/\bar{Z}$ is indeed a strong
solution to \eqref{e.spderho}. For $R(\cdot)=\delta(\cdot)$, we do not
expect $\rho$ to be a strong solution -- after all, $Z$ solves
\eqref{e.she} only in the mild sense -- however, we expect that, with
some extra work, one can show that $\rho$ is the unique mild solution to \eqref{e.spderho}.

The SPDE \eqref{e.spderho} is of reaction-diffusion type and includes
a nonlocal reaction term. It can be shown that, at least formally,
this SPDE preserves a density: if $\rho(0,\cdot)$ is a probability
density on $\bT^d$, then for any $t>0$,   $\int_{\bT^d}
\rho(t,x)dx=1$.  The nonlinear part of the drift, which in the case
of space-time white noise equals
$$
\beta^2[\rho(t,x)\|\rho(t,\cdot)\|_{L^2(\bT)}^2-\rho(t,x)^2],
$$
measures how localized the density is. In other words, the evolution
of the endpoint density is driven by the localization of the density
itself, which we find intriguing. The equation \eqref{e.spderho} also
applies to $\R^d$, raising the question of whether one can study
properties of the endpoint density, such as the speed of spreading and
localization behaviors, directly through the equation rather than via
the formula  $\rho=Z/\bar Z$ for the projective process.

\begin{remark}
If we consider the equation \eqref{e.spderho} on the whole space and only keep   the drift part, it becomes deterministic and takes the form 
\begin{equation}\label{e.detereqrho}
\partial_t \rho(t,x)=\tfrac12\Delta \rho(t,x) +\beta^2\rho(t,x)\int_{\bT^d} \rho(t,x')\Big[ R\star \rho(t,x')-R\star \rho(t,x)\Big]dx'.
\end{equation}
Starting from a probability density, $\rho(t,\cdot)$ remains a
probability density for all $t>0$. In $d=1$ and for
$R(\cdot)=\delta(\cdot)$ or $R\in C_c^\infty(\R)$, it was shown in
\cite{GH21,GH22} that the mass spreads super-diffusively with the
exponent $2/3$.  More precisely,  with compactly supported initial data, for any $p\geq1$:
\[
\int_{\R} |x|^p \rho(t,x)dx\asymp t^{2p/3}, \quad\quad\mbox{ for } t\gg1.
\]
The symbol $\asymp$ means that the ratio of both sides stays bounded
away from $0$ and $\infty$ for $t\gg1$.
It is unclear to us if there is any relation between
\eqref{e.detereqrho} and \eqref{e.spderho},   or if the obtained exponent $2/3$ above is merely a coincidence.
\end{remark}

\subsubsection{A PDE hierarchy} From Theorem~\ref{t.ofos} and the
discussion in Section~\ref{s.sinai}, we know that the Markov process
$\rho$ has a unique invariant measure $\pi_\infty$, which is closely
related to the invariant measure of the KPZ equation. In the white
noise case where $R(\cdot)=\delta(\cdot)$, the invariant measure is
explicit, expressed in terms of a Brownian bridge  in \eqref{rho-i},
 see \cite[Theorems 1.1 and 2.1]{FQ15}. Unfortunately, this is probably the only case in which we have any concrete knowledge about the invariant measure. As   will become clear later in Section~\ref{s.clarkocone} below, having a good understanding of $\pi_\infty$ is crucial for studying the universal behaviors of growth models in a cylinder geometry. This remains a major challenge in the study of the KPZ universality class: for non-solvable models, there is essentially no knowledge of the underlying invariant measure. 

Suppose we take expectation on both sides of \eqref{e.spderho}, the martingale terms disappear,  and, because of the nonlinearity,  the evolution of $\EE \rho(t,x)$ is related to the two and three point covariance functions. This inspires us to define the $n-$point covariance function of $\rho$: 
\begin{equation}\label{e.defQn}
Q_n(t,\mathbf{x}_{1:n})=\EE \Big[\rho(t,x_1)\ldots \rho(t,x_n)\Big], \quad\quad \mbox{ with } \mathbf{x}_{1:n}=(x_1,\ldots,x_n).
\end{equation}
It was shown in \cite[Theorem 1.1]{GH21} that $\{Q_n\}_{n\geq1}$ solves a PDE hierarchy of the form \footnote{The result in \cite{GH21} was for the equation posed on the whole space; the same proof applies to $\bT^d$.}
\begin{equation}\label{e.hierarchy}
\begin{split}
\partial_tQ_n&(t,\mathbf{x}_{1:n})=\tfrac12\Delta Q_n(t,\x_{1:n})+\beta^2Q_n(t,\x_{1:n})\textstyle\sum_{1\leq i<j\leq n}R(x_i-x_j)\\
&-\beta^2n\textstyle\int_{\bT^d}Q_{n+1}(t,\x_{1:n},x_{n+1})\sum_{i=1}^nR(x_i-x_{n+1})dx_{n+1}\\
&+\beta^2\tfrac{n(n+1)}{2}\textstyle\int_{\bT^{2d}} Q_{n+2}(t,\x_{1:n},x_{n+1},x_{n+2})R(x_{n+1}-x_{n+2})dx_{n+1}dx_{n+2}.
\end{split}
\end{equation}
Here
$\Delta =\sum_{j=1}^n\Delta_{x_j}$.
For $n=1$, one can check the above equation by directly taking expectation on both sides of \eqref{e.spderho}. For a slightly different model, a similar hierarchy was derived in \cite[Theorem 3.1]{CH06}, which was used to study localization behaviors of the polymer measure.

In the compact setting, one can utilize \eqref{e.hierarchy} to study properties of $\pi_\infty$. Since $\rho(t,\cdot)$ converges in law to $\pi_\infty$ as $t\to\infty$, it is natural to expect that $Q_n(t,\cdot)$ defined in \eqref{e.defQn} also converges as $t\to\infty$, with the limit being the $n-$point covariance function of $\pi_\infty$, which solves the stationary version of \eqref{e.hierarchy}. Indeed, we have the following result  \cite[Proposition 2.2]{GK211}
\begin{proposition}
Assume  $\varrho$ is sampled from $\pi_\infty$, and define 
\begin{equation}\label{e.defcalQ}
\mathcal{Q}_n(\bx_{1:n})=\mathcal{Q}_n(x_1,\ldots,x_n)=\EE\Big[\varrho(x_1)\ldots\varrho(x_n)\Big],
\end{equation}
then $\{\mathcal{Q}_n\}_{n\geq1}$ is a weak solution to
\begin{equation}\label{e.hierarchystationary}
\begin{split}
\tfrac12\Delta \mathcal{Q}_n&(\x_{1:n})+\beta^2\mathcal{Q}_n(\x_{1:n})\textstyle\sum_{1\leq i<j\leq n}R(x_i-x_j)\\
&=\beta^2n\textstyle\int_{\bT^d}\mathcal{Q}_{n+1}(\x_{1:n},x_{n+1})\sum_{i=1}^nR(x_i-x_{n+1})dx_{n+1}\\
&-\beta^2\tfrac{n(n+1)}{2}\textstyle\int_{\bT^{2d}} \mathcal{Q}_{n+2}(\x_{1:n},x_{n+1},x_{n+2})R(x_{n+1}-x_{n+2})dx_{n+1}dx_{n+2}.
\end{split}
\end{equation}
For $R\in C^\infty(\bT^d)$, $\{\mathcal{Q}_n\}_{n\geq1}$ are smooth functions and a classical solution to \eqref{e.hierarchystationary}.
\end{proposition}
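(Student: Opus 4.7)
The plan is to derive the stationary hierarchy \eqref{e.hierarchystationary} by initializing $\rho$ from the invariant measure $\pi_\infty$ and using it in the time-dependent hierarchy \eqref{e.hierarchy}.

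Step one. Draw $\varrho\sim\pi_\infty$ independently of $\xi$ and run the process $\rho$ with initial datum $\rho(0,\cdot)=\varrho$. By Theorem~\ref{t.ofos}, the law of $\rho(t,\cdot)$ equals $\pi_\infty$ for every $t\geq 0$, so the time-dependent $n$-point correlation $Q_n(t,\x_{1:n})=\EE[\rho(t,x_1)\cdots\rho(t,x_n)]$ introduced in \eqref{e.defQn} is independent of $t$ and equals $\mathcal{Q}_n(\x_{1:n})$ of \eqref{e.defcalQ}. In particular, the moment bound \eqref{e.mmbdrho} applied to the stationary process shows $\mathcal{Q}_n\in L^\infty(\bT^{nd})$ for every $n$.

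Step two. Test \eqref{e.hierarchy} against an arbitrary $\varphi\in C^\infty(\bT^{nd})$ and integrate the Laplacian by parts. The boundedness of $\mathcal{Q}_n,\mathcal{Q}_{n+1},\mathcal{Q}_{n+2}$ makes every term absolutely integrable, both when $R\in C^\infty(\bT^d)$ and when $R=\delta$ (the distributional pairings $R(x_i-x_j)$ only collapse one coordinate and act on an essentially bounded function). Since $Q_n\equiv\mathcal{Q}_n$ has no $t$-dependence, the left-hand side vanishes and one reads off the weak form of \eqref{e.hierarchystationary}:
\[
\int_{\bT^{nd}}\mathcal{Q}_n\cdot\tfrac12\Delta\varphi\,d\x + \beta^2\!\!\int_{\bT^{nd}}\!\!\!\mathcal{Q}_n\,\varphi\!\sum_{1\le i<j\le n}\!\!R(x_i-x_j)d\x = \langle G_n,\varphi\rangle,
\]
where $G_n$ denotes the source term on the right-hand side of \eqref{e.hierarchystationary}.

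Step three. Under the hypothesis $R\in C^\infty(\bT^d)$, upgrade to a classical solution by an elliptic bootstrap. The source term
\[
G_n(\x)=\beta^2 n\!\!\int_{\bT^d}\!\!\mathcal{Q}_{n+1}(\x,x_{n+1})\sum_{i=1}^n R(x_i-x_{n+1})dx_{n+1}-\beta^2\tfrac{n(n+1)}{2}\!\!\int_{\bT^{2d}}\!\!\mathcal{Q}_{n+2}(\x,x_{n+1},x_{n+2})R(x_{n+1}-x_{n+2})dx_{n+1}dx_{n+2}
\]
is a convolution of bounded functions against smooth kernels, hence lies in $C^\infty(\bT^{nd})$; the potential $V_n(\x)=\beta^2\sum_{i<j}R(x_i-x_j)$ is likewise smooth. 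Thus $\mathcal{Q}_n\in L^\infty(\bT^{nd})$ is a weak solution of $-\tfrac12\Delta\mathcal{Q}_n+V_n\mathcal{Q}_n=G_n$ with smooth data on the compact manifold $\bT^{nd}$. Standard elliptic regularity (local Sobolev/Schauder estimates on a torus) then iteratively yields $\mathcal{Q}_n\in C^\infty(\bT^{nd})$ and promotes the identity to a pointwise one.

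The main obstacle is really Step two: rigorously establishing \eqref{e.hierarchy} as a weak equation with enough regularity in time to take $\partial_t=0$ pointwise in $t$. This is the content of \cite[Theorem 1.1]{GH21}, and once invoked the stationary reduction is immediate. The delicate point specific to the white-noise case ($R=\delta$) is that the term $\mathcal{Q}_n\sum_{i<j}R(x_i-x_j)$ is a distribution supported on the pairwise diagonals, but since $\mathcal{Q}_n$ is essentially bounded and (by its definition as an expectation of continuous densities, cf. the definition of $D^\infty(\bT^d)$) has a continuous representative, the restriction to diagonals is well-defined and the derivation goes through without modification.
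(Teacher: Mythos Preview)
Your strategy matches the one the paper sketches (and defers in detail to \cite[Proposition~2.2]{GK211}): start the projective process at stationarity so that $Q_n(t,\cdot)\equiv\mathcal{Q}_n$, then read off \eqref{e.hierarchystationary} from the time-dependent hierarchy \eqref{e.hierarchy} by setting $\partial_t Q_n=0$. Steps one and two are fine.

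Step three has a small but genuine gap. Your claim that $G_n\in C^\infty(\bT^{nd})$ does not follow from boundedness of $\mathcal{Q}_{n+1},\mathcal{Q}_{n+2}$ alone. The second integral in $G_n$ is not a convolution in the $\x$-variables at all: integrating $\mathcal{Q}_{n+2}(\x,x_{n+1},x_{n+2})R(x_{n+1}-x_{n+2})$ over $(x_{n+1},x_{n+2})$ gives only a bounded function of $\x$ when $\mathcal{Q}_{n+2}$ is merely bounded. Likewise, the first integral is smooth in the coordinate $x_i$ where the convolution with $R$ acts, but not in the remaining coordinates. The fix is to run the bootstrap \emph{simultaneously for all $n$}: from $\mathcal{Q}_m\in L^\infty$ for every $m$ and the weak equation, elliptic regularity gives $\mathcal{Q}_m\in W^{2,p}$ for every $m,p$; differentiating under the integral sign then shows each $G_n\in W^{2,p}$, hence $\mathcal{Q}_n\in W^{4,p}$, and one iterates. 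With that correction the argument goes through.
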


The above hierarchy provides some partial
 information about the invariant measure
$\pi_\infty$. For example, one can perform an expansion of
$\mathcal{Q}_n$  in  small $\beta\ll1$, for any $n$, up to an arbitrary high order, see \cite[Section 2-3]{GK211}. This can be used to study the expansion of the Lyapunov exponent for $\beta\ll1$, see the discussion in the next section. 

If we instead consider the multi-point covariance function of SHE, 
\[
\mathscr{Q}_n(t,\bx_n):=\EE\Big[ Z(t,x_1)\ldots Z(t,x_n)\Big],
\] it solves the $n-$particle delta Bose gas:
\[
\partial_t\mathscr{Q}_n(t,\mathbf{x}_{1:n})=\tfrac12\Delta \mathscr{Q}_n(t,\x_{1:n})+\beta^2\mathscr{Q}_n(t,\x_{1:n})\textstyle\sum_{1\leq i<j\leq n}R(x_i-x_j).
\]
This is usually the starting point of the replica method,  based on studying the principle eigenvalue of the operator $\frac12\Delta+\sum_{i<j}R(x_i-x_j)$ and performing an analytic continuation to investigate the cumulants of $\log Z$. A related variational approach was proposed by Mezard-Parisi to study directed polymers, and it appears to be closely connected to the above PDE hierarchy (see \cite[Equation (2.20) and (3.2)]{MP92}).

\subsubsection{Lyapunov exponent}
\label{s.lyapunov}

Recall that one of our goals is to study the asymptotics of $\log Z(t,x)$ for large $t$. By \eqref{e.logZtx} and  the semi-martingale decomposition of $\log \bar{Z}_t$ in \eqref{e.semilogZ}, we have 
\[
\begin{aligned}
\EE \log Z(t,x)&= \EE \log \bar{Z}_t+\EE \log \rho(t,x)\\
&=-\tfrac12\beta^2\EE\int_0^t \mathcal{R}(\rho(s))ds +\EE \log \rho(t,x).
\end{aligned}
\]
Applying Theorem~\ref{t.ofos}, we conclude that 
\begin{equation}\label{e.congamma}
\begin{aligned}
\tfrac{1}{t} \EE \log Z(t,x)\to \gamma_L(\beta):&=-\tfrac12\beta^2 \int_{\bT^{2d}} R(x_1-x_2) \mathcal{Q}_2(x_1,x_2)dx_1dx_2\\
&=-\tfrac12\beta^2 \int_{\bT^{2d}} R(x_1-x_2) \EE
\Big[\varrho(x_1)\varrho(x_2)\Big]dx_1dx_2\\
&=-\tfrac12\beta^2 L^d\int_{\bT^{d}} R(x) \EE \Big[\varrho(x)\varrho(0)\Big]dx,
\end{aligned}
\end{equation}
where $\mathcal{Q}_2$ defined in \eqref{e.defcalQ} is the two-point
covariance function of $\pi_\infty$, and $\varrho$ is sampled from
$\pi_\infty$. The last equality follows from spatial homogeneity of
the random field $\big(\rho(x)\big)_{x\in\bT^d}$. With a bit more effort, one can show the almost sure convergence of $\tfrac{1}{t}\log Z(t,x)$ to $\gamma_L(\beta)$; see \cite{KKM23} for the convergence in a stronger topology.

The $\gamma_L(\beta)$ is the Lyapunov exponent, or the limiting   free energy of the directed polymer. It  measures the exponentially growing speed of $Z(t,x)$, and describes, in the thermodynamic limit, the extent to which two paths sampled independently from the polymer measure overlap with each other.

In the general case, it is impossible to obtain an explicit expression for $\gamma_L(\beta)$, so it is natural to investigate its high temperature ($\beta\ll1$) behavior. The problem has been considered in the whole space. Denote the limiting free energy by $\gamma_\infty(\beta)$ in this
case, it has been shown in \cite{berger2017high,lacoin2010new,nakashima2014remark,nakashima2019free}, for a large class of discrete models and as $\beta\to0$,
\begin{equation}\label{e.inf24}
\tfrac{1}{\beta^4} \gamma_\infty(\beta)\to -\tfrac{1}{24},  \quad\quad \mbox{ in } d=1,
\end{equation}
and
\begin{equation}\label{e.infpi}
\beta^2\log \gamma_\infty(\beta)\to -\pi, \quad\quad \mbox{ in } d=2.
\end{equation}
The limiting constants $-\frac{1}{24}$ and $-\pi$ are universal as they do not depend on the specific distributions of the underlying random environment. For SHE on the whole line with a $1+1$ spacetime white noise, the Lyapunov exponent was computed as $\gamma_\infty(\beta)=-\tfrac{1}{24}\beta^4$ (see \cite{ACQ11}). In the periodic setting, one can pose the same question, and we have the following result \cite[Theorem 1.1, Proposition 4.1]{GK211}
\begin{theorem}\label{t.gamma}
Fix $L>0$. (i) For the case of $R(\cdot)=\delta(\cdot)$ in $d=1$, we have 
\begin{equation}\label{e.gammawhite}
\gamma_L(\beta)=-\tfrac{1}{2L}\beta^2-\tfrac{1}{24}\beta^4.
\end{equation}
(ii) For the case of $R\in C^\infty(\bT^d)$, we have the small $\beta\ll1$ expansion:
\begin{equation}\label{e.gammaex}
\gamma_L(\beta)=\gamma_L^{(2)}\beta^2+\gamma_L^{(4)}\beta^4+O(\beta^6),
\end{equation}
with 
\[
\gamma_L^{(2)}=-\tfrac{1}{2L^d}, \quad\quad \gamma_L^{(4)}=-\tfrac{1}{8\pi^2 L^{2d-2}}\sum_{0\neq n\in \Z^d} \tfrac{1}{|n|^2}\hat{R}^2\left(\tfrac{n}{L}\right),
\]
where $\hat{R}(\xi)
  {=\int_{\bT^d}R(x)e^{-i2\pi \xi\cdot x}dx}$.
\end{theorem}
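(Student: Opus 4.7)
The plan is to start from representation~(\ref{e.congamma}) for the Lyapunov exponent, which by spatial translation invariance of $\pi_\infty$ takes the form
\[
\gamma_L(\beta) = -\tfrac{1}{2}\beta^2 L^d \int_{\bT^d} R(\xi)\, \mathcal{Q}_2(\xi)\, d\xi,
\]
with $\mathcal{Q}_2$ viewed as a function of $\xi = x_1 - x_2 \in \bT^d$. The two parts of the theorem call for distinct strategies.

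For (ii), I would solve the stationary hierarchy~(\ref{e.hierarchystationary}) perturbatively in $\beta^2$. Writing $\mathcal{Q}_n = L^{-nd} + \beta^2 c_n + O(\beta^4)$ — the leading uniform term being the unique invariant density at $\beta = 0$ — and substituting into the $n=2$ equation, using $\int R = 1$ together with the leading approximations for $\mathcal{Q}_3$ and $\mathcal{Q}_4$, the $O(\beta^2)$ balance reduces to the Poisson equation
\[
\Delta_\xi c_2(\xi) = \tfrac{1}{L^{3d}} - \tfrac{R(\xi)}{L^{2d}} \quad \text{on }\bT^d
\]
(using $\Delta c_2 = 2\Delta_\xi c_2$, since by translation invariance $c_2(x_1,x_2)$ depends only on $\xi$). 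The solvability condition is automatic. A Fourier-series solution yields the Fourier coefficients $\hat c_2(n) = \hat R(n/L)/(4\pi^2|n|^2 L^{3d-2})$ for $n\neq 0$; Parseval's identity applied to $\int R\, c_2\, d\xi$ then produces the claimed expression for $\gamma_L^{(4)}$. The $O(\beta^6)$ remainder is justified using the uniform $L^p$ bounds on $\varrho^{\pm 1}$ from~\eqref{e.mmbdrho}.

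For (i), the invariant measure is given explicitly by~\eqref{rho-i}, and since $W(0)=0$ one has $\varrho(0) = 1/A$ with $A = \int_0^L e^{\beta W(y)}dy$. Thus the theorem reduces to the exact identity
\[
\EE\bigl[A^{-2}\bigr] = \tfrac{1}{L^2} + \tfrac{\beta^2}{12 L}
\]
for all $\beta \geq 0$, where $W$ is a standard Brownian bridge on $[0,L]$. By Brownian scaling, this is equivalent to showing that $\alpha \mapsto \EE\bigl[\bigl(\int_0^1 e^{\alpha W(u)}du\bigr)^{-2}\bigr]$ (with $\alpha = \beta\sqrt{L}$) is exactly affine in $\alpha^2$ with slope $1/12$. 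A brute-force Wick expansion of $\EE[(1+x)^{-2}]$ with $x = A/L - 1$ does confirm the identity through $O(\alpha^4)$ via the cancellation
\[
-\tfrac{1}{120} + \tfrac{7}{80} - \tfrac{11}{60} + \tfrac{5}{48} = 0,
\]
coming from the moments $\EE[S_4]$, $\EE[S_1 S_3]$, $\EE[S_2^2]$, $\EE[S_1^2 S_2]$, $\EE[S_1^4]$ of the Brownian bridge on $[0,1]$, with $S_k = \int_0^1 W^k$; this already suggests a hidden algebraic structure at work.

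The main obstacle is establishing this identity to all orders in $\alpha$. The cleanest route would be to derive a first-order ODE in $\alpha^2$ for $f(\alpha) := \EE\bigl[\bigl(\int_0^1 e^{\alpha W}\bigr)^{-2}\bigr]$, e.g.\ $\tfrac{df}{d(\alpha^2)} \equiv 1/12$, via Gaussian integration-by-parts or Malliavin calculus exploiting the Cameron–Martin structure of the bridge. Alternatively, one may analyze $-\tfrac{\beta^2}{2}\EE[\mathcal{R}(\varrho)]$ directly via the semi-martingale decomposition~\eqref{e.semilogZ}, taking advantage of the renormalization features of KPZ on the torus with white noise that are absent in the smooth-$R$ case.
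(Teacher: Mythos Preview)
Your treatment of part~(ii) is essentially the same as the paper's: expand the stationary hierarchy~\eqref{e.hierarchystationary} perturbatively as $\mathcal{Q}_n = L^{-nd} + \beta^2\mathcal{Q}_{n,1} + \beta^4\mathcal{Q}_{n,2}+\dots$, solve for $\mathcal{Q}_{2,1}$ via a Poisson equation in the difference variable, and read off $\gamma_L^{(4)}$ from~\eqref{e.congamma}. The paper does not spell out the Fourier computation in the body but refers to~\cite{GK211}, and your sketch lines up with that.

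For part~(i), however, there is a genuine gap. You correctly reduce the statement to the exact identity
\[
\EE\Big[\Big(\int_0^1 e^{\lambda W'(u)}\,du\Big)^{-2}\Big]=1+\frac{\lambda^2}{12}\quad\text{for all }\lambda\ge 0,
\]
and you verify it perturbatively through $O(\lambda^4)$, but you do not have a proof for general $\lambda$. The routes you propose---an ODE in $\lambda^2$ from Gaussian integration by parts, or going back through the semi-martingale decomposition---are plausible heuristics but you have not carried them out, and it is not clear either would close without substantial further work. Recall that $\gamma_L(\beta)$ is supposed to be a polynomial in $\beta$ \emph{only} in the white-noise case; a soft argument risks not distinguishing this from the smooth-$R$ case where higher-order corrections are genuinely present.

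The paper's proof is entirely different and does not rely on any perturbative or variational structure. It invokes Yor's explicit density formula~\cite{Yor92} for the exponential functional $Y_\lambda=\int_0^1 e^{\lambda W'(u)}du$ of the Brownian bridge: the density $f_\lambda(z)$ is written as an explicit single integral (see~\eqref{fla}), and $\EE[Y_\lambda^{-2}]=\int_0^\infty z^{-2}f_\lambda(z)\,dz$ is computed directly. After a change of variables and an elementary Laplace-type integration in $z$, the expression collapses to the imaginary part of a contour integral of a meromorphic function $\varphi(z)=\frac{\sinh z}{(\cosh z-1)^3}e^{-2z^2/\lambda^2}$ around its unique pole at $z=0$. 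A residue calculation at the fifth-order pole gives exactly $1+\lambda^2/12$, with no higher-order terms. This is a hard analytic identity, not a soft probabilistic one, and that is why the answer is an exact polynomial.
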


The proof of \eqref{e.gammawhite} is a direct calculation, which we present below \footnote{The published version \cite[Equation (4.4)]{wrong} contains a mistake.}. In this case $\pi_\infty$ is explicitly given by \eqref{rho-i} in terms of a Brownian bridge. With $R(\cdot)=\delta(\cdot)$,  $d=1$  and $\varrho(x)=e^{\beta W(x)}/\int_0^L e^{\beta W(x')}dx'$, where $W$ is a standard Brownian bridge connecting $(0,0)$ and $(L,0)$, we have 
\[
\gamma_L(\beta)=-\tfrac12\beta^2 \int_0^L \EE \varrho(x)^2 dx=-\tfrac12\beta^2L\EE \varrho(0)^2= -\tfrac12\beta^2 L\EE(\int_0^L e^{\beta W(x)}dx)^{-2},
\]
where the second ``='' is due to the stationarity of the process
$\{\varrho(x)\}_{x\in\bT}$. The random variable
$\int_0^L e^{\beta W(x)}dx$ appears
frequently in physics and mathematical finance (see
\cite{MY05} for an extensive discussion).

\begin{proof}[Proof of \eqref{e.gammawhite}]
First, by the scaling property of the Brownian bridge
\[
\int_0^L e^{\beta W(x)}dx\stackrel{\text{law}}{=}L \int_0^1 e^{\beta\sqrt{L}W'(x)}dx,
\]
where $W'$ is a bridge connecting $(0,0)$ with $(1,0)$.
  To simplify the notation, define $Y_\lambda=\int_0^1 e^{\lambda W'(x)}dx$, so it remains to compute $\E Y_\lambda^{-2}$. Denote the density of $Y_\lambda$ by $f_\lambda(z)$, by \cite[Proposition 6.2,
p. 527]{Yor92} we have 
\begin{equation}
\label{fla}
f_\lambda (z)=\frac{4}{\pi
 \lambda^2 z^2}\exp\left\{-\frac{4}{\lambda^2 z}+\frac{2\pi^2}{ \lambda^2}\right\}\int_0^{\infty}\exp\left\{-\frac{2y^2}{ \lambda^2}-\frac{4\cosh
  y}{\lambda^2 z}
  \right\}\big(\sinh y\big)\sin\left(\frac{4\pi y}{\lambda^2}\right)dy.
\end{equation}
Using the above density formula, we have \begin{align}
\label{031006-21}
&
\E Y_\lambda^{-2}=\int_0^\infty z^{-2} f_\lambda(z)dz=\frac{4}{\pi
 \lambda^2 }\exp\left\{\frac{2\pi^2}{ \lambda^2}\right\}\int_0^{\infty}\exp\left\{-\frac{4}{\lambda^2 z} \right\}\frac{1}{
  z^4}\\
&
\times \left( \int_0^{\infty}\exp\left\{-\frac{2y^2}{ \lambda^2}-\frac{4\cosh
  y}{\lambda^2z}
  \right\}\big(\sinh y\big)\sin\left(\frac{4\pi y}{\lambda^2}\right)dy\right)dz.\notag
\end{align}
Changing variables $z'=(z\lambda^2)^{-1}$ we get
\begin{align}
\label{031006-21b}
&
\E Y_\lambda^{-2}=\frac{4 \lambda^4}{\pi
 }\exp\left\{ \frac{2\pi^2}{
  \lambda^2}\right\}\int_0^{\infty}z^2\exp\left\{- 4 z \right\} \\
&
\times \left(\int_0^{\infty}\exp\left\{-\frac{2y^2}{ \lambda^2}-4z\cosh
  y 
  \right\}\big(\sinh y\big)\sin\left(\frac{4\pi y}{\lambda^2}\right)dy \right)dz.\notag
\end{align}
Note that
\begin{align*}
\int_0^{\infty}z^2\exp\left\{- 4 z(1+\cosh y) \right\} d z=\frac{1}{32(1+\cosh y)^3}.
\end{align*}
Hence, we get
\begin{align}
\label{031006-21d}
\E Y_\lambda^{-2}&=\frac{ \lambda^4}{8 \pi
 }\exp\left\{ \frac{2\pi^2}{
  \lambda^2}\right\}  \int_0^{\infty}\frac{\sinh y}{(1+\cosh y)^3}\exp\left\{-\frac{2y^2}{ \lambda^2} 
  \right\}\sin\left(\frac{4\pi y}{\lambda^2}\right)dy.
\end{align}

Recall that
\[
\gamma_L(\beta)=-\frac{\beta^2}{2L}\E Y_{\beta\sqrt{L}}^{-2}.
\]
To complete the proof of \eqref{e.gammawhite} it remains to show that
$\E Y_{\lambda}^{-2}=1+\frac{\lambda^2}{12}$. To see the latter note
that, it follows from \eqref{031006-21d} that
\begin{align*}
\E Y_\lambda^{-2}=\frac{ \lambda^4}{16 \pi
 } {\rm Im}\left(\int_{\bbR}\frac{\sinh y}{(1+\cosh
                   y)^3}\exp\left\{-\frac{2(y -\pi i)^2}{ \lambda^2} 
                   \right\} dy\right).                
\end{align*}
Using the relations $\sinh (y+i\pi)=-\sinh y$, $\cosh
(y+i\pi)=-\cosh y$ and the change of variables $y':=y-i\pi$, we can further rewrite the right hand side as being
equal to 
\begin{align*}
& \frac{ \lambda^4}{16 \pi
 } {\rm Im}\left(\int_{\bbR-i\pi}\frac{\sinh y}{(\cosh
                   y-1)^3}\exp\left\{-\frac{2y^2}{ \lambda^2} 
                 \right\} dy\right)\\
  &
    =-\frac{ \lambda^4}{16 \pi
 } {\rm Im}\left(\int_{\bbR+i\pi}\frac{\sinh y}{(\cosh
                   y-1)^3}\exp\left\{-\frac{2y^2}{ \lambda^2} 
                   \right\} dy\right).                
\end{align*}
The last equality follows from the change of variables $y':=-y$
and the fact the integrand is odd. We can write therefore that
\begin{align*}
\E Y_\lambda^{-2}=\frac{ \lambda^4}{32 \pi
 } {\rm Im}\left(\int_{\cal C}\frac{\sinh z}{(\cosh
                   y-1)^3}\exp\left\{-\frac{2z^2}{ \lambda^2} 
                   \right\} dz\right).           
\end{align*}
Here ${\cal C}$ is an arbitrary counter-clockwise oriented, contour surrounding the unique pole $z=0$ of
the holomorphic function $\varphi(z):=  \frac{\sinh z}{(\cosh
                   z-1)^3}\exp\left\{-\frac{2z^2}{ \lambda^2} 
                   \right\}$.
Hence
\begin{align*}
\E Y_\lambda^{-2}=\frac{ \lambda^4}{16 
 } {\rm Re}\,\Big({\rm res}\,\varphi\Big),          
\end{align*}
where ${\rm res}\,\varphi$ is the residuum of the function $\varphi$ at
$0$. To compute the latter, note that for $|z|$ sufficiently small, we
can write
\begin{align*}
&(\cosh
                 z-1)^{-3}=\left(\frac{z^2}{2}\right)^{-3}\left[1+\frac{z^2}{12}+\frac{z^4}{360}+z^6\psi(z)\right]^{-3}=
\frac{8}{z^6}   \left[1-\frac{z^2}{4} +
    \frac{z^4}{30}  +z^6\psi(z)\right].
\end{align*}
Here and below $\psi(z)$ is some generic function holomorphic in a
               neighborhood of $0$. We have also used the expansion $
\frac{1}{(1+z)^3}=1-3z+6z^2+z^3 \psi(z)
$ valid for $|z|<1$.
We also have
 \begin{align*}             
  &
    \sinh z=z\left[1+\frac{z^2}{6}+\frac{z^4}{120}+z^6 \psi(z)\right],\\
  &
  \exp\left\{-\frac{2z^2}{\lambda^2}\right\}=1-\frac{2z^2}{\lambda^2}+  \frac{2z^4}{\lambda^4}+z^6 \psi(z).
 \end{align*}
 Putting all things together we get
 \begin{align*}             
  &\varphi(z) 
      =\frac{8}{z^5}\left\{1+z^2\left(\frac{1}{6}-\frac{2}{\lambda^2}\right)+z^4\left(\frac{1}{120}
    -\frac{1}{3 \lambda^2} +  \frac{2}{\lambda^4}\right)+z^6 \psi(z)\right\}\\
  &
    \times\left[1-\frac{z^2}{4} +
    \frac{z^4}{30}  +z^6 \psi(z)\right].
 \end{align*}
 We conclude therefore that
 $$
 {\rm res}\,\varphi=8\left( 
    \frac{1}{6 \lambda^2} +
                 \frac{2}{\lambda^4}  \right)
$$
and
$
\E Y_\lambda^{-2}=\frac{\lambda^2}{12 } +1$.    
The proof of \eqref{e.gammawhite}  is complete.
\end{proof}

The proof of \eqref{e.gammaex} is   based on a small $\beta\ll1$ expansion of the stationary PDE hierarchy \eqref{e.hierarchystationary}. Since $\int_{\bT^{nd}} \mathcal{Q}_n(\bx_{1:n})d\bx_{1:n}=1$, and  $\Delta \mathcal{Q}_n\approx 0$ at leading order, one immediately guesses  the leading order to be
\[
\mathcal{Q}_n(\bx_{1:n})\approx L^{-nd},\quad\quad \beta\ll1.
\] Assuming that 
\[
\mathcal{Q}_n=L^{-nd}+\beta^2\mathcal{Q}_{n,1}+\beta^4\mathcal{Q}_{n,2}+\ldots
\]
and plugging into \eqref{e.hierarchystationary}, we can solve
$\mathcal{Q}_{n,k}$  explicitly and recursively for all
$k\geq1$. Taking $n=2$ and using the formula \eqref{e.congamma}, we
obtain \eqref{e.gammaex}.
 
It is worth pointing out that the term $-\tfrac{1}{2L^d}\beta^2$ vanishes as $L\to\infty$, and is referred to as  the ``finite size correction'' in the physics literature \cite{krug}. It actually comes from the self-overlap of the uniform measure on $\bT^d$. After all, at $\beta=0$, the polymer measure becomes Brownian motion and the $\pi_\infty$, which is a probability measure on $\mathcal{M}_1(\bT^d)$, in this case degenerates to Dirac measure at the uniform measure on $\bT^d$.

\begin{remark}
We do not have  an analogue of  formula \eqref{e.congamma} in the whole space setting. In this case, the endpoint density $\rho(t,\cdot)$ spreads to infinity as $t$ grows, and there does not exist an apparent equilibrium. Nevertheless, the replica overlap 
\[
\mathcal{R}(\rho(t))=\int_{\R^{2d}} R(x_1-x_2)\rho(t,x_1)\rho(t,x_2)dx_1dx_2
\] is
invariant under the spatial shift of the polymer endpoint. By embedding the
endpoint distribution into an abstract space, which factors out the
spatial shift, significant progress has been made recently on
the localization properties of the endpoint distribution, see \cite{bakhtin2020localization,bates2020endpoint,broker2019localization}. In this case, the limiting free energy
can be expressed as the solution of a variational problem, generalizing \eqref{e.congamma} in a sense. There is a more recent result in the case of $R(\cdot)=\delta(\cdot)$ in $d=1$: for each $t>0$, the random density $\rho(t,\cdot)$ has a unique mode, denoted by $x_t$, and after a   shift by $x_t$, the following weak convergence on $C(\R)$ holds:  
 \begin{equation}\label{e.das}
 \{\rho(t,x_t+x)\}_{x\in\R} \Rightarrow  \left\{\frac{e^{-\B(x)}}{\int_{\R}e^{-\B(x')}dx'}\right\}_{x\in\R}, \quad\quad \mbox{ as } t\to\infty.
 \end{equation}
 Here $\B$ is a two-sided $3$d-Bessel process with diffusion coefficient $1$, see \cite[Theorem 1.5]{das} for more details. Thus, one may expect that the Lyapunov exponent in this case,  $\gamma_\infty(\beta)=-\tfrac{1}{24}\beta^4$, admits a probabilistic representation in terms of $\B$.
\end{remark}

\section{Fluctuations of the height function}\label{s.height}

In this section, we review our study on the random fluctuations of the height function $h(t,x)=\log Z(t,x)$. We first state the main result \cite[Theorem 1.1]{GK21} and \cite[Corollary 2.3]{ADYGTK22}:
\begin{theorem}\label{t.cltheight}
The height function $h(t,x)$ satisfies a central limit theorem: with $\gamma_L(\beta)$ given by \eqref{e.congamma}, for any $x\in\bT^d$, we have 
\begin{equation}\label{e.clt}
\tfrac{1}{\sqrt{t}}(h(t,x)-\gamma_L(\beta)t)\Rightarrow N(0,\sigma_L^2(\beta))
\end{equation}
in distribution as $t\to\infty$. The variance $\sigma_L^2(\beta)$ admits the following representation:
\begin{equation}\label{e.exvar}
\sigma_L^2(\beta)=\beta^2 \int_{\bT^{2d}}\EE\left[ \prod_{j=1}^2 \EE \left[\frac{e^{H_1(y_j)+H_2(y_j)}}{\int_{\bT^d}e^{H_1(y')+H_2(y')}dy'}\bigg|H_2\right] \right]R(y_1-y_2)dy_1dy_2,
\end{equation}
where $H_1,H_2$ are sampled independently from the invariant measure
for the KPZ equation. Alternatively, one can rewrite \eqref{e.exvar}
with $e^{H_1}, e^{H_2}$ replaced  by $\varrho_1,\varrho_2$ which are
sampled independently from the $\pi_\infty$ in Theorem~\ref{t.ofos}.

In the case of a $1+1$ spacetime white noise, we have 
\begin{equation}\label{e.exvarwhite}
\sigma_L^2(\beta)=\beta^2\EE \frac{\int_0^L e^{\beta (W_1(y)+W_2(y)+2W_3(y))}dy}{\int_0^L e^{\beta (W_1(y)+W_3(y))}dy\int_0^L e^{\beta(W_2(y)+W_3(y))}dy},
\end{equation}
where $\{W_i\}_{i=1,2,3}$ are independent standard Brownian bridges
connecting $(0,0)$ and $(L,0)$, and in this case,  for a fixed
  $\beta>0$  we  have 
\begin{equation}\label{e.decaysigma}
\sigma_L^2(\beta)\asymp L^{-1/2}, \quad\quad \mbox{ for } L\gg1.
\end{equation}
\end{theorem}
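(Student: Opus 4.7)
The plan is to combine the semi-martingale decomposition from Section~\ref{s.projective},
\[
\log Z(t,x) = \log \bar Z_0 + M_t - \tfrac12\langle M\rangle_t + \log \rho(t,x),
\]
with the exponential mixing of the projective process in Theorem~\ref{t.ofos}. Since \eqref{e.mmbdrho} gives uniform $L^p$ bounds on $\log\rho(t,x)$ and $\log\bar Z_0$ is fixed, these two terms contribute $O(1)$ in $L^p$ after dividing by $\sqrt t$, so the CLT reduces to joint asymptotic Gaussianity of $t^{-1/2}\bigl(M_t - \tfrac12\langle M\rangle_t - \gamma_L(\beta)t\bigr)$.

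The first main step is a homogenization/corrector argument for the additive functional $\langle M\rangle_t = \beta^2\int_0^t \mathcal R(\rho(s))\,ds$. Set $\bar{\mathcal R} := \int_{\bT^{2d}} R(x_1-x_2)\mathcal Q_2(x_1,x_2)\,dx_1dx_2$, so \eqref{e.congamma} reads $\gamma_L(\beta) = -\tfrac12\beta^2\bar{\mathcal R}$. Let $\mathcal L$ denote the generator of the Markov process $\rho$ on $\mathcal M_1(\bT^d)$. The spectral-gap estimate \eqref{051705-23} allows one to solve the Poisson equation $-\mathcal L\chi = \mathcal R - \bar{\mathcal R}$ via the resolvent
\[
\chi(\nu) := \int_0^\infty \E_\nu\bigl[\mathcal R(\rho(s))-\bar{\mathcal R}\bigr]\,ds,
\]
with $\chi$ bounded uniformly in the initial data. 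Itô/Dynkin's formula (justified via approximation in the singular case $R=\delta$) then yields
\[
\int_0^t\bigl(\mathcal R(\rho(s))-\bar{\mathcal R}\bigr)\,ds = \chi(\rho(0)) - \chi(\rho(t)) + N_t
\]
for a martingale $N_t$. Consequently $h(t,x) - \gamma_L(\beta)t = \tilde M_t + O(1)$ in $L^p$, where $\tilde M_t := M_t - \tfrac12\beta^2 N_t$ is itself a martingale.

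I would then invoke the martingale central limit theorem for $\tilde M_t$. Since $\langle \tilde M\rangle_t$ is again an additive functional of $\rho$, coupling to the stationary version via Theorem~\ref{t.ofos} and an ergodic theorem gives almost sure convergence $t^{-1}\langle\tilde M\rangle_t \to \sigma_L^2(\beta)$ to a deterministic limit, producing \eqref{e.clt}. To recast $\sigma_L^2(\beta)$ in the form \eqref{e.exvar}, I would interpret the polymer on $[0,t]$ as a concatenation of a forward piece and a backward piece, using the forward-backward symmetry displayed after \eqref{e.forwardbackward}: sampling both ``endpoint'' distributions independently from $\pi_\infty$ produces two independent invariant samples $H_1,H_2$ (equivalently $\varrho_j = e^{H_j}/\int e^{H_j}$), while the conditional expectation given $H_2$ in \eqref{e.exvar} encodes the centering of $M$-increments against the $\langle M\rangle$-drift, and the convolution against $R(y_1-y_2)$ descends directly from $\mathcal R$.

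For the $1+1$ white-noise case, inserting the explicit invariant measure \eqref{rho-i} into \eqref{e.exvar} and evaluating the inner conditional expectation against the Brownian-bridge law introduces a third independent bridge $W_3$ through a Girsanov-type tilt, giving \eqref{e.exvarwhite} after simplification. The asymptotic \eqref{e.decaysigma} follows from the Brownian-bridge rescaling $W_i(y) = \sqrt L\,\tilde W_i(y/L)$, turning the problem into an analysis of unit bridges with effective parameter $\beta\sqrt L$, and a Laplace-type expansion around the near-maxima of the paired bridges $W_1+W_3$, $W_2+W_3$ where the denominators concentrate. \textbf{The main obstacle} is this last step: the numerator and both denominators share the bridge $W_3$, so independence cannot be used to decouple the tails, and matching the two-sided bound $\sigma_L^2(\beta)\asymp L^{-1/2}$ requires delicate uniform control on small-ball and upper-tail estimates for correlated Brownian-bridge exponentials, in the spirit of the exponential functional estimates of \cite{MY05}.
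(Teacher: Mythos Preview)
Your outline of the CLT via the semi-martingale decomposition, the Poisson equation for the corrector $\chi$, and the martingale CLT for $\tilde M_t$ is correct and is exactly the paper's first proof (Section~\ref{s.homogenization}). That argument, however, produces the variance in the form
\[
\sigma_L^2(\beta)=\beta^2\,\EE\!\int_{\bT^{2d}}\prod_{j=1}^2\varrho(y_j)\bigl[1-\tfrac12\beta^2\mathcal D\chi(\varrho,y_j)\bigr]R(y_1-y_2)\,dy_1dy_2,
\]
i.e.\ in terms of the corrector gradient, and \emph{not} in the explicit form \eqref{e.exvar}. Your paragraph on ``recasting $\sigma_L^2(\beta)$ in the form \eqref{e.exvar}'' is where the gap lies: the sentence about the conditional expectation given $H_2$ ``encoding the centering of $M$-increments against the $\langle M\rangle$-drift'' is not a mechanism that produces \eqref{e.exvar}, and no amount of forward--backward symmetry of $\rho_{\mathrm f},\rho_{\mathrm b}$ alone will extract that formula from the corrector representation.

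The paper obtains \eqref{e.exvar} by a \emph{second, independent} proof (Section~\ref{s.clarkocone}) based on the Clark--Ocone representation $h(t,x)-\EE h(t,x)=\int_0^t\!\int_{\bT^d}\EE[D_{s,y}h(t,x)\,|\,\mathcal F_s]\,\xi(s,y)\,dy\,ds$. The key structural fact is that $D_{s,y}h(t,x)$ equals $\beta$ times the quenched \emph{midpoint} density of the polymer at time $s$, namely $\beta\,\rho_{\mathrm b}(t,x;s,y)\rho_{\mathrm f}(s,y;0,\mu_{Z_0})/\text{(normalization)}$. For $1\ll s\ll t$ the mixing from Theorem~\ref{t.ofos} replaces both the forward and backward factors by independent stationary samples $\varrho_1,\varrho_2\sim\pi_\infty$. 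The conditioning on $\mathcal F_s$ then averages out the backward factor $\rho_{\mathrm b}$ (it depends only on noise in $[s,t]$) while keeping the forward one; this is precisely the inner $\EE[\,\cdot\,|H_2]$ in \eqref{e.exvar}, with $H_2$ playing the role of the forward density at time $s$ and $H_1$ the averaged-out backward density. It\^o isometry then gives \eqref{e.exvar} directly. This midpoint-density interpretation of the Malliavin derivative is the missing idea in your proposal.

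Two smaller points. First, \eqref{e.exvarwhite} does not require any Girsanov tilt: once $R=\delta$ collapses the $y_1,y_2$ integral, the product $\prod_{j=1}^2\EE[\,\cdot\,|H_2]$ becomes a square, which one linearizes by introducing a second independent copy of $H_1$; relabelling the two copies as $W_1,W_2$ and $H_2$ as $W_3$ yields \eqref{e.exvarwhite} by pure algebra. Second, you are right that \eqref{e.decaysigma} is the genuinely delicate estimate; the paper also does not prove it in the text and refers to \cite[Section~4]{ADYGTK22} for the matching upper and lower bounds on the correlated Brownian-bridge functional.
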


The central limit theorem in \eqref{e.clt} is not surprising. As
discussed in Section~\ref{s.projective}, the projective process $\rho$
mixes exponentially fast, and since one can write the height function
$\log Z$ as an additive functional of $(\rho(t,\cdot),\xi(t,\cdot))$,
as in \eqref{e.semilogZ}, it is only natural to expect Gaussian fluctuations. Given the fast mixing, what is not so clear is if the effective variance is strictly positive. This is usually a separate issue in the proof of the central limit theorem, regardless of how fast the decorrelation is, and we will deal with it using the special structure of the problem. 

While  the projective process $\rho$  was used in our argument as the underlying driving Markov process, one can also consider the solution to the stochastic Burgers equation $u=\nabla h=\nabla \log Z$. On a formal level, the equation \eqref{e.kpz} of $h$ has the forcing terms $u^2$ and $\xi$, and the solution is approximately an additive functional of $(u(t,\cdot),\xi(t,\cdot))$. Using the energy solution formulation,   \eqref{e.clt} can also be proved by utilizing the fast mixing of $u$, and this generalizes to a  class of fractional Burgers equations \cite{PY}.

What surprised us was that the effective diffusivity in \eqref{e.exvar} is expressed explicitly in terms of the invariant measure of the   projective process. 
It is indeed the case for the Lyapunov exponent $\gamma_L(\beta)$ 
in \eqref{e.congamma}, where we have written $\gamma_L(\beta)$ in terms of the invariant measure $\pi_\infty$. This is natural since $\gamma_L(\beta)$ represents the averaging growth speed of the interface,  which only depends on the ``one-time'' distribution of $\rho(t,\cdot)$ or $u(t,\cdot)$, depending if we use the semi-martingale decomposition in \eqref{e.semilogZ} or the equation \eqref{e.kpz} itself. However, for the variance, it should depend on the ``two-time'' correlation, and generally, the invariant measure alone is not sufficient. This indicates that the equation must possess a certain special structure to yield something like \eqref{e.exvar}. We will discuss this aspect in more detail in Section 5 below.

In the special case of a $1+1$ spacetime white noise, where the
invariant measure $\pi_\infty$ is known, the effective diffusivity can
be expressed in terms of three independent copies of  Brownian
bridges. This allows us to derive the asymptotics of
$\sigma_L^2(\beta)$ as $L\to\infty$ and obtain the decaying rate of
$\sigma_L^2(\beta)\asymp L^{-1/2}$ for a fixed
  $\beta>0$. When $L$ is fixed then, by a simple scaling argument one
  can conclude from \eqref{e.exvarwhite} and \eqref{e.decaysigma} that
  $$
\sigma_L^2(\beta)\asymp \beta^2, \, \mbox{ for } \beta\ll1\quad
\mbox{and}\quad \sigma_L^2(\beta)\asymp \beta^3, \, \mbox{ for } \beta\gg1.
$$

For general growth models in the cylindrical geometry, while the expressions of $\sigma_L^2(\beta)$ may vary depending on the model, the decaying rate of $L^{-1/2}$ is expected to be universal and related to the scaling exponents in the KPZ universality class. The results on the exclusion process can be found in \cite{DEM93}. For the KPZ equation with $1+1$ spacetime white noise, take $x=0$ in  \eqref{e.clt}, we have $\Var\, [h(t,0)]\approx \sigma_L^2(\beta)t\approx L^{-1/2}t$. On the other hand, consider the ``transversal roughness'' $\Var [h(t,x)-h(t,0)]$, and assuming equilibrium so that $h(t,x)-h(t,0)$ is approximately a Brownian bridge, we get $\Var [h(t,x)-h(t,0)] \asymp |x|$. To observe nontrivial correlations between $h(t,0)$ and $h(t,x)$ with $x\in [0,L]$, it is natural to expect the one point variance to be of the same order as the transversal roughness. Thus, the transition occurs in the regime of $tL^{-1/2}\asymp L$. This leads to the critical length scale $L\asymp t^{2/3}$ and the order of fluctuations $\sqrt{\Var h(t,0)}\asymp t^{1/3}$.

The following result makes the above heuristics rigorous in certain regimes as $t,L\to\infty$ together \cite[Theorem 1.1]{ADYGTK22}: 
\begin{theorem}\label{t.123}
	 In the case of $1+1$ spacetime white noise, assume $h(t,x)$ starts from equilibrium, 	 and suppose that $L=\lambda t^{\alpha}$ for some
         $\lambda>0$ and $\alpha\geq 0$. Then, there is a universal constant $\delta>0$ so that the following
        holds:
	\begin{equation}\label{e.conjecture1}
		\Var [h(t,x)] \asymp
		\begin{cases}
			t^{1-\frac{\alpha}{2}}, & \text{if }\alpha\in[0,2/3)\text{ and }\lambda\in(0,\infty);                             \\
			t^{2/3},        & \text{if }\alpha = 2/3\text{ and }\lambda\in(0,\delta).
		\end{cases}
              \end{equation}
In the general case of $\alpha=2/3$ and $\lambda\in(0,\infty)$,  the upper bound holds 
\[
\Var[ h(t,x)]\les  t^{2/3}.
\] 
\end{theorem}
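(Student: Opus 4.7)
The plan rests on the semi-martingale decomposition \eqref{e.semilogZ}--\eqref{e.logZtx}: starting from equilibrium, $\log\rho(t,x)$ is $t$-stationary with variance controlled by the Brownian-bridge representation of $\pi_\infty$ in Theorem~\ref{t.ofos}, so the problem reduces to estimating $\Var[M_t-\tfrac12\langle M\rangle_t]$. At fixed $L$ this quantity grows linearly in $t$ at rate $\sigma_L^2(\beta)$ by Theorem~\ref{t.cltheight}, and the key scale is provided by \eqref{e.decaysigma}, $\sigma_L^2(\beta)\asymp L^{-1/2}$.

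\textbf{Subcritical regime and small-$\lambda$ critical case.} In both sub-cases the mixing timescale $L^{3/2}=\lambda^{3/2}t^{3\alpha/2}$ is a small fraction of $t$: the block count $N:=t/L^{3/2}=\lambda^{-3/2}t^{1-3\alpha/2}$ diverges for $\alpha<2/3$, and equals the large constant $\lambda^{-3/2}\gg 1$ when $\alpha=2/3$ and $\lambda$ is small. I would upgrade the proof of Theorem~\ref{t.cltheight} to a quantitative CLT whose error is a negative power of $N$. Concretely, partition $[0,t]$ into $N$ windows of length $\sim L^{3/2}$ and use the exponential mixing estimate \eqref{051705-23} to show that the increments of $M-\tfrac12\langle M\rangle$ on distinct windows are almost uncorrelated; the variance contribution of a single window is of order $\sigma_L^2(\beta)\cdot L^{3/2}\asymp L$, and summing yields
\[
\Var[h(t,x)]\;\asymp\;N\cdot L\;\asymp\;\sigma_L^2(\beta)\,t\;\asymp\;L^{-1/2}\,t\;=\;\lambda^{-1/2}\,t^{1-\alpha/2},
\]
which produces both asymptotics in \eqref{e.conjecture1}.

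\textbf{Upper bound at $\alpha=2/3$ for all $\lambda$.} Here $N=\lambda^{-3/2}=O(1)$, so a block decomposition cannot produce many effectively independent pieces. I would instead use the Malliavin-calculus route already underlying the derivation of \eqref{e.exvarwhite}. The Clark--Ocone representation together with the identity $D_{s,y}\log Z(t,x)=\beta\,\G_{t,s}(x,y)Z(s,y)/Z(t,x)$, which is $\beta$ times a probability density in $y$ closely related to the backward endpoint density $\rhob(t,x;s,y)$, reduces $\Var[\log Z(t,x)]$ to space-time integrals of second moments of endpoint densities. Using the equilibrium initial data and the uniform moment bound \eqref{e.mmbdrho} to replace these quantities by their stationary counterparts, the computation that produces \eqref{e.exvarwhite} upgrades to a non-asymptotic inequality $\Var[h(t,x)]\lesssim \sigma_L^2(\beta)\,t\asymp L^{-1/2}\,t$, which at $\alpha=2/3$ reads $\lesssim t^{2/3}$.

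\textbf{Main obstacle.} The hardest part is the matching lower bound at $\alpha=2/3$ for general $\lambda$, precisely what the theorem leaves open. When $\lambda$ is not small there is no effective averaging over multiple mixing windows, so one cannot rely on a central-limit-type cancellation to extract the strict positivity of $\sigma_L^2(\beta)$. The cancellation between the martingale $M_t$ and its compensator $\tfrac12\langle M\rangle_t$ that produces the positive effective variance in Theorem~\ref{t.cltheight} must be tracked uniformly as the shape parameter $\lambda^{-3/2}=t/L^{3/2}$ ranges over a bounded interval; this amounts to a scale-uniform, non-asymptotic version of Theorem~\ref{t.cltheight} with a matching lower bound. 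The explicit formula \eqref{e.exvarwhite} in terms of three independent Brownian bridges of length $L\asymp t^{2/3}$ does not obviously deliver such a bound, since as $L$ grows the three bridges become highly non-trivial and their interaction governs the variance expression in a way that is delicate to lower-bound.
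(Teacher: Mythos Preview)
Your subcritical argument has a genuine gap. The block decomposition rests on the claim that the mixing time of the projective process is $\asymp L^{3/2}$, but the paper only states this heuristically; Theorem~\ref{t.ofos} gives exponential mixing with constants $C,\lambda$ that \emph{depend on $L$} in an unspecified way. Without a proof that $\lambda\gtrsim L^{-3/2}$ you cannot control correlations between blocks, and establishing that sharp spectral-gap estimate is a separate hard problem. Moreover, your claim that a single window of length $L^{3/2}$ contributes variance $\sigma_L^2(\beta)\cdot L^{3/2}$ already presupposes the linear-in-time behavior you are trying to prove, so the argument is close to circular at the critical scale.

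The paper's route avoids mixing rates entirely. Starting from equilibrium, the Clark--Ocone decomposition \eqref{e.deadditive4} is an \emph{exact} identity:
\[
h(t,0)-\EE h(t,0)=\mathcal{Y}(\rho(0))-\mathcal{Y}(\rho(t))+\beta\int_0^t\!\!\int_{\bT}\mathcal{A}(\rho(s),y)\rho(s,y)\,\xi(s,y)\,dy\,ds.
\]
Because $\{\rho(s)\}$ is stationary, It\^o isometry gives the martingale variance as \emph{exactly} $t\,\sigma_L^2(\beta)$---no blocks, no mixing constant, just \eqref{e.exvar}. The only remaining ingredient is a bound on the boundary terms $\mathcal{Y}(\rho(0))-\mathcal{Y}(\rho(t))$: since each $\rho(\cdot)$ is distributed as the Brownian-bridge exponential \eqref{rho-i}, an elementary estimate gives $\Var[\mathcal{Y}(\varrho)]\lesssim L$. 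Comparing the two scales $tL^{-1/2}$ and $L$ then yields all of \eqref{e.conjecture1}: for $\alpha<2/3$ the martingale dominates and both bounds follow; at $\alpha=2/3$ both terms are $\asymp t^{2/3}$, giving the upper bound for all $\lambda$, while the lower bound $\sqrt{\Var h}\ge \sqrt{t\sigma_L^2(\beta)}-O(\sqrt{L})\gtrsim(\lambda^{-1/4}-C\lambda^{1/2})t^{1/3}$ is positive only for small $\lambda$. Your Clark--Ocone idea for the upper bound is exactly the right instinct---use it uniformly, and keep track of the boundary terms $\mathcal{Y}$, which are what force the smallness restriction on $\lambda$.
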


In the following two sections, we will present and discuss two different proofs of the central limit theorem \eqref{e.clt}. 
Section~\ref{s.homogenization} covers the first proof, which is based
on the semi-martingale decomposition, discussed in
Section~\ref{s.projective}, and the homogenization argument for an
additive functional of the projective
process. Section~\ref{s.clarkocone} presents a second proof, which
utilizes tools from the Malliavin calculus to derive the explicit variance in \eqref{e.exvar}. We will not delve into the proof of \eqref{e.decaysigma}, which, although it only involves calculations with Brownian bridges and may appear straightforward, is actually quite complex. Interested readers should refer to \cite[Section 4]{ADYGTK22} for details. Given the decay of the variance $\sigma_L^2(\beta)\asymp  L^{-1/2}$, the proof of Theorem 3.2 is relatively straightforward and can be found in \cite[Section 3]{ADYGTK22}.

\subsection{Homogenization of additive functional of projective process}
\label{s.homogenization}

Let us recall that, using \eqref{e.semilogZ} and \eqref{e.logZtx}, one can write 
\begin{equation}\label{e.decomh}
\begin{aligned}
h(t,x)&=\log Z(t,x)=\log \bar{Z}_t+\log \rho(t,x)\\
&=\log \bar{Z}_0+\beta \int_0^t\int_{\bT^d} \rho(s,x)\xi(s,x)dxds-\tfrac12\beta^2 \int_0^t \mathcal{R}(\rho(s))ds+\log \rho(t,x).
\end{aligned}
\end{equation}
 Let us take a look at the terms on the r.h.s. of the above equation. For $t\gg1$, by \eqref{e.mmbdrho}, we know that $\log \rho(t,x)$ is of order $O(1)$, hence it vanishes in large time after divided by $\sqrt{t}$. 
The martingale term is straightforward to deal with, appealing to the martingale central limit theorem and the one-force-one-solution principle established for the $\rho$ process in Theorem~\ref{t.ofos}. We sketch the proof for general martingales of the form 
\begin{equation}\label{e.generalmartingale}
\cM_t=\int_0^t\int_{\bT^d} F(\rho(s),y)\xi(s,y)dyds,
\end{equation}
where $F:D^\infty(\bT^d)\times \bT^d\to\R$ is a functional
  that is sufficiently regular (e.g. H\"older continuous) in the first
variable and  satisfies
a certain growth condition. Recall that $D^\infty(\bT^d)$ is the space
of continuous densities w.r.t. the Lebesgue measure, equipped with the
topology of uniform convergence. By the martingale central limit theorem,
see e.g. \cite[Theorem VIII.2.17]{JS03},
to show $\eps \cM_{t/\eps^2}$ converges in distribution in
$C[0,\infty)$, as $\eps\to0$, to some Brownian motion with variance
$\Sigma^2>0$, it suffices to establish that the process $\eps^2\la
\cM\ra_{t/\eps^2}\to \Sigma^2 t$ in probability in
$C[0,\infty)$. 
The rescaled bracket process takes the form 
\begin{equation}\label{e.bracketcM}
\eps^2\la \cM\ra_{t/\eps^2}=\eps^2\int_0^{t/\eps^2}\int_{\bT^{2d}} F(\rho(s),y_1)F(\rho(s),y_2)R(y_1-y_2)dy_1dy_2ds.
\end{equation}
The tightness of $\{\eps\la \cM\ra_{t/\eps^2}:
t\geq0\}_{\eps\in(0,1)}$ can be directly checked by the Kolmogorov
criterion, using the appropriate growth assumption on $F(\cdot)$ and
the moment bound on $\rho$ given by \eqref{e.mmbdrho}. To show the
convergence of finite dimensional distributions, it is enough to prove that for fixed $t>0$, $\eps^2\la \cM\ra_{t/\eps^2}\to \Sigma^2 t$ in probability.  There are two possible approaches to addressing this: (i) one can show the stronger $L^2(\Omega)$ convergence by proving that 
\[
\EE \Big[\eps^2\la \cM\ra_{t/\eps^2}\Big]\to \Sigma^2t, \quad\mbox{ and } \quad \EE \Big[ (\eps^2\la \cM\ra_{t/\eps^2})^2 \Big]\to (\Sigma^2t)^2.
\]
This was the proof presented in \cite[Proposition 4.9, Proposition 5.27]{GK21}. (ii)  One can first replace the $\rho(s)$ in \eqref{e.bracketcM} by its stationary version $\tilde{\rho}(s)$. In other words, rather than starting from $\rho(0)$, we let $\tilde{\rho}$ start from the invariant measure and experience the same noise $\xi$ as $\rho$. The error induced by this approximation is small, by the exponential contraction in \eqref{051705-23} and the factor $\eps^2$ on the r.h.s. of \eqref{e.bracketcM}. Since $\tilde{\rho}$ starts from stationarity, one can apply the  Birkhoff’s Ergodic Theorem to conclude the almost sure convergence of 
\[
\eps^2\int_0^{t/\eps^2}\int_{\bT^{2d}} F(\tilde{\rho}(s),y_1)F(\tilde{\rho}(s),y_2)R(y_1-y_2)dy_1dy_2ds,
\]
which would complete the proof.

To summarize, with   Theorem~\ref{t.ofos}, it is relatively straightforward to prove an invariance principle for general martingales of the form \eqref{e.generalmartingale}. Thus, the remaining task is to  approximate the additive functional in \eqref{e.decomh}
\begin{equation}\label{e.additivefunctional}
\int_0^t\mathcal{R}(\rho(s))ds
\end{equation}
by a martingale, which is a classical problem in probability
\cite{KV86,KLO12}.  The idea is explained in the following section.

\subsubsection{Poisson equation, corrector, martingale decomposition}
\label{s.poissoncorrector}

By the exponential contraction in \eqref{051705-23},
it is rather standard to construct the solution to the Poisson equation 
\begin{equation}\label{e.poisson}
-\mathcal{L}\chi=\tilde{\cR},
\end{equation}
with $\mathcal{L}$ the generator of the projective process, and
$\tilde{\cR}$ the properly centered $\cR$:  
\begin{equation}\label{e.centerR}
\tilde{\cR}=\cR-\int_{D^\infty(\bT^d)} \cR(v)\pi_\infty(dv),
\end{equation}
where $D^\infty(\bT^d)$ is the space of continuous densities w.r.t. the Lebesgue measure
on $\bT^d$.
More precisely, the solution to \eqref{e.poisson} can be formally written as
\begin{equation}\label{e.integralcorrector}
\chi(v)=\int_0^\infty \mathcal{P}_t \tilde{\cR}(v)dt,
\end{equation}
where we chose e.g. $v\in D^\infty(\bT^d)$, and $\mathcal{P}_t$ is the semigroup of the process $\rho$
\[
\mathcal{P}_t \tilde{\cR}(v)=\EE [\tilde{\cR}(\rho(t))|\rho(0)=v].
\]
To make sense of the integral representation of $\chi$ in \eqref{e.integralcorrector}, one needs to show that $\mathcal{P}_t \tilde{\cR}(v)$ is sufficiently small for $t\gg1$: 
\begin{equation}\label{e.Ptsmall}
|\mathcal{P}_t \tilde{\cR}(v)|\leq Ce^{-\lambda t}.
\end{equation}
To show the above estimate, again, the idea is to approximate $\rho$ by $\tilde{\rho}$, which starts from the invariant measure and experiences the same noise $\xi$ as $\rho$. Then \eqref{051705-23} implies that 
\[
|\EE [\tilde{\cR}(\rho(t))|\rho(0)=v]-\EE \E_{\pi_\infty}[\tilde{\cR}(\tilde{\rho}(t))]| \leq Ce^{-\lambda t},
\]
where $\E_{\pi_\infty}$ is the expectation on $\tilde{\rho}(0)$. On the other hand, by the invariance of $\pi_\infty$ and the centering in \eqref{e.centerR}, we have $\EE \E_{\pi_\infty}[\tilde{\cR}(\tilde{\rho}(t))]=0$. This completes the proof of \eqref{e.Ptsmall}. More details can be found in \cite[Lemma 5.1]{GK21}.

We define the $\chi$, given by integral on the r.h.s. of
\eqref{e.integralcorrector}, as the solution to the Poisson
equation. It is referred to as the {\em corrector}, an object used
extensively in the homogenization theory. 

With the construction of the corrector $\chi$, the next step is to
write down the Dynkin's martingale associated with it, with the drift
term equal  to the additional functional $\int_0^t
\tilde{\cR}(\rho(s))ds$ we started with. This is not trivial,
especially when $\xi$ is the $1+1$ spacetime white noise. In this
case, the SPDE  \eqref{e.spderho} satisfied  by $\rho$ is only formal,
and, due to the singular noise, $\rho=Z/\bar Z$  is not a strong solution to this equation. In \cite[Section 5.3]{GK21}, we dealt with the technicalities through an approximation by a smooth noise, and in the end, the additive functional can be written as \cite[Corollary 5.20]{GK21}
\begin{equation}\label{e.deadditive}
\int_0^t \tilde{\cR}(\rho(s))ds=\chi(\rho(0))-\chi(\rho(t))+\beta \int_0^t \int_{\bT^d} \D\chi(\rho(s),y)\rho(s,y)\xi(s,y)dyds,
\end{equation}
for any $\rho(0)\in D^\infty(\bT^d)$. Here $\int_0^t
\D\chi(\rho(s),y)\rho(s,y)\xi(s,y)dyds$ is the martingale we were
looking for in the first place, and $\D\chi$ is the Frechet gradient
of the corrector, defined and studied in detail in \cite[Section
5.2]{GK21}. One can formally obtain the expression of the martingale
through an application of the It\^o formula to $\chi(\rho(t))$,
together with the (formal) SPDE \eqref{e.spderho} satisfied by
$\rho$. Note that there are two martingale terms on the r.h.s. of
\eqref{e.spderho}, while the second one does not contribute  in
\eqref{e.deadditive}. This is due to the fact that
\begin{equation}
  \label{010908-24}
  \int_{\bT^d}\D\chi(\rho(s),y)\rho(s,y) dy=0,
\end{equation}
see \cite[Proposition 5.14]{GK21}.

Now one can combine \eqref{e.decomh} and \eqref{e.deadditive} to rewrite the height function as
\begin{equation}\label{e.deadditive2}
\begin{aligned}
h(t,x)-\gamma_L(\beta)t=&\log \bar{Z}_0+\log \rho(t,x)-\tfrac12\beta^2\chi(\rho(0))+\tfrac12\beta^2\chi(\rho(t))\\
&+\beta\int_0^t\int_{\bT^d}\rho(s,y)[1-\tfrac12\beta^2\D\chi(\rho(s),y)]\xi(s,y)dyds.
\end{aligned}
\end{equation}
Dividing by $\sqrt{t}$ on both sides,  all the terms on the first line of the r.h.s. vanish as $t\to\infty$. So it remains to study the martingale term
\[
\tfrac{1}{\sqrt{t}}\mathscr{M}_t=\tfrac{\beta }{\sqrt{t}} \int_0^t\int_{\bT^d}\rho(s,y)[1-\tfrac12\beta^2\D\chi(\rho(s),y)]\xi(s,y)dyds.
\]
By our previous discussion on \eqref{e.generalmartingale}, we conclude that
\begin{equation}\label{e.conmartingale}
\tfrac{1}{\sqrt{t}}\mathscr{M}_t\Rightarrow N(0,\sigma_L^2(\beta)),
\end{equation}
with the following expression of the effective diffusivity
\begin{equation}\label{e.sigmaex1}
\sigma_L^2(\beta)=\beta^2\EE \int_{\bT^{2d}}\prod_{j=1}^2 \varrho(y_j)[1-\tfrac12\beta^2\D\chi(\varrho,y_j)]R(y_1-y_2)dy_1dy_2,
\end{equation}
with $\varrho$ sampled from $\pi_\infty$. This completes the proof of
\eqref{e.clt}. With the homogenization type approach, it is  expected that the effective diffusivity is expressed in terms of the gradient of the corrector, which is indeed the case in \eqref{e.sigmaex1}.

It remains to show that the $\sigma_L^2(\beta)$ obtained above is
strictly positive, otherwise, the convergence in
\eqref{e.conmartingale} might not be  fully
informative, and adjusting the scaling could be necessary. Since the
size of the variance plays a crucial role when enlarging the torus in
 our later exposition, we sketch the proof of
the strict positivity of $\sigma_L^2(\beta)$ below. As it will be
shown, using only soft information on the corrector, it is impossible
to obtain sharp bounds  in terms of the torus size.   The integral 
\[
\int_{\bT^{2d}}\prod_{j=1}^2 \varrho(y_j)[1-\tfrac12\beta^2\D\chi(\varrho,y_j)]R(y_1-y_2)dy_1dy_2
\]
can be viewed as a weighted $L^2(\bT^d)$ norm of the function
$f(y):=\varrho(y) [1-\tfrac12\beta^2\D\chi(\varrho,y)]$. One can write
it using   its Fourier coefficients
\[
\int_{\bT^{2d}}f(y_1)f(y_2)R(y_1-y_2)dy_1dy_2= \sum_{k\in \Z^d}|\hat{f}(k)|^2\hat{R}(k).
\] 
Here the Fourier coefficients are defined as
  $\hat{g}(k)=\tfrac{1}{L^{d/2}}\int_{\bT^d}g(x)e^{-i2\pi k\cdot
    x/L}dx$ for any   $g\in L^1(\bT^d)$. Use the assumption
$\int_{\bT^d} R(x)dx=1$ and the fact that   (see \eqref{010908-24})
\[
\int_{\bT^d} f(y)dy=\int_{\bT^d} \varrho(y) [1-\tfrac12\beta^2\D\chi(\varrho,y)]dy=\int_{\bT^d} \varrho(y)dy=1,
\]
we derive
\begin{equation}\label{e.varlowbd}
\sigma_L^2(\beta)=\beta^2\EE  \sum_{k\in \Z^d}|\hat{f}(k)|^2\hat{R}(k)\geq \beta^2 |\hat{f}(0)|^2\hat{R}(0)=\beta^2L^{-d}.
\end{equation}
Therefore, in all dimensions, using the above argument we have a lower
bound of order $L^{-d}$. It is interesting to note that the
$\beta^2L^{-d}$ on the r.h.s. of \eqref{e.varlowbd} is precisely the
asymptotic variance for the Edwards-Wilkinson equation on a torus of
size $L$: $\partial_t \mathcal{U}=\tfrac12\Delta\mathcal{U}+\beta
\xi$. In $d=1$, by \eqref{e.decaysigma}, we know that
$\sigma_L^2(\beta)\asymp L^{-1/2}\gg L^{-1}$. It is unclear to us how
to use the corrector and the expression for the $\sigma_L^2(\beta)$ in \eqref{e.sigmaex1} to obtain the right decaying rate of $\sigma_L^2(\beta)$.

\subsection{Clark-Ocone formula}
\label{s.clarkocone}

In this section, we present a different proof of the central limit
theorem for the height function, which relies on a   formula from Malliavin calculus. The Clark-Ocone formula allows us to express a (centered) random variable as an explicit It\^o integral. Here is an informal statement (see e.g. \cite[Section 6.1]{CKNP19} for a precise statement): for a smooth random variable $X$ defined on $(\Omega,\F,\PP)$, we have 
\begin{equation}\label{e.clarkocone}
X-\EE X=\int_0^\infty\int_{\bT^d} \EE [D_{s,y}X|\F_s]\xi(s,y)dyds,
\end{equation}
where $D_{s,y}$ is the Malliavin derivative operator associated with the noise $\xi$ on $\R_+\times \bT^d$, $(\F_s)_{s\geq0}$ is the natural filtration generated by $\xi$, and the above integral is interpreted in the It\^o sense.

The key aspect  of the above formula is that the r.h.s. is in the form of an It\^o integral, so,  when viewed as a sum of martingale differences, there is no correlation among the summands. The way this formula helps ``absorbing'' the correlation  is different from the homogenization argument used in Section~\ref{s.poissoncorrector}. There, to study the additive functionals of the projective process, we employed the corrector to ``absorb'' the temporal correlations so that  the variance of the additive functional  can be written in terms of the gradient of the  corrector, and eventually we obtained 
\[
\Var h(t,x)\approx t\sigma_L^2(\beta)
\]
with   the variance $\sigma_L^2(\beta)$ given by \eqref{e.sigmaex1},  in terms of the gradient of the corrector.  With \eqref{e.clarkocone}, one  applies it to $h(t,x)$ and use  It\^o isometry to derive
\begin{equation}\label{e.varhco}
\begin{aligned}
\Var\, h(t,x)=\int_0^t\int_{\bT^{2d}}& \EE\big[\EE [D_{s,y_1} h(t,x)|\F_s]\EE [D_{s,y_2}h(t,x)|\F_s]\big]\\
&\times  R(y_1-y_2)dy_1dy_2ds.
\end{aligned}
\end{equation}

Whether the above expression is useful or not for our purpose obviously depends on how explicitly we can compute the conditional expectation $\EE[ D_{s,y} h(t,x)|\F_s]$. For the KPZ equation, it turns out that $D_{s,y}h(t,x)$ has the interpretation as the midpoint density of a directed polymer. This can be readily seen in the discrete setting, where the   random environment is represented by a sequence of i.i.d. random variables, and $D_{s,y}h(t,x)$ becomes the derivative of the free energy with respect to  the random variable at the spacetime point $(s,y)$, which is precisely the quenched probability of the polymer path passing through $(s,y)$. In our continuous setting, we have
\[
D_{s,y}h(t,x)=D_{s,y} \log Z(t,x)=\frac{D_{s,y}Z(t,x)}{Z(t,x)}.
\]
The calculation of $D_{s,y}Z(t,x)$ can  be done either on the level of the mild formulation of the SHE \cite[Theorem 3.2]{CKNP20} or through the formal Feynman-Kac formula and an approximation argument: 
\begin{equation}\label{e.madeZ}
D_{s,y}Z(t,x)=\beta \G_{t,s}(x,y)Z(s,y),
\end{equation}
where $\G_{t,s}(x,y)$ is the Green's function of the SHE defined in
\eqref{e.defgreen}  and $Z(s,y)$ solves \eqref{e.she}.  Using the forward/backward polymer endpoint density, one can rewrite $D_{s,y}h(t,x)$ as 
\begin{equation}\label{e.madeh}
\begin{aligned}
D_{s,y}h(t,x)&=\beta \frac{\G_{t,s}(s,y)Z(s,y)}{Z(t,x)}\\
&=\beta \frac{\G_{t,s}(x,y)\int_{\bT^d} \G_{s,0}(y,w)Z_0(w)dw}{\int_{\bT^{2d}} \G_{t,s}(x,y')\G_{s,0}(y',w)Z_0(w)dwdy'}\\
&=\beta \frac{\rhob(t,x;s,y)\rhof(s,y;0,\mu_{Z_0})}{\int_{\bT^d}\rhob(t,x;s,y')\rhof(s,y';0,\mu_{Z_0})dy'}.
\end{aligned}
\end{equation}
Here $\mu_{Z_0}$ is the measure on $\bT^d$ with the density $Z_0 /\bar Z_0$. In other words, modulo the parameter $\beta$,   $D_{s,y}h(t,x)$ is precisely the midpoint density of the directed polymer starting at $(t,x)$, running backwards in time, with the boundary condition $\mu_{Z_0}$.

\begin{remark}
To see why \eqref{e.madeZ} holds on a formal level, we compute the derivative on the level of SHE. Using the product rule and the formal identity $D_{s,y}\xi(t,x)=\delta(t-s,x-y)$, we obtain
\[
\begin{aligned}
\partial_t D_{s,y}Z(t,x)&=\frac12\Delta_x D_{s,y}Z(t,x)+D_{s,y}(\beta \xi(t,x)Z(t,x))\\
&=\frac12\Delta_x D_{s,y}Z(t,x)+\beta \xi(t,x)D_{s,y}Z(t,x)+\beta \delta(t-s,x-y)Z(t,x)\\
&=\frac12\Delta_x D_{s,y}Z(t,x)+\beta \xi(t,x)D_{s,y}Z(t,x)+\beta
\delta(t-s,x-y)Z(s,y),\quad t>0\\
 D_{s,y}Z(0,x)& =0.
\end{aligned}
\]
In other words, for fixed $(s,y)$,  with $0<s<t$,
the function $f(t,x):=D_{s,y}Z(t,x)$ solves the SHE with  the zero
initial data and the extra
kick forcing term $\beta \delta(t-s,x-y)Z(s,y)$. Hence $f(t,x)=0$ for
$0\le t<s$ and
\[
\begin{aligned}
\partial_t f(t,x)&=\frac12\Delta f(t,x)+\beta \xi(t,x)f(t,x), \quad\quad t>s, \\
f(s,x)&=\beta \delta(x-y)Z(s,y).
\end{aligned}
\]
Using the Green's function of SHE, $f$ can be written as $f(t,x)=\beta
\G_{t,s}(x,y)Z(s,y)$, for $t>s$, so we have   \eqref{e.madeZ}. This heuristic derivation is pretty formal but it shows us essentially what happens on the level of equations \cite[Theorem 3.2]{CKNP20}.
\end{remark}

Using \eqref{e.madeh}, we can almost immediately identify the origin of  the variance expression in \eqref{e.exvar}. For $s\gg1$ and $t-s\gg1$, by the exponential stabilization of the $\rho$ process, the following approximation holds
\begin{equation}\label{e.appmid}
\begin{aligned}
D_{s,y}h(t,x)&=\beta \frac{\rhob(t,x;s,y)\rhof(s,y;0,\mu_{Z_0})}{\int_{\bT^d}\rhob(t,x;s,y')\rhof(s,y';0,\mu_{Z_0})dy'}\\
&\approx \beta  \frac{\rhob(t,\varrho_1;s,y)\rhof(s,y;0,\varrho_2)}{\int_{\bT^d}\rhob(t,\varrho_1;s,y')\rhof(s,y';0,\varrho_2)dy'}=:\beta \rho_{\mathrm{m}}(t;s,y),
\end{aligned}
\end{equation}
where $\varrho_1,\varrho_2$ are sampled independently from the
invariant measure of the $\rho$ process, which are also independent of
the noise $\xi$. The error in the above approximation is bounded from
above by 
$e^{-\lambda s}+e^{-\lambda(t-s)}$, up to a multiplicative constant,  in light of \eqref{051705-23}. The function
$\rho_{\mathrm{m}}(t;s,\cdot)$ can be interpreted as the midpoint
density of the directed polymer, with the starting and ending points
distributed according to $\varrho_1,\varrho_2$ respectively. Since
$\varrho_1,\varrho_2$ are sampled  independently
from the invariant measure and the forward and backward polymer
density processes are based on the noises  in the time intervals
$[0,s]$ and  $[s,t]$ respectively, the  $C(\bT^d)-$valued random
variables $\rhof(s,y;0,\varrho_2)$ and $\rhob(t,\varrho_1;s,y)$ are independent. 
In addition,  for any $t\geq s\geq0$ and as $C(\bT^d)-$valued random
variables satisfy 
\[
\rhob(t,\varrho_1;s,y)\stackrel{\text{law}}{=} \varrho_1(y), \quad\quad \rhof(s,y;0,\varrho_2)\stackrel{\text{law}}{=} \varrho_2(y).
\]
Therefore,
\[
\rho_{\mathrm{m}}(t;s,y)\stackrel{\text{law}}{=}\frac{\varrho_1(y)\varrho_2(y)}{\int_{\bT^d}\varrho_1(y')\varrho_2(y')dy'}.
\]
Since there are multiple sources of randomnesses here, including the noise $\xi$ and the boundary conditions $\varrho_1,\varrho_2$, to deal with the conditional expectation $\EE [D_{s,y}h(t,x)|\F_s]$, one needs to average out different sources of randomnesses with some care. To avoid the confusion, in the following we use $\EE$ as the expectation only on $\xi$, and $\E_i$ as the expectation with respect to $\varrho_i$. 

First,  for any $t\geq s\geq0$, we have the identity in law
\begin{equation}\label{e.identitylaw}
\E_1\EE [\rho_{\mathrm{m}}(t;s,y)|\F_s]\stackrel{\text{law}}{=}\E_1\left[\frac{\varrho_1(y)\varrho_2(y)}{\int_{\bT^d}\varrho_1(y')\varrho_2(y')dy'} \right].
\end{equation}
This is because the noise is white in time, so $\EE[\cdot|\F_s]$ can be treated as the average of    the noise in the time interval $[s,t]$, and to compute $\E_1\EE [\rho_{\mathrm{m}}(t;s,y)|\F_s]$, it is equivalent with averaging out the factor $\rhob$ in \eqref{e.appmid}.

Secondly, by the Clark-Ocone formula and the approximation in \eqref{e.appmid}, we have
\[
h(t,x)-\EE h(t,x)\approx \beta \int_0^t\int_{\bT^d} \EE [\rho_{\mathrm{m}}(t;s,y)|\F_s]\xi(s,y)dyds,
\]
with the error of $O(1)$ in $L^2(\Omega)$. To replace the approximation ``$\approx$'' with an identity, one can consider a modified partition function in which both the starting and ending points of the directed polymer are at equilibrium. For further details, refer to \cite[Section 2]{ADYGTK22}, as well as the discussion in Section~\ref{s.corrector} below, particularly equation~\eqref{e.deadditive4}. Taking $\E_1$ on both sides, applying It\^o isometry and then taking $\E_2$, we further derive 
\[
\Var \Big[h(t,x)\Big]\approx \beta^2\int_0^t\int_{\bT^{2d}} \E_2 \EE \left[ \prod_{j=1}^2 \E_1 \EE [\rho_{\mathrm{m}}(t;s,y_j)|\F_s] \right] R(y_1-y_2)dy_1dy_2ds.
\]
By \eqref{e.identitylaw}, the r.h.s. equals to 
\[
\begin{aligned}
&\beta^2\int_0^t\int_{\bT^{2d}} \E_2   \left[ \prod_{j=1}^2 \E_1\left[\frac{\varrho_1(y_j)\varrho_2(y_j)}{\int_{\bT^d}\varrho_1(y')\varrho_2(y')dy'} \right]\right] R(y_1-y_2)dy_1dy_2ds\\
&=\beta^2 t \int_{\bT^{2d}} \E_2   \left[ \prod_{j=1}^2 \E_1\left[\frac{\varrho_1(y_j)\varrho_2(y_j)}{\int_{\bT^d}\varrho_1(y')\varrho_2(y')dy'} \right]\right] R(y_1-y_2)dy_1dy_2.
\end{aligned}
\]
%
Since one can write $\varrho_i$ as $\varrho_i(\cdot)=e^{H_i(\cdot)}/\int_{\bT^d} e^{H_i} $, with $H_i$ sampled from the invariant measure for the KPZ equation, we derive the variance formula in \eqref{e.exvar}.

The above discussion is almost a complete proof of \eqref{e.exvar},
except that one needs to justify the approximation rigorously. Given
\eqref{051705-23} and \eqref{e.mmbdrho}, this is fairly
straightforward in the region of $1\leq s\leq t-1$. For
$s\in[0,1]\cup[t-1,t]$, one can analyze it separately.

 To  
 prove the    Gaussian fluctuations from the approximation  
\begin{equation}\label{e.deadditive1}
\  h(t,x)- \EE h(t,x)\approx \beta \int_0^t\int_{\bT^d} \E_1\EE [\rho_{\mathrm{m}}(t;s,y)|\F_s]\xi(s,y)dyds,
\end{equation}
it is not hard to notice that the r.h.s. is actually a martingale of the form \eqref{e.generalmartingale}, thus, one can apply the martingale central limit theorem and the Birkhoff’s Ergodic Theorem to complete the proof.

To summarize, in the periodic setting, it is relatively straightforward to apply the Clark-Ocone formula to study the height fluctuations, leveraging the fast mixing the projective process as established by  Theorem~\ref{t.ofos}. The same proof, along with the mixing result from \cite{Pa22}, extends to the open KPZ equation, leading to a central limit theorem with a variance formula similar to \eqref{e.exvar}. The effectiveness of   this argument stems from the special structure of the Malliavin derivative $D_{s,y}h(t,x)$ as in \eqref{e.madeh}, which allows  it to be  approximated by the stationary midpoint density $\rho_{\mathrm{m}}(t;s,y)$ in the large time limit.

\begin{remark}
 
 It is worth mentioning that the KPZ type problems exhibit a
 phenomenon known as super-concentration \cite{Cha14}, with the
 variance of  the quantity  of interest growing sublinearly with respect to the size of the system. Standard concentration estimates, such as the Efron-Stein or Gaussian-Poincar\'e inequalities, typically yield variance bounds that are linear in the size of the system. From a technical point of view, one can  perceive the variance identity \eqref{e.varhco} as a strict improvement of 
 the Gaussian-Poincar\'e inequality. How to use it in other context
 seems to be a question  of great interest.
 \end{remark}

\section{Fluctuations of the winding number}
\label{s.winding}

Another physically interesting quantity in the periodic setting is the winding number of the directed polymer on a cylinder, which refers to the algebraic number of turns the polymer path makes around the cylinder. This problem is equivalent to studying the fluctuations of the endpoint of a directed polymer in a random and spatially periodic environment. We are particularly interested in the statistical fluctuations of the winding number for a very long polymer path. The main result in this section is a central limit theorem for this quantity. In the case of 
 $1+1$ spacetime white noise, we derive an explicit expression for the effective diffusivity, akin to \eqref{e.exvarwhite}. To simplify the presentation, throughout this section, we assume the length of the torus $L=1$. The result for general $L$ can be obtained through scaling.
 
 To state the result, we need to introduce more notations. Recall that $Z$ solves the SHE on $\R_+\times \bT^d$ with the noise $\xi$. To define the winding number properly, we consider a periodic extension of $\xi$ so that it is defined on $\R_+\times \R^d$, and let $\sfZ$ be the solution to 
\begin{equation}\label{e.defsfZ}
\begin{aligned}
\partial_t \sfZ(t,x)&=\frac12\Delta \sfZ(t,x)+\beta \xi(t,x) \sfZ(t,x), \quad\quad t>0, x\in\R^d,\\
\sfZ(0,x)&=\delta(x).
\end{aligned}
\end{equation}
The choice of the initial data $\delta(x)$ is by no means essential --
it leads to a point-to-line polymer measure, which is standard in the
literature. Compared to \eqref{e.she}, the main difference is that $Z$
is defined  there  on $\R_+\times \bT^d$, so that
one can think of  the initial data $Z(0,\cdot)$ as being automatically
periodic, while $\sfZ$ in \eqref{e.defsfZ} is defined on $\R_+\times \R^d$, with the non-periodic initial data. Using $\sfZ$, we can define the quenched endpoint density of the polymer (on $\R^d$ rather than $\bT^d$) as 
\begin{equation}\label{e.defsfp}
\sfp(t,x)=\frac{\sfZ(t,x)}{\int_{\R^d} \sfZ(t,x')dx'},\quad\quad t>0,x\in\R^d,
\end{equation}
and one can think of the winding number of the polymer path of length
$t$ as a random variable sampled from the (random)
   density $\sfp(t,\cdot)$, hence it inherits both the
randomness from $\xi$ and the sampling. Note that in   dimensions
$d\ge 2$, the winding number  is actually not
a number but a vector, so, to be more precise, we are studying the
displacement of the polymer endpoint, and with some abuse of
terminology,  we  refer to it as the winding number. 

By the formal use of the Feynman-Kac formula,   one can write $\sfp$ as 
\[
\sfp(t,x)=\frac{\E_B \exp(\beta\int_0^t\xi(s,B_s)ds)\delta(B_t-x)}{\E_B \exp(\beta\int_0^t\xi(s,B_s)ds)}
\]
where $B$ is a standard Brownian motion starting from the origin.

 One should note the difference between the $\sfp(t,x)$ above and the $\rho(t,x)$ defined in \eqref{e.rho} before. Suppose the initial data of $Z(0,x)=Z_0(x)$ is chosen as the Dirac function on $\bT^d$, then $Z(t,\cdot)$ can be viewed as the periodization of $\sfZ(t,\cdot)$, and hence $\rho$ is the periodization of $\sfp$:
\[
\rho(t,x)=\sum_{j\in \Z^d} \sfp(t,x+j).
\]

Now we can state the main result \cite[Theorem 1.1]{YGTK22} and \cite[Theorem 1.1]{YGTK23}:
\begin{theorem}\label{t.winding}
The polymer endpoint satisfies a central limit theorem in the annealed sense: there exists a positive definite symmetric matrix  $\Sigma(\beta)$ such that for any $f\in C_b(\R^d)$, we have
 \begin{equation}\label{e.cltwinding}
\EE \int_{\R^d} f(\frac{x}{\sqrt{t}}) \sfp(t, x)dx\to \int_{\R^d} f(x)\Phi(x)dx,
\end{equation}
as $t\to\infty$. Here
$$
\Phi(x)=\frac{1}{(2\pi)^{d/2}\sqrt{|\Sigma(\beta)|}}\exp\left\{-\frac12x^T\cdot
\Sigma^{-1}(\beta)x\right\}
$$
is the density of a $d$-dimensional centered Gaussian vector whose
covariance matrix equals $\Sigma(\beta)$. 

In the  case of $1+1$ spacetime white noise, $\Sigma(\beta)$ admits the following representation
\begin{equation}\label{e.exvarwinding}
\Sigma(\beta)=1+\beta^2\E_{W_1}\E_{W_3}\big[\mathcal{A}(\beta,W_1,W_3)^2\big],
\end{equation}
where
\[
 \mathcal{A}(\beta,W_1,W_3)
    =\int_{\bT^2} \Xi(\beta,y,W_1)
    \left(\frac{e^{\beta W_1(z)+\beta W_3(z)}}{\int_{\bT}
    e^{\beta W_1(z')+\beta
      W_3(z')}dz'}-1\right)1_{[0,y]}(z)dydz,
      \]
      with
      \[
\Xi(\beta,y,W_1)=\E_{W_2}\left[\frac{ e^{\beta W_2(y) -\beta
    W_1(y)}}{\Big(\int_{\bT}e^{\beta W_2(y') -\beta W_1(y')}dy'\Big)^2}\right].
\]
Here $\{W_i\}_{i=1,2,3}$ are   independent copies of standard Brownian
bridges connecting $(0,0)$ with $(1,0)$ and $\E_{W_i}$ is the
expectation performed only on $W_i$.
\end{theorem}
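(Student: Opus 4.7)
The plan is to reduce the annealed CLT to the asymptotics of an annealed characteristic function on the torus, after which the homogenization machinery of Section~\ref{s.homogenization} and the Clark--Ocone identity of Section~\ref{s.clarkocone} become directly applicable. For $\lambda\in\CC^d$, introduce the \emph{twisted periodization}
\[
g^\lambda(t,x)\,:=\,\sum_{k\in\bbZ^d} e^{\lambda\cdot(x+k)}\,\sfZ(t,x+k),\qquad x\in\bT^d.
\]
Since $\xi$ is spatially periodic, $g^\lambda$ is well-defined and $\bT^d$-periodic in $x$, has the same $\delta_0$ initial data as $Z$ from~\eqref{e.she}, and satisfies the tilted SHE
\[
\partial_t g^\lambda \,=\, \tfrac12\Delta g^\lambda - \lambda\cdot\nabla g^\lambda + \tfrac{\lambda\cdot\lambda}{2}\,g^\lambda + \beta\,\xi\, g^\lambda.
\]
Unfolding the $x$-integral yields the key identity
\[
\int_{\R^d} e^{\lambda\cdot x}\,\sfp(t,x)\,dx\,=\,\bar g^\lambda(t)\big/\bar Z(t),\qquad \bar g^\lambda(t):=\int_{\bT^d} g^\lambda(t,x)\,dx,
\]
with $\bar Z(t)$ the total mass of the periodic SHE in~\eqref{e.defbarZ}. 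Setting $\lambda=i\theta/\sqrt t$ and invoking L\'evy's continuity theorem reduces~\eqref{e.cltwinding} to showing $\EE[\bar g^{i\theta/\sqrt t}(t)/\bar Z(t)]\to e^{-\frac12\theta^T\Sigma(\beta)\theta}$.

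Next, It\^o's formula applied to $\log\bar g^\lambda$ and $\log\bar Z$, in analogy with~\eqref{e.semilogZ}, gives the semimartingale decomposition
\[
\log\frac{\bar g^\lambda(t)}{\bar Z(t)}\,=\,\tfrac{\lambda\cdot\lambda}{2}\,t\,+\,\beta\!\int_0^t\!\!\int_{\bT^d}(\rho^\lambda-\rho)\,\xi(s,y)\,dyds\,-\,\tfrac{\beta^2}{2}\!\int_0^t [\mathcal R(\rho^\lambda)-\mathcal R(\rho)]\,ds,
\]
where $\rho^\lambda:=g^\lambda/\bar g^\lambda$ is a complex-valued twisted projective process with $\rho^0=\rho$. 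Formally expanding $\rho^\lambda=\rho+\lambda\cdot\eta+O(|\lambda|^2)$, the correction $\eta$ is a $\CC^d$-valued function on $\bT^d$ solving a linear SPDE driven by $\rho$ and inheriting exponential mixing from Theorem~\ref{t.ofos}. With $\lambda=i\theta/\sqrt t$, each term on the right is $O(1)$: the deterministic piece equals $-\tfrac{|\theta|^2}{2}$, the leading martingale rescales to $\tfrac{i\theta\beta}{\sqrt t}\int_0^t\!\int\eta\,\xi\,dyds$, and the bracket term produces an imaginary linear piece (vanishing by symmetry) plus a real quadratic-in-$\theta$ additive functional of $(\rho,\eta)$.

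The CLT~\eqref{e.cltwinding} then follows from the homogenization argument of Section~\ref{s.homogenization}: the additive functionals above are well-approximated by martingales via the Poisson equation~\eqref{e.poisson}, and the sum of all martingale contributions converges by the martingale CLT and Birkhoff's theorem for $\pi_\infty$ to a Gaussian with variance $\theta^T\Sigma(\beta)\theta$, positive definite by a Fourier lower bound analogous to~\eqref{e.varlowbd}. For the explicit formula in the $1+1$ white noise case, apply the Clark--Ocone identity of Section~\ref{s.clarkocone} directly, isolating the coefficient of $\theta^2$ in $\log(\bar g^\lambda/\bar Z)$. The Malliavin derivatives $D_{s,y}\bar g^\lambda$ and $D_{s,y}\bar Z$ are explicit ratios of forward/backward propagators (cf.~\eqref{e.madeh}); letting $t\to\infty$, the OFOS principle of Theorem~\ref{t.ofos} decouples the forward pieces, carrying noise on $[0,s]$, from the backward pieces on $[s,t]$, each becoming an independent sample of $\pi_\infty$. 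By~\eqref{rho-i}, these samples are of the form $e^{\beta W_i}/\int e^{\beta W_i}$ for independent Brownian bridges, and careful tracking of three such bridges reproduces~\eqref{e.exvarwinding}, with the constant $1$ originating from the deterministic $\tfrac{\lambda\cdot\lambda}{2}t$ and the $\beta^2\E_{W_1}\E_{W_3}[\mathcal A^2]$ from the polymer drift.

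The main obstacle is to justify the expansion $\rho^\lambda=\rho+\lambda\cdot\eta+O(|\lambda|^2)$ uniformly in $t$ when $|\lambda|\asymp t^{-1/2}$: the corrector for the $\lambda$-dependent dynamics must be built with quantitative bounds, and the complex-valued nature of $\rho^\lambda$ for imaginary $\lambda$ obstructs direct positivity arguments. These issues are addressed by analytic continuation from a real-$\lambda$ regime together with the exponential mixing of Theorem~\ref{t.ofos}. A secondary challenge is the combinatorial bookkeeping required in the Clark--Ocone step to extract the specific integrand structure of $\mathcal A(\beta,W_1,W_3)$ and $\Xi(\beta,y,W_1)$.
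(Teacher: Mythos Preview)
Your route to the CLT~\eqref{e.cltwinding} via the twisted periodization $g^\lambda$ is genuinely different from the paper's. The paper (Section~\ref{s.sumiid}) instead builds a Markov-chain representation of the winding number, writes the total displacement as $Y_{N+1}=\sum_k\eta_k$ with $\eta_k$ the winding accumulated on $[k-1,k]$, checks a $\rho$-mixing condition for the stationary sequence $\{\eta_k\}$ using Theorem~\ref{t.ofos}, and invokes a classical CLT for mixing sequences. Your characteristic-function approach is plausible in principle, but the expansion $\rho^\lambda=\rho+\lambda\cdot\eta+O(|\lambda|^2)$ uniformly on $[0,t]$ for complex $\lambda$ is a real difficulty that you do not resolve; ``analytic continuation from a real-$\lambda$ regime'' is not a proof. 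Note also that positive definiteness of $\Sigma(\beta)$ in the paper comes from the shear invariance identity~\eqref{e.positivedefinite}, which gives $\Sigma(\beta)\ge I_d$ directly; the Fourier lower bound of~\eqref{e.varlowbd} that you invoke is for the \emph{height} variance and does not transfer here.

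The serious gap is in the derivation of the explicit formula~\eqref{e.exvarwinding}. Your sketch (``OFOS decouples forward from backward, three Brownian bridges appear'') misses the mechanism entirely. In the paper's argument (Section~\ref{s.timereversal}), the Clark--Ocone representation of $X_t$ produces the integrand~\eqref{e.i1minusi2}, involving both the midpoint density $\rho_{\mathrm m}$ \emph{and} the quenched winding mean $\psi(s,y)$ defined in~\eqref{e.defW}. The midpoint density stabilizes by OFOS, but $\psi(s,y')-\psi(s,y)$ does not: it is not a polymer endpoint density, and its limit is \emph{not} obtained by decoupling forward/backward propagators. The paper instead identifies $\nabla\psi$ with the solution $g$ of the Fokker--Planck equation~\eqref{e.fopde} for a diffusion in the Burgers drift $u$, and then uses the time-reversal skew symmetry~\eqref{e.burgersAsym} of the stationary stochastic Burgers equation---a fact specific to the $1+1$ white-noise case---to compute the joint law of $(g(s,\cdot),u(s,\cdot))$ via~\eqref{e.jointgu1}. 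This is where the third Brownian bridge enters and where the structure of $\mathcal A$ and $\Xi$ comes from. None of this is captured by ``careful tracking of three such bridges''; without the time-reversal step your proposal has no path to~\eqref{e.exvarwinding}.
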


To get a sense why the central limit theorem holds for the endpoint displacement in the periodic setting, we consider  the formal partition function of the polymer measure
\[
\E_B \exp\left\{\beta\int_0^N\xi(s,B_s)ds\right\}.
\]
Here, to simplify we have assumed that $t=N$ is a positive integer. Since $\xi$ is spatially periodic, one can view $B$ either as a path on $\R^d$ or $\bT^d$. For our purpose, because we are interested in the endpoint displacement, we choose $\R^d$. For each integer $k\geq0 $, let $w_k=B_k-\lfloor B_k\rfloor$ so that it takes values on $\bT^d$ and records the position of the polymer path on the cylinder  when viewed from  the second perspective. Due to the spatial periodicity,    the random variables $\{w_k\}$ mix exponentially fast, as a consequence of  Theorem~\ref{t.ofos}. Additionally, since $\xi$ is white in time, conditioning on the positions of all $w_k$, the winding numbers accumulated in each time interval $[k-1,k]$, denoted by $\eta_k$, are independent. Therefore, to study the endpoint displacement, or equivalently, the total winding number $\sum_{k=1}^N \eta_k$, the correlation among the summands only comes from those of $\{w_k\}$, and we have the sum of a sequence of weakly dependent random variables, which is expected to satisfy the central limit theorem.

The above heuristics are likely the most straightforward to verify
rigorously, and we will provide more details in Section~\ref{s.sumiid}
below. Ultimately, the endpoint displacement is approximated by the
sum of a stationary sequence of weakly dependent random variables,
each representing the winding number of the polymer path accumulated
in a  sub-interval. To conclude the central limit theorem,  it is
enough to check the standard $\rho-$mixing condition \cite[Section 19]{bil}. In this case, the limiting variance    is naturally expressed in terms of the sum of covariance, see \eqref{e.sumco} below. A similar argument can be found in a last passage percolation model \cite{bakhtin1}.

Since the polymer endpoint is expected to be super-diffusive in the
non-periodic setting (see \cite[Theorem 1.11]{CH16} for the case of
$1+1$ spacetime white noise), it is natural to ask if we can study the
asymptotics of $\Sigma(\beta)$ as $L$ - the torus length - tends to $\infty$. The effective diffusivity presumably diverges, with the diverging speed hinting on  the super-diffusive exponent. From the sum of covariance formula for $\Sigma(\beta)$, it seems unlikely that any proper bound on $\Sigma(\beta)$ could be obtained. This motivated us to develop a different proof of the central limit theorem, leading to the explicit diffusivity in \eqref{e.exvarwinding} for the case of $1+1$ spacetime white noise. Unlike \eqref{e.exvar}, the formula $\Sigma(\beta)$ does not extend to the smooth noise case, and we will explain later why our proof is restricted to the white noise.  The proof again relies on the Clark-Ocone formula, as in Section~\ref{s.clarkocone}, but in this case, the calculation of the Malliavin derivative and the subsequent analysis is significantly more complicated. We present the main ideas of the proof in Section~\ref{s.timereversal} below.

In the following two sections, we present the proof of \eqref{e.cltwinding} and \eqref{e.exvarwinding} respectively.

\subsection{Sum of weakly dependent random variables}
\label{s.sumiid}

In this section, we present a proof of \eqref{e.cltwinding} by a
standard argument used for sum of weakly dependent random
variables.  The argument relies on a Markov chain representation of the winding number and the fast mixing of the projective process established in Theorem~\ref{t.ofos}.

Recall that we are interested in studying the quenched density $\sfp$ defined in \eqref{e.defsfp}, which is proportional to $\sfZ$. Our first step is to rewrite $\sfZ$ by constructing a Markov chain representing the winding number of the polymer path accumulated in each sub-interval. The construction is standard and can be found, for example, in \cite{Sin91}. However, it was very helpful for us to  see the underlying structure, so we present it here.

\subsubsection{Markov chain for winding number}
We need another notation: let $\cZ$ be the Green's function of SHE on $\R_+\times\R^d$, 
\begin{equation}\label{e.defGreenZ}
\begin{aligned}
&\partial_t \cZ_{t,s}(x,y)=\frac12\Delta_x \cZ_{t,s}(x,y)+\beta \cZ_{t,s}(x,y)\xi(t,x), \quad\quad t>s,x\in\R^d,\\
&\cZ_{s,s}(x,y)=\delta(x-y), 
\end{aligned}
\end{equation}
thus, the periodic Green's function is 
\[
\G_{t,s}(x,y)=\sum_{j \in\Z^d} \cZ_{t,s}(x,y+j)=\sum_{j \in\Z^d} \cZ_{t,s}(x+j,y).
\]
With the above notation, we have $\sfZ(t,x)=\cZ_{t,0}(x,0)$. 

%

Fix any $t>0$ and denote $N=\lf t\rf$. Fix $y\in\R^d, y'\in\bT^d$. We
first write $y$ as $y=j_{N+1}+x_{N+1}$ for some $j_{N+1}\in \Z^d$ and
$x_{N+1}\in[0,1)^d$. Since $\cZ$ is the Green's function  of SHE, we have 
\[
\begin{aligned}
\cZ_{t,0}(y,y')&=\int_{\R^d}\cZ_{t,N}(j_{N+1}+x_{N+1},x)\cZ_{N,0}(x,y')dx\\
&=\sum_{j_N\in\Z^d}\int_{\bT^d} \cZ_{t,N}(j_{N+1}+x_{N+1},j_N+x_N)\cZ_{N,0}(j_N+x_N,y')dx_N.
\end{aligned}
\]
Iterating the above relation, we obtain
\[
\begin{aligned}
\cZ_{t,0}(y,y')= \sum_{j_1,\ldots,j_N\in\Z^d}&\int_{\bT^{Nd}}\cZ_{t,N}(j_{N+1}+x_{N+1},j_N+x_N) \\
&\times \prod_{k=1}^N \cZ_{k,k-1}(j_k+x_k,j_{k-1}+x_{k-1})d\x_{1,N},
\end{aligned}
\]
where we used the simplified notation $d\x_{1,N}=dx_1\ldots dx_{N}$ and the convention 
$
j_0=0,x_0=y'.
$
 In other words,  we have decomposed   $\R^d=\cup_{j\in\Z^d} \{j+[0,1)^d\}$, then integrated in each interval and summed  them up. One should think of the variable $j_k+x_k$ as representing the location of the polymer path at time $k$, with $j_k$ the integer part and $x_k$ the fractional part.

Now we make use of the periodicity to note that 
\begin{equation}\label{e.peZG}
\begin{aligned}
&\sum_{j_k\in\Z^d}\cZ_{k,k-1}(j_k+x_k,j_{k-1}+x_{k-1})=\sum_{j_k\in\Z^d} \cZ_{k,k-1}(j_k-j_{k-1}+x_k,x_{k-1})\\
&=\sum_{j_k\in\Z^d} \cZ_{k,k-1}(j_k+x_k,x_{k-1})=\G_{k,k-1}(x_k,x_{k-1}).
\end{aligned}
\end{equation}
 Thus $\cZ_{t,0}$ can be rewritten as 
\begin{equation}\label{e.7151}
\begin{aligned}
\cZ_{t,0}(y,y')=\int_{\bT^{Nd}}  &\left(\sum_{j_1,\ldots,j_{N}\in\Z^d} \frac{\cZ_{t,N}(j_{N+1}+x_{N+1},j_N+x_N)\prod_{k=1}^N \cZ_{k,k-1}(j_k+x_k,j_{k-1}+x_{k-1})  }{\G_{t,N}(x_{N+1},x_N)\prod_{k=1}^N \G_{k,k-1}(x_k,x_{k-1}) }\right)\\
&\times \G_{t,N}(x_{N+1},x_N)\prod_{k=1}^N \G_{k,k-1}(x_k,x_{k-1})  dx_{1,N}.
\end{aligned}
\end{equation}

If we further sum over $j_{N+1}$, the term inside the parentheses in \eqref{e.7151} equals to $1$, and this inspires us to define a Markov chain as follows. Fix the realization of $\xi$ and  the vector
\begin{equation}\label{e.bx}
\x:=(x_0,\ldots,x_{N+1})\in \bT^{(N+2)d}.
\end{equation} 
We construct an integer-valued, time inhomogeneous Markov chain $\{Y_j\}_{j=1}^N$, with 
\begin{equation}\label{e.6174}
\begin{aligned}
& \Pb_\x[Y_1=j_1]= \frac{\cZ_{1,0}(j_1+x_1,x_0)}{\G_{1,0}(x_1,x_0)},\\
&\Pb_\x[Y_2=j_2|Y_1=j_1]= \frac{\cZ_{2,1}(j_2+x_2,j_1+x_1)}{\G_{2,1}(x_2,x_1)},\\
&\ldots\\
&\Pb_\x[Y_N=j_N|Y_{N-1}=j_{N-1}]=\frac{\cZ_{N,N-1}(j_N+x_N,j_{N-1}+x_{N-1})}{\G_{N,N-1}(x_N,x_{N-1})},\\
&\Pb_\x[Y_{N+1}=j_{N+1}|Y_{N}=j_{N}]=\frac{\cZ_{t,N}(j_{N+1}+x_{N+1},j_{N}+x_{N})}{\G_{t,N}(x_{N+1},x_{N})}.
\end{aligned}
\end{equation}
By \eqref{e.peZG} and \eqref{e.6174}, it is clear that $Y_{N+1}$ is a sum of independent random variables, for each fixed realization of the noise and $\x$. We rewrite it as 
\begin{equation}\label{e.defeta}
Y_{N+1}=\sum_{k=1}^{N+1} \eta_k, \quad\quad\mbox{ with } Y_0=0, \quad   \eta_k=Y_k-Y_{k-1},
\end{equation}
and one can interpret $\eta_k$ as the winding number accumulated during the time interval $[k-1,k]$ for $k=1,\ldots,N$, and $\eta_{N+1}$ corresponds to the winding number in $[N,t]$. 
We have 
\begin{equation}\label{e.laweta}
\begin{aligned}
&\Pb_\x[\eta_k=j]=\frac{\cZ_{k,k-1}(x_k+j,x_{k-1})}{\G_{k,k-1}(x_k,x_{k-1})}, 
\quad\quad j\in \Z^d, \quad k=1,\ldots,N,\\
&\Pb_\x[\eta_{N+1}=j]=\frac{\cZ_{t,N}(x_{N+1}+j,x_{N})}{\G_{t,N}(x_{N+1},x_{N})}, 
\quad\quad j\in \Z^d.
\end{aligned}
\end{equation}

With the Markov chain,  the summation in \eqref{e.7151} takes the simple form 
\[
\begin{aligned}
&\sum_{j_1,\ldots,j_{N}\in \Z^d} \frac{\cZ_{t,N}(j_{N+1}+x_{N+1},j_N+x_N)\prod_{k=1}^N \cZ_{k,k-1}(j_k+x_k,j_{k-1}+x_{k-1})  }{\G_{t,N}(x_{N+1},x_N)\prod_{k=1}^N \G_{k,k-1}(x_k,x_{k-1}) }\\
&=\Pb_{\mathbf{x}}[Y_{N+1}=j_{N+1}].
\end{aligned}
\]

Therefore, \eqref{e.7151} is rewritten as 
\begin{equation}\label{e.6172}
\begin{aligned}
\cZ_{t,0}(y,y')=\int_{\bT^{Nd}}\Pb_{\mathbf{x}}[Y_{N+1}=j_{N+1}] \G_{t,N}(x_{N+1},x_N)\prod_{k=1}^N \G_{k,k-1}(x_k,x_{k-1})    d\x_{1,N},
\end{aligned}
\end{equation}
and the quenched density is 
\begin{equation}\label{e.7301}
\begin{aligned}
\sfp(t,y)&=\frac{\cZ_{t,0}(y,0)}{\int_{\R^d} \cZ_{t,0}(y',0)dy'}\\
&=\frac{\int_{\bT^{Nd}}\Pb_{\mathbf{x}}[Y_{N+1}=j_{N+1}] \G_{t,N}(x_{N+1},x_N)\prod_{k=1}^N \G_{k,k-1}(x_k,x_{k-1})    d\x_{1,N}}{\int_{\bT^{(N+1)d}}\G_{t,N}(x_{N+1},x_N)\prod_{k=1}^N \G_{k,k-1}(x_k,x_{k-1})    d\x_{1,N+1}},
\end{aligned}
\end{equation}
with $x_0=0$ and $y=j_{N+1}+x_{N+1}$. 

In other words, to sample  from $\sfp(t,\cdot)$, there are two steps:

(i)  sample $w_1,\ldots,w_{N+1}$ from a density (on $\bT^{(N+1)d}$) proportional to 
\[
\G_{t,N}(x_{N+1},x_N)\prod_{k=1}^N \G_{k,k-1}(x_k,x_{k-1});
\]

(ii) given the values of $w_1=x_1,\ldots,w_{N+1}=x_{N+1}$, we run the Markov chain defined in \eqref{e.6174} and obtain a sample of $Y_{N+1}$. 

(iii) the random variable $Y_{N+1}+w_{N+1}$ is distributed with the
density $\sfp(t,\cdot)$.  

Our goal is to show a central limit theorem for
$\frac{Y_{N+1}+w_{N+1}}{\sqrt{t}}$ as $t\to\infty$
($N=\floor{t}$). By the fact that $w_{N+1}\in [0,1)^d$,  it suffices to consider $\frac{Y_{N+1}}{\sqrt{t}}$, so we can actually integrate out the variable $x_{N+1}$ in \eqref{e.7301}. 

Let us introduce some more notation.    For any $\nu_1,\nu_2\in \mathcal{M}_1(\bT^d)$, define $\mu_t(d\x;\nu_1,\nu_2)$ as the probability on $\bT^{(N+2)d}$ such that 
\[
\mu_t(d\x;\nu_1,\nu_2)=\frac{\nu_1(dx_{N+1})\G_{t,N}(x_{N+1},x_N)\prod_{k=1}^N \G_{k,k-1}(x_k,x_{k-1})\nu_2(dx_0)d\x_{1,N}}{C_t(\nu_1,\nu_2)},
\]
with $C_t(\nu_1,\nu_2)$   the normalization constant. One should view $\mu_t$ as the quenched density of the polymer path on the cylinder at integer time points, and the distributions at the boundaries are given by $\nu_1,\nu_2$ respectively.

In this way, the random variable $Y_{N+1}$ obtained through (i)-(ii) has the distribution given by (we use $\mathsf{P}$ to denote the quenched probability)
\begin{equation}\label{e.qpwinding}
\mathsf{P}[Y_{N+1}=j_{N+1}]=\int_{\bT^{(N+2)d}}\Pb_{\mathbf{x}}[Y_{N+1}=j_{N+1}]\mu_t(d\x;\mathrm{m},\delta_0),
\end{equation}
where $\mathrm{m}$ is the Lebesgue measure on $\bT^d$ and $\delta_0$ is the Dirac mass on $\bT^d$ at the origin, which is the starting point of our analysis.


\subsubsection{Approximation and $\rho-$mixing}

To make use of the representation of the quenched probability in \eqref{e.qpwinding}, the key is to realize that  the two distributions $\mathrm{m},\delta_0$ at the boundaries  play no role in large time, which turns out to be a consequence of Theorem~\ref{t.ofos}. 

As a matter of fact, with any distributions $\nu_1,\nu_2$ imposed at the boundaries, suppose we are interested in the marginal distribution of $x_k$ with some $k$ in the middle, then  after integrating out all other variables, we obtain the marginal density of $x_k$ as
\[
\begin{aligned}
&\int_{\bT^{(N+1)d}} \frac{\nu_1(dx_{N+1})\G_{t,N}(x_{N+1},x_N)\prod_{k=1}^N \G_{k,k-1}(x_k,x_{k-1})\nu_2(dx_0)d\x_{1,k-1}d\x_{k+1,N}}{C_t(\nu_1,\nu_2)}\\
&=\frac{\rhob(t,\nu_1;k,x_k)\rhof(k,x_k;0,\nu_2)}{\int_{\bT^d}\rhob(t,\nu_1;k,x_k')\rhof(k,x_k';0,\nu_2) dx_k'},
\end{aligned}
\]
with $\rhob,\rhof$ defined in \eqref{e.forwardbackward}. If $t-k\gg1,
k\gg1$, then by \eqref{051705-23}, one can replace $\nu_1,\nu_2$ above
by arbitrary distributions with an exponentially small error. We are
interested in the total winding number
$Y_{N+1}=\sum_{k=1}^{N+1}\eta_k$, with the distribution of $\eta_k$
depending on the choice of $(x_{k-1},x_k)$, thus, different choices of
$\nu_1,\nu_2$ only affect the distributions of $\eta_k$ near the
boundaries, i.e., those $k$ either close to $1$, or
  $N$. To study $\frac{Y_{N+1}}{\sqrt{N+1}}$, those
$\eta_k$-s   should not contribute, as $N\to\infty$. In particular, we
can replace $\mu_t(d\x;\mathrm{m},\delta_0)$ by
$\mu_t(d\x;\varrho_1,\varrho_2)$ in \eqref{e.qpwinding}, with
$\varrho_1,\varrho_2$ sampled independently from the invariant measure
of the projective process, so that the polymer path on the cylinder
is ``at stationarity''. The induced error in the replacement can be
estimated using \eqref{051705-23} in a rather straightforward way, see
\cite[Section 3.2]{YGTK22} for details.

Now suppose $t=N+1$, we study \eqref{e.qpwinding} with $\mu_t(d\x;\mathrm{m},\delta_0)$ replaced by $\mu_t(d\x;\varrho_1,\varrho_2)$. Since the two endpoints of the directed polymer on the cylinder is at stationarity, one can  construct  
 a path measure to realize $\{\eta_k\}_k$ as a sequence
of stationary random variables, see \cite[Section 3.3]{YGTK22}. To
show the central limit theorem of
$\frac{1}{\sqrt{N+1}}\sum_{k=1}^{N+1}\eta_k$,  it is enough to check
whether $\{\eta_k\}_k$  satisfies a
suitable mixing condition. 

Again, the mixing of $\{\eta_k\}_k$ comes from the respective
  property  of the projective process.   To
illustrate the ideas, we consider $\eta_i$ and $\eta_j$, with
$i<j$. The correlation between $\eta_i$ and $\eta_j$ only comes from
the correlation between $(w_{i-1},w_i)$ and $(w_{j-1},w_j)$,
   which are sampled from the joint density
$\mu_t(d\x;\varrho_1,\varrho_2)$ (in step (i)). To see that the correlation is small
for $j-i\gg1$,  we consider $w_i,w_{j-1}$  as
an example. Integrating all other variables, the joint density of $w_i,w_{j-1}$ is proportional to 
\[
\rhob(t,\varrho_1;j-1,x_{j-1})\G_{j-1,i}(x_{j-1},x_i)\rhof(i,x_i;0,\varrho_2).
\]
Since
$\rhob(t,\varrho_1;j-1,x_{j-1})$ and $\rhof(i,x_i;0,\varrho_2)$ are independent, the factor $\G_{j-1,i}(x_{j-1},x_i)$ encodes  the correlation, and one may expect that, with $j-i\gg1$, it approximately factorizes into the product of  functions of $x_{j-1}$ and $x_i$.  Indeed, take the midpoint $k=\lf \frac{i+j}{2}\rf$, then the above expression can be rewritten as 
\[
\begin{aligned}
&\int_{\bT^d}\rhob(t,\varrho_1;j-1,x_{j-1})\G_{j-1,k}(x_{j-1},x_k)\G_{k,i}(x_k,x_i)\rhof(i,x_i;0,\varrho_2)dx_k\\
&=\int_{\bT^d}\rhob(t,\varrho_1;j-1,x_{j-1})\rhof(j-1,x_{j-1};k,x_k)\rhob(k,x_k;i,x_i)\rhof(i,x_i;0,\varrho_2) F(x_k)dx_k.
\end{aligned}
\]
Here $F$ is some function we do not need to write explicitly. With  $j-1-k\gg1$ and $k-i\gg1$, applying  \eqref{051705-23} again, one can replace $\rhof(j-1,x_{j-1};k,x_k), \rhob(k,x_k;i,x_i)$ by $\rhof(j-1,x_{j-1};k,\varrho_3), \rhob(k,\varrho_4;i,x_i)$ respectively, with an exponentially small error, where $\varrho_3,\varrho_4$ are sampled independently from the invariant measure. Thus, up to a small error, the joint density of $x_i,x_{j-1}$ is proportional to 
\[
\rhob(t,\varrho_1;j-1,x_{j-1})\rhof(j-1,x_{j-1};k,\varrho_3)\rhob(k,\varrho_4;i,x_i)\rhof(i,x_i;0,\varrho_2).
\]
Up to a normalization, this is precisely the product of the marginal densities of $w_i$ and $w_{j-1}$. Thus, the above discussion suggests that, for $j-i\gg1$, the joint density of $x_i,x_{j-1}$ can be approximated by the product of the marginal densities.

The heuristic argument was made precise in \cite[Section
3.4]{YGTK22}. In particular, the $\rho-$mixing coefficient was shown
to decay exponentially fast \cite[Proposition 3.7]{YGTK22}. By a
standard argument based on \cite[Theorem 19.2]{bil}, this implies the central limit theorem for $\frac{1}{\sqrt{N+1}}Y_{N+1}=\frac{1}{\sqrt{N+1}}\sum_{k=1}^{N+1}\eta_k$, with the limiting covariance matrix taking the form 
\begin{equation}\label{e.sumco}
\Sigma(\beta)= \sum_{k\in\Z} \EE \Big[\,\eta_0\otimes \eta_k\Big].
\end{equation}

For the central limit theorem of sum of stationary random variables satisfying the $\rho-$mixing condition, it is usually a separate issue to verify that the limiting covariance matrix is strictly positive definite. In our case, it comes from the fact that
\begin{equation}\label{e.positivedefinite}
\EE \left(\int_{\R^d}x_ix_j \sfp(t,x)dx-\int_{\R^d} x_i\sfp(t,x)dx\int_{\R^d} x_j\sfp(t,x)dx\right) =\delta_{ij}t.
\end{equation}
It results from the  shear invariance  
of the noise, namely the fact that
$\{\xi(t,x)\}_{t,x}\stackrel{\text{law}}{=}\{\xi(t,x+\theta
t)\}_{t,x}$ for any $\theta\in\R^d$. As a result  
\begin{equation}
\EE \log \int_{\R^d} e^{\theta \cdot x} \sfp(t,x)dx=\frac12|\theta|^2 t+\EE \log \int_{\R^d}  \sfp(t,x)dx,\quad\quad\theta\in\R^d.
\end{equation}
 Taking the second
derivative in $\theta$ at $\theta=0$ we conclude \eqref{e.positivedefinite}.

From \eqref{e.positivedefinite}, we derive that for any $\theta\in\R^d$, 
\[
\EE \int_{\R^d}|\theta \cdot x|^2 \sfp(t,x)dx \geq \EE \int_{\R^d}|\theta \cdot x|^2 \sfp(t,x)dx-\EE (\int_{\R^d} \theta\cdot x \sfp(t,x)dx)^2  \geq |\theta|^2t,
\]
which implies that $\Sigma(\beta)\geq I_d$, where $I_d$ is the
$d\times d$ identity matrix. For a complete proof of this result in
$d=1$, we refer to \cite[Section 5]{YGTK22}. The multidimensional case is treated in the same way.

\subsection{Formula for the  diffusivity}
\label{s.timereversal}
 
The  formula \eqref{e.sumco} is hard to analyze quantitatively, much like the homogenization constant in Section~\ref{s.homogenization}, which is expressed in terms of the corrector that solves an abstract Poisson equation \eqref{e.sigmaex1}. In both cases, the effective diffusivities depend on the temporal correlation, a well-known fact. The dependence is explicit in \eqref{e.sumco} since $k$ is the time variable, while in \eqref{e.sigmaex1}, it is the gradient of the corrector $\mathcal{D}\chi$ that encodes the temporal correlation.

For the $1+1$ spacetime white noise, the effective diffusivity
$\Sigma(\beta)$    admits an expression only in terms
of the underlying invariant measure \eqref{e.exvarwinding}. The goal of this section is to  sketch the proof of this result.

To study the total variance, one can write it as 
\[
\begin{aligned}
\EE \int_{\R} x^2 \sfp(t,x)dx=&\EE \left(\int_{\R} x\sfp(t,x)dx\right)^2+ \EE \left(\int_{\R}x^2\sfp(t,x)dx-(\int_{\R} x\sfp(t,x)dx)^2 \right).
\end{aligned}
\]
The first term on the r.h.s. is the variance of the quenched mean,
while the second term is the mean of the quenched variance, which
equals to $t$ by \eqref{e.positivedefinite}. Therefore, the problem
reduces to studying the varianceof the quenched average of the
endpoint   of the polymer 
\[
X_t:=\int_{\R} x\sfp(t,x)dx.
\]
 We have  \begin{equation}
  \label{011308-24}
\lim_{t\to\infty}\frac{1}{t}\Var[ X_t]= \Sigma(\beta)-1,
\end{equation}
with $\Sigma(\beta)$ given by \eqref{e.exvarwinding}.

\subsubsection{It\^o integral representation}
As in the study of the height function $h$, one may want to apply  the Clark-Ocone formula to rewrite the quenched mean as (note $\EE X_t=0$ by symmetry)
\[
X_t=\int_0^t\int_{\bT} \EE[D_{s,y}X_t|\F_s]\xi(s,y)dyds.
\]
Here we view $X_t$ as a functional of the noise $\xi$ on $\R_+\times
\bT$, and $D_{s,y}$ is the Malliavin derivative associated with
it. Suppose we can compute $\EE[D_{s,y}X_t|\F_s]$ and understand its
asymptotic behavior for $s,t-s\gg1$, then
there is a chance to obtain a new expression for the left hand side of
\eqref{011308-24}, more explicit than \eqref{e.sumco}.

We might attempt to directly compute $D_{s,y}X_t=\int_{\R}x D_{s,y}\sfp(t,x)dx$, but analyzing the conditional expectation $\EE[D_{s,y}X_t|\F_s]$ does not appear straightforward. Instead, we employ the following trick: define the tilted partition function
\[
\mathcal{E}_\theta(t)=\int_{\R} e^{\theta x} \sfZ(t,x)dx,\quad\quad \theta\in\R,
\]
so the quenched mean can be expressed  as $X_t=\partial_\theta\log \mathcal{E}_\theta(t)\big|_{\theta=0}$. 

One can view $\log \mathcal{E}_\theta(t)$  as a tilted height function, and the idea is to first write down the Clark-Ocone representation of $\log  \mathcal{E}_\theta(t)$, which shares some similarity with the one for $h(t,x)$ in Section~\ref{s.clarkocone}, except for the difference arising from the tilt, then we take the derivative with respect to $\theta$ on both sides to obtain the It\^o integral representation for $X_t$.

It turns out to be a complicated calculation. For the details, we refer to \cite[Eq. (2.18), Lemma 2.3]{YGTK23}. The It\^o integral representation of $X_t$ takes  the form 
\begin{equation}\label{e.coXt}
X_t=\beta \int_0^t\int_{\bT} \EE[I_t(s,y)|\F_s] \xi(s,y)dyds,
\end{equation}
with 
\begin{equation}\label{e.i1minusi2}
I_t(s,y)=\rho_{\rm m}(t,-|s,y)\int_{\bT}[\psi(s,y')-\psi(s,y)]\rho_{\rm m}(t,-|s,y')dy',
\end{equation}
where $\rho_{\rm m}$ is the  midpoint density (at time $s$)  of the
point-to-line directed polymer of length $t$  on the cylinder: with $\G_{t,s}(-,y):=\int_{\bT} \G_{t,s}(x,y)dx$, we have
%
\begin{equation}\label{e.defrho}
\rho_{\rm m}(t,-|s,y)=\frac{ \G_{t,s}(-,y) \G_{s,0}(y,0)}{\G_{t,0}(-,0)},
\end{equation}
and $\psi$ can be interpreted as the quenched average of the winding number    of the point-to-point polymer path on the cylinder, starting at $(s,y)$ and ending at $(0,0)$: 
\begin{equation}\label{e.defW}
\psi(s,y)=\frac{\sum_{n\in\Z} (n-y)\cZ_{s,0}(y,n)}{\G_{s,0}(y,0)}.
\end{equation}
Alternatively, if we view the polymer path as lying on the plane, then it starts from $(s,y)$ and runs backwards in time, with a terminal condition at time zero so that it is pinned at integer points, see \cite[Lemma 3.1]{YGTK23}.

For the height function, the Malliavin derivative is given by formula
\eqref{e.madeh},
from which one can directly derive its limit as both $s$ and
$t-s\to\infty$ using Theorem~\ref{t.ofos}. The expression of
$I_t(s,y)$ is more complicated, and it is unclear what should be the
limit of $I_{t}(s,y)$ as $s,t-s\to\infty$. There, two types of factors appear:   
the midpoint density  and the quenched average of the
winding number.  The midpoint density   can be dealt with using
Theorem~\ref{t.ofos} and we obtain that for $s \gg 1$ and $t-s\gg1$,  
\[
\rho_{\rm m}(t,-|s,y)\approx \frac{\rhob(t,\varrho_1;s,y)\rhof(s,y;0,\varrho_2)}{\int_{\bT}\rhob(t,\varrho_1;s,y')\rhof(s,y';0,\varrho_2)dy'}\stackrel{\text{law}}{=}\frac{e^{\beta W_1(y)+\beta W_2(y)}}{\int_0^1 e^{\beta W_1(y')+\beta W_2(y')}dy'},
\]
where $\varrho_1,\varrho_2$ are independently sampled from the
invariant measure of the projective process, and $W_1,W_2$ are two
independent standard Brownian bridges connecting $(0,0)$ and
$(1,0)$. The key difficulty lies in the study of
$\psi(s,y')-\psi(s,y)$, in particular, we need to determine the joint
distribution of $\{\psi(s,y')-\psi(s,y)\}_{y,y'\in\bT}$ and
$\{\rho_{\rm m}(t,-|s,y)\}_{y\in\bT}$ for $ s \gg 1$ and $t-s\gg1$.
%
%

\subsubsection{Diffusion in Burgers drift and time reversal}

To study $\psi(s,y')-\psi(s,y)$, or equivalently $\nabla \psi(s,y)$,
we first observe that $\psi(s,y)$ can be interpreted as the quenched
mean of the winding number of a polymer path on the cylinder,
starting from $(s,y)$, running backwards in time and ending at
$(0,0)$. Thus, one needs to examine the difference between the
quenched means of two polymer paths starting from different points. A
novelty of our argument is drawing the connection between the polymer
measure and a diffusion in a random environment, which allows the
quenched mean to be expressed in terms of the solution to the respective backward Kolmogorov equation. 

We first state a general fact which relates the polymer measure to a diffusion in a random environment. Consider the polymer path starting from $(t,x)$, running backwards in time, with a terminal condition $Z_0$: the formal partition function of the Gibbs measure takes the form
\[
Z(t,x)=\E_B e^{\beta\int_0^t \xi(t-s,x+B_s)ds}Z_0(x+B_t).
\]
One can show that $\{x+B_s\}_{s\in[0,t]}$ under the polymer measure has the same law as the   diffusion $\{\mathcal{X}_s\}_{s\in[0,t]}$ solving the following SDE
\begin{equation}\label{e.diff1}
\begin{aligned}
&d\mathcal{X}_s=u(t-s,\mathcal{X}_s)ds+dB_s,\quad\quad \mathcal{X}_0=x,\\
&u(s,y)=\nabla \log Z(s,y).
\end{aligned}
\end{equation}
%
%
In the smooth noise case, this fact can be proved by the Girsanov
theorem \cite[Lemma 4.2]{YGTK23}; in the white noise case, the drift
$u$ is distribution-valued, so it is a singular diffusion, which can
be made sense of by tools from rough path and singular SPDEs
\cite{CC18,DD16}. 

With the diffusion representation, the quenched mean of the polymer path can be represented by the solution to the  following backward Kolmogorov equation
\begin{equation}\label{e.bapde}
\begin{aligned}
&\partial_t \phi=\frac12\Delta\phi+u\nabla \phi+u,\\
&\phi(0,x)=0.
\end{aligned}
\end{equation}
As a matter of fact,  we have 
\[
\E_B \mathcal{X}_t=x+\phi(t,x).
\]
The above identity comes from a probabilistic representation: from
\eqref{e.bapde}, we see  the solution $\phi(t,x)$ equals to the
forcing $u$ integrated along the  random   characteristics, which is precisely the trajectory of $\mathcal{X}_s$, and this leads to the quenched mean. For the proof in the case of a smooth noise, see \cite[Lemma 4.3]{YGTK23}.

Here lies the crux of the argument.  Since $\psi$ itself is the quenched mean of the displacement, we have
\[
\nabla\psi(s,y)= \nabla\phi(s,y),
\]
with a properly chosen $Z_0$. Starting from the backward equation \eqref{e.bapde}, we derive that $g:=1+\nabla\phi$ satisfies a forward equation
\begin{equation}\label{e.fopde}
\begin{aligned}
&\partial_tg=\frac12\Delta g+\nabla (ug),\\
&g(0,x)=1.
\end{aligned}
\end{equation}
This is the Fokker-Planck equation for the diffusion
\begin{equation}\label{e.diff2}
d\mathcal{Y}_s=-u(s,\mathcal{Y}_s)ds+dB_s,    \quad\quad \cY_0\sim \mathrm{m}.
\end{equation}
The symbol  $X\sim\mu$ means that a random variable $X$ is distributed
according to $\mu$. Here $\mathrm{m}$ is the normalized Lebesgue
measure on $\bT$. In other
words,  studying $\nabla\psi$ is equivalent with considering $g$, which is the quenched density of $\mathcal{Y}$. On the other hand, from the expression of $\rho_{\rm m}$ in \eqref{e.defrho}, we can rewrite it as 
\[
\begin{aligned}
\rho_{\rm m}(t,-|s,y)=\frac{ \G_{t,s}(-,y) \G_{s,0}(y,0)}{\G_{t,0}(-,0)}= \frac{ \G_{t,s}(-,y) e^{\int_0^y   \tilde{u}(s,z)dz}}{\int_{\bT} \G_{t,s}(-,y')e^{\int_0^{y'}  \tilde{u}(s,z)dz}dy'},
\end{aligned}
\]
with $\tilde{u}(s,y)=\nabla \log \G_{s,0}(y,0)$. Since the goal was to understand the joint distribution of $\{\rho_{\rm m}(t,-|s,y)\}_{y\in\bT}$ and $\{\nabla\psi(s,y)=g(s,y)-1\}_{y\in\bT}$, and  the factor $\G_{t,s}(-,\cdot)$ appearing in the expression of $\rho_{\rm m}$  is independent of $\F_s$, it reduces to studying the joint distribution of $\tilde{u}(s,\cdot)$ and $g(s,\cdot)$. However, $\tilde{u}$ solves the same stochastic Burgers equation as  $u=\nabla\log Z=\nabla h$, with the only difference coming from the initial data. In the periodic setting, similar to the projective process, there is a one-force-one-solution principle for the stochastic Burgers equation \cite{Ros21}, and something similar to \eqref{051705-23} holds, which implies that $\tilde{u}(s,\cdot)\approx u(s,\cdot)$ for $s\gg1$. Therefore, the problem reduces to studying the joint distribution of $g(s,\cdot)$ and $u(s,\cdot)$, the solution to the Fokker-Planck equation and its coefficient.

In general, it is impossible to derive the joint distribution of the solution to a Fokker-Planck equation and its coefficient.  However, for the $1+1$ white noise, there is an amazing time-reversal skew-symmetry for the solution to the stochastic Burgers equation at stationarity: suppose $Z_0(x)=e^{\beta W(x)}$ for some standard Brownian bridge $W$, then $u=\nabla \log Z$ satisfies 
\begin{equation}\label{e.burgersAsym}
\{u(s,\cdot)\}_{s\in[0,t]}\stackrel{\text{law}}{=}\{-u(t-s,\cdot)\}_{s\in[0,t]}.
\end{equation}
The above identity in law  is the only place where we restrict the argument to the white noise -- for the smooth noise, we do not expect it to hold.

With \eqref{e.burgersAsym}, we consider a third diffusion process solving the SDE 
\begin{equation}\label{e.diff3}
d\tilde{\cY}_s=u(t-s,\tilde{\cY}_s)ds+dB_s, \quad\quad \tilde{\cY}_0\sim \mathrm{m}.
\end{equation}
Compare to \eqref{e.diff2}, $\cY$ solves the same SDE with a time-reversed drift of the opposite sign. Let $\tilde{g}$ be the quenched density of $\tilde{\cY}$, then the identity in law \eqref{e.burgersAsym} implies that, for any $t>0$, 
\begin{equation}\label{e.jointgu}
(g(t,\cdot),u(t,\cdot))\stackrel{\text{law}}{=}(\tilde{g}(t,\cdot),-u(0,\cdot)).
\end{equation}
This is the key identity used in \cite{YGTK23}. Now we compare
\eqref{e.diff3} with \eqref{e.diff1}, the two diffusions share the
same drift, with the only difference coming from the initial
condition. Therefore, one can go back to the polymer description and
view $\tilde{\cY}$ as a polymer path on a cylinder with appropriate
boundary conditions, and we actually have
 \begin{align*}
\tilde{g}(t,x)=\int_{\bT}\left( \frac{  \G_{t,0}(y,x) e^{\beta
                 W(x)}}{\int_{\bT} \G_{t,0}(y,x')e^{\beta
                 W(x')}dx'}\right) dy
    =\int_{\bT}\frac{\rhob(t,y;0,x)e^{\beta W(x)}}{\int_{\bT}\rhob(t,y;0,x')e^{\beta W(x')}dx'}dy.
\end{align*}
Applying Theorem~\ref{t.ofos}, we know that for $t\gg1$: $\rhob(t,y;0,\cdot)$ reaches equilibrium, and 
\[
\tilde{g}(t,x)\stackrel{\text{law}}{\approx}\frac{e^{\beta \tilde{W}(x)}e^{\beta W(x)}}{\int_{\bT}e^{\beta \tilde{W}(x')}e^{\beta W(x')}dx'},
\]
where $\tilde{W}$ is a standard Brownian bridge independent from $W$.
Combining with \eqref{e.jointgu} (remember $u(0,x)=\beta W'(x)$), we finally conclude that for $t\gg1$,
\begin{equation}\label{e.jointgu1}
\{(g(t,x),u(t,x))\}_{x\in\bT}\stackrel{\text{law}}{\approx } \left\{\bigg( \frac{e^{\beta \tilde{W}(x)}e^{\beta W(x)}}{\int_{\bT}e^{\beta \tilde{W}(x')}e^{\beta W(x')}dx'}, -\beta W'(x)\bigg) \right\}_{x\in\bT}.
\end{equation}

To make the above heuristics rigorous, there are two technical difficulties to deal with: (i) the time-reversal skew symmetry \eqref{e.burgersAsym} only holds at stationarity, while the Burgers drift in \eqref{e.diff1} is not at equilibrium. As discussed already, the Burgers equation mixes exponentially fast on a torus, so one can replace the $u$ in \eqref{e.diff1} by its stationary version with a controllable error. This is done in \cite[Section 3]{YGTK23}. (ii) In the white noise case, the drift $u$ is distribution-valued, the SDEs \eqref{e.diff1}, \eqref{e.diff2} and \eqref{e.diff3} are all formal, and the backward and forward equations \eqref{e.bapde},  \eqref{e.fopde} contain distribution-valued coefficients. We proceeded with an approximation and borrowed the well-developed results for
singular diffusions with distribution-valued drifts. This was done in \cite[Section 4]{YGTK23}.

By \eqref{e.jointgu1}, we have the following (joint) approximation for $s,t-s\gg1$
\[
\begin{aligned}
\rho_{\rm m}(t,-|s,y) = \frac{ \G_{t,s}(-,y) e^{\int_0^y   \tilde{u}(s,w)dw}}{\int_{\bT} \G_{t,s}(-,y')e^{\int_0^{y'}  \tilde{u}(s,w)dw}dy'}&\approx \frac{\rhob(t,\mathrm{m};s,y) e^{\int_0^y   u(s,w)dw}}{\int_{\bT} \rhob(t,\mathrm{m};s,y')e^{\int_0^{y'}  u(s,w)dw}dy'}\\
&\stackrel{\text{law}}{\approx }\frac{e^{\beta \bar{W}(y)}e^{-\beta W(y)}}{\int_{\bT} e^{\beta \bar{W}(y')}e^{-\beta W(y')}dy'},
\end{aligned}
\]
where we used Theorem~\ref{t.ofos} to approximate the $\rhob$ factor, and 
\[
\begin{aligned}
\psi(s,y')-\psi(s,y)=\int_y^{y'}(g(s,z)-1)dz\stackrel{\text{law}}{\approx }\int_y^{y'}\left(\frac{e^{\beta \tilde{W}(z)}e^{\beta W(z)}}{\int_{\bT}e^{\beta \tilde{W}(x')}e^{\beta W(x')}dx'}-1\right) dz.
\end{aligned}
\]
Here $W,\tilde{W},\bar{W}$ are three independent copies of standard Brownian bridges. Using these approximations in \eqref{e.i1minusi2}, one can derive the variance formula \eqref{e.exvarwinding}. For the details, see \cite[Section 5]{YGTK23}.

To summarize, the key insight of the argument is to utilize the
interplay between the Gibbs measure formulation   and the diffusion in
a random environment. On the one hand,   the Gibbs measure formulation allows us to express the quenched density explicitly in terms of the Green's function of SHE, which, in large time, relates to the underlying invariant measure. On the other hand, viewing the problem through the lens of diffusion in a random environment enables us to use time-reversal skew symmetry, transforming the non-Markovian drift into a Markovian one. The diffusion in \eqref{e.diff2} is the continuous analogue of the second class particle in the simple exclusion process, so our approach may be compared to the study of geodesics and competing interfaces in the context of last passage percolation \cite{Sep20}.

\section{Explicit formula for the corrector}
\label{s.corrector}

Recall that in Section~\ref{s.height}, we discussed the previous
studies on the fluctuations of the height function, and presented two
proofs of the central limit theorem \eqref{e.clt}. The first proof
employs a  homogenization argument to analyze the additive functional
of the projective process \eqref{e.additivefunctional}. The key there
is to construct the corrector by solving a Poisson equation,
which allows us to decompose  the (centered) additive functional   as
the sum of co-boundary terms and a martingale. The second proof
utilizes the Clark-Ocone formula,  which  approximates the centered
height function by an explicit martingale \eqref{e.deadditive1}.  Both
proofs rely on the martingale central limit theorem, but the
martingales differ: one is expressed in terms of the corrector, while
the other involves the Malliavin derivative.  Exploring the connection
between these two approaches and the relation between the martingales
is a natural next step. Notably, the corrector $\chi$ solving the
Poisson equation \eqref{e.poisson} and its Frechet gradient
$\mathcal{D}\chi$ appearing in \eqref{e.deadditive} admit explicit expressions: 
\begin{theorem}\label{t.corrector}
Suppose that  $\chi$ solves the
Poisson equation \eqref{e.poisson} and $\mathcal{D}\chi$ is as in
\eqref{e.deadditive}. Then,
\begin{equation}\label{e.exchi}
\chi(\rho)=\frac{2}{\beta^2}\bigg(\E_{\rho_1}\E_{\rho_2}\log \int_{\bT^d}
\rho_1(y)\rho_2(y)  dy- \E_{\rho_1}\log \int_{\bT^d} \rho(y)
\rho_1(y)  dy\bigg),\qquad  \mbox{$\pi_\infty$ a.s}.
\end{equation}
If, in addition, we assume that $\hat{R}(n)>0$ for all $n\in\Z^d$, then
\begin{equation}\label{e.exDchi}
\mathcal{D}\chi(\rho,y)=\frac{2}{\beta^2}-\frac{2}{\beta^2}\E_{\rho_1}
\frac{\rho_1(y)}{\int_{\bT^d} \rho(y')\rho_1(y') dy'}, \qquad  \mbox{$\pi_\infty$ a.s}.
\end{equation}
Here $\rho_1,\rho_2$ are sampled independently from the
invariant measure $\pi_\infty$ of the projective process, and $\E_{\rho_1}, \E_{\rho_2}$ are
the separate expectations on them. 
\end{theorem}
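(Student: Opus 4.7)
The plan is to verify directly that the functional defined by the right-hand side of \eqref{e.exchi}, which I denote $\chi^{\star}$, satisfies the Poisson equation $-\mathcal{L}\chi^{\star}=\tilde{\mathcal{R}}$ with $\int \chi^{\star}\,d\pi_\infty=0$, since by the integral representation \eqref{e.integralcorrector} and the exponential contraction of Theorem~\ref{t.ofos} the corrector is uniquely determined by these two properties. Crucially, the very same It\^o computation that yields the drift also yields the stochastic differential, from which \eqref{e.exDchi} is read off by matching with \eqref{e.deadditive}.

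The central calculation is It\^o's formula applied to $\Phi(\rho(t),\rho_1):=\log\int \rho(t,y)\rho_1(y)\,dy$ for fixed $\rho_1\in D^\infty(\bT^d)$, using the SPDE \eqref{e.spderho} for $\rho$. Writing $F:=\int \rho\rho_1\,dy$, a direct computation yields
\begin{equation*}
\mathcal{L}_\rho\Phi(\rho,\rho_1)=\tfrac{1}{2F}\int \rho\,\Delta\rho_1\,dy+\tfrac{\beta^2}{2}\mathcal{R}(\rho)-\tfrac{\beta^2}{2F^2}\int\int (\rho\rho_1)(y_1)(\rho\rho_1)(y_2)R(y_1-y_2)\,dy_1\,dy_2
\end{equation*}
together with stochastic part $\beta\int \rho(y)[\rho_1(y)/F-1]\xi(t,y)\,dy\,dt$. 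The decisive observation is that this same expression holds with the roles of $\rho$ and $\rho_1$ exchanged: the quadratic-variation double integral is manifestly symmetric in $\rho\leftrightarrow\rho_1$, and integration by parts gives $\int\rho\,\Delta\rho_1\,dy=\int\rho_1\,\Delta\rho\,dy$, so the only asymmetry is
\begin{equation*}
\mathcal{L}_\rho\Phi(\rho,\rho_1)-\mathcal{L}_{\rho_1}\Phi(\rho,\rho_1)=\tfrac{\beta^2}{2}\bigl[\mathcal{R}(\rho)-\mathcal{R}(\rho_1)\bigr].
\end{equation*}

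Averaging over $\rho_1\sim\pi_\infty$ and invoking invariance (Theorem~\ref{t.ofos}) in the $\rho_1$-variable gives $\E_{\rho_1}\mathcal{L}_{\rho_1}\Phi(\rho,\rho_1)=0$ for each fixed $\rho$, hence $\E_{\rho_1}\mathcal{L}_\rho\Phi(\rho,\rho_1)=\tfrac{\beta^2}{2}\tilde{\mathcal{R}}(\rho)$, and multiplying by $-2/\beta^2$ produces $\mathcal{L}\chi^{\star}(\rho)=-\tilde{\mathcal{R}}(\rho)$. The two summands in \eqref{e.exchi} are arranged so that $\int \chi^{\star}\,d\pi_\infty=0$ is automatic (the two double expectations coincide after relabelling), matching the mean-zero normalization forced on $\chi$ by \eqref{e.integralcorrector}, so $\chi=\chi^{\star}$. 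Averaging the stochastic part of $-\tfrac{2}{\beta^2}d\Phi(\rho(t),\rho_1)$ over $\rho_1\sim\pi_\infty$ and identifying with the integrand in \eqref{e.deadditive} yields $\beta\rho(y)\mathcal{D}\chi(\rho,y)=\tfrac{2}{\beta}\rho(y)\bigl[1-\E_{\rho_1}\rho_1(y)/\int\rho\rho_1\,dy'\bigr]$, which is \eqref{e.exDchi}; the assumption $\hat R(n)>0$ enters by guaranteeing that the Fr\'echet-derivative representative extracted from this stochastic integral is the canonical one pinned down by the normalization $\int \rho\,\mathcal{D}\chi\,dy=0$ of \eqref{010908-24}.

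The principal technical obstacle is justifying the It\^o calculation in the space-time white-noise case, where \eqref{e.spderho} is only formal, $\rho=Z/\bar Z$ is not a strong solution, and samples from $\pi_\infty$ are only H\"older continuous, so $\Delta\rho_1$ is distribution-valued. The standard remedy, following \cite[Sections~3 and 5]{GK21}, is to mollify the noise so that $\rho^\varepsilon$ solves a regularized SPDE classically, carry out the exact identity at that level (where all integrations by parts and $\pi_\infty^{\varepsilon}$-invariance are classical), and pass to the limit $\varepsilon\downarrow 0$ using Theorem~\ref{t.ofos} and the uniform moment bounds \eqref{e.mmbdrho}. The algebraic heart of the argument is however quite robust: the quadratic-variation piece in $\mathcal{L}_\rho\Phi$ is symmetric in the two slots, the Poisson source is produced by a single asymmetric $\mathcal{R}$-term, and invariance of $\pi_\infty$ disposes of everything else.
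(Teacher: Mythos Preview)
Your argument is correct and takes a genuinely different route from the paper's proof. The paper works in the stationary setting and \emph{derives} the formulas by subtracting two already-established martingale decompositions of $h(t,0)-\EE h(t,0)$: the homogenization one \eqref{e.deadditive3}, built from the abstract corrector, and the Clark--Ocone one \eqref{e.deadditive4}, whose coefficients $\mathcal A,\mathcal Y$ are explicit. The difference equates a co-boundary, which is $O(1)$ uniformly in $t$ by \eqref{e.mmbdrho}, to a martingale whose bracket grows linearly in $t$; this forces the $R$-weighted norm of the integrand to vanish, and under $\hat R(n)>0$ identifies $\mathcal D\chi$ pointwise. The formula for $\chi$ then follows from the resulting co-boundary identity by sending $t\to\infty$ via $\mathcal P_t\to\pi_\infty$. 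Your approach is instead a direct verification of the Poisson equation, resting on the clean algebraic fact that the drift $\mathcal L_\rho\Phi(\rho,\rho_1)$ of $\Phi=\log\int\rho\rho_1$ is symmetric in $\rho\leftrightarrow\rho_1$ except for the single term $\tfrac{\beta^2}{2}\mathcal R(\rho)$, so that averaging over $\rho_1\sim\pi_\infty$ and invoking invariance kills everything but $\tfrac{\beta^2}{2}\tilde{\mathcal R}(\rho)$. This is close in spirit to the paper's post-hoc remark checking \eqref{011608-24} directly, though even that remark uses the exchangeability-in-law $\log\int Z(t,\cdot;\rho)\rho_1\stackrel{\rm law}{=}\log\int Z(t,\cdot;\rho_1)\rho$ and stationarity of $\rho(\cdot;\rho_1)$, rather than your infinitesimal generator symmetry. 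Your route is more self-contained (no Clark--Ocone input) and explains structurally why the formula works; the paper's route shows how the formula was \emph{discovered}, and sidesteps the domain question implicit in your invariance step $\E_{\rho_1}\mathcal L_{\rho_1}\Phi(\rho,\rho_1)=0$, which in the white-noise case needs the mollification you mention.
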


\proof
The fact that both $\chi,\mathcal{D}\chi$ are well-defined follows from the moment bounds provided in \eqref{e.mmbdrho}. The proof relies on a subtle comparison of the two martingale decompositions. One might expect that it is easier to deal with $\mathcal{D}\chi$ first since it appears in the martingale term which constitutes the main contribution to the height fluctuations. In contrast, the corrector $\chi$ itself only appears in co-boundary terms and is much ``smaller'', vanishing in the central limit theorem. Indeed, we will first identify the expression of $\mathcal{D}\chi$.

 From now on, we assume the KPZ equation starts from stationarity, namely, 
\[
Z(0,x)=e^{H(x)},
\]
with $H$ sampled from the invariant measure for the KPZ equation with $H(0)=0$. Since we have different sources of randomnesses involved in the discussion, including the noise $\xi$, the initial data $H$, and the auxiliary $\rho_1,\rho_2$ (in the statement of Theorem~\ref{t.corrector}), throughout  the rest of the section, we use $\EE$ as the expectation on all randomnesses, and $\E$ as the expectation on $\rho(0)=\frac{e^{H}}{\int e^{H}}=:\rho$.

In \eqref{e.deadditive2}, we pick $x=0$ so it becomes \begin{equation}
\begin{aligned}
h(t,0)-\gamma_L(\beta)t=&\log \bar{Z}_0+\log \rho(t,0)-\tfrac12\beta^2\chi(\rho(0))+\tfrac12\beta^2\chi(\rho(t))\\
&+\beta\int_0^t\int_{\bT^d}\rho(s,y)[1-\tfrac12\beta^2\D\chi(\rho(s),y)]\xi(s,y)dyds,
\end{aligned}
\end{equation}
where $\{\rho(t)\}_{t\geq0}$ is the projective process starting at stationarity.

In the stationary setting, we claim that 
\begin{equation}\label{e.meanh}
\EE h(t,0)=\gamma_L(\beta)t.
\end{equation} To see it holds,  note 
\[
\EE h(t,0)=\EE \log Z(t,0)=\EE \log \rho(t,0)+\EE \log \bar{Z}_t.
\]
Since $\{\rho(t,\cdot)\}_{t\geq0}$ starts from stationarity, we have 
\[
\EE \log \rho(t,0)=\EE \log \rho(0,0)=-\EE \log \bar{Z}_0.
\] In addition, by \eqref{e.semilogZ}, we have 
\[
\EE \log \bar{Z}_t-\EE \log \bar{Z}_0=\gamma_L(\beta )t,
\]
which proves the claim.

Define $\mathcal{B}:D^\infty(\bT^d)\to\R$ as
\begin{equation}
\mathcal{B}(f)=\log f(0),
\end{equation}
so 
\[
\log \bar{Z}_0=\log \int_{\bT^d} e^{H(x)}dx=-\mathcal{B}(\rho(0)),
\]
where we used the assumption of $H(0)=0$ so that
\[
\rho(0,0)=\frac{e^{H(0)}}{\int_{\bT^d}e^{H(x)}dx}=\frac{1}{\int_{\bT^d}e^{H(x)}dx}.
\]
Using the operator $\mathcal{B}$ and \eqref{e.meanh}, we can rewrite the decomposition as 
\begin{equation}\label{e.deadditive3}
\begin{aligned}
h(t,0)-\EE h(t,0)=&\mathcal{B}(\rho(t))+\tfrac12\beta^2\chi(\rho(t))-\mathcal{B}(\rho(0))-\tfrac12\beta^2\chi(\rho(0))\\
&+\beta\int_0^t\int_{\bT^d}\rho(s,y)[1-\tfrac12\beta^2\D\chi(\rho(s),y)]\xi(s,y)dyds.
\end{aligned}
\end{equation}

The decomposition in \eqref{e.deadditive3} is what we obtained from the homogenization argument, in the stationary setting: the centered height function is written as the sum of the co-boundary terms (on the first line of the r.h.s.) plus a martingale (the second line).

On the other hand, by the Clark-Ocone formula, the following decomposition holds in the stationary setting  \cite[Proposition 3.1]{ADYGTK22}:\footnote{The result is for the $1+1$ spacetime white noise, but the proof applies to the case of a smooth noise in arbitrary dimensions.}
\begin{equation}\label{e.deadditive4}
\begin{aligned}
h(t,0)-\EE h(t,0)=\mathcal{Y}(\rho(0))-\mathcal{Y}(\rho(t))+\beta\int_0^t\int_{\bT^d}\mathcal{A}(\rho(s),y)\rho(s,y)\xi(s,y)dyds,
\end{aligned}
\end{equation}
with $\mathcal{A}:D^\infty(\bT^d)\times \bT^d\to\R$ and $\mathcal{Y}:D^\infty(\bT^d)\to\R$  defined as 
\begin{equation}\label{e.defAY}
\begin{aligned}
\mathcal{A}(f,y)&=\E_{\rho_1}\frac{\rho_1(y)}{\int_{\bT^d} \rho_1(y')f(y')dy'},\\
\mathcal{Y}(f)&=\E_{\rho_1} \log \int_{\bT^d} \frac{f(y)}{f(0)} \rho_1(y)dy,
\end{aligned}
\end{equation}
where $\rho_1$ is sampled from the invariant measure of the projective process, independent of the noise $\xi$ and the initial data $\rho(0)$, and the expectation $\E_{\rho_1}$ is on $\rho_1$.

Combining \eqref{e.deadditive3} and \eqref{e.deadditive4}, we obtain the identity 
\begin{equation}\label{e.combine}
\begin{aligned}
&\mathcal{B}(\rho(t))+\tfrac12\beta^2\chi(\rho(t))+\mathcal{Y}(\rho(t))-\mathcal{B}(\rho(0))-\tfrac12\beta^2\chi(\rho(0))-\mathcal{Y}(\rho(0))\\
&=\beta\int_0^t\int_{\bT^d}\bigg(\mathcal{A}(\rho(s),y)-[1-\tfrac12\beta^2\D\chi(\rho(s),y)]\bigg)\rho(s,y)\xi(s,y)dyds.
\end{aligned}
\end{equation}
The above identity relates the co-boundary terms to a martingale term, and, by the exponential mixing of the projective process, one might expect that the l.h.s. is of order $O(1)$ while the martingale term  grows as $t\to\infty$ and is   of order $O(\sqrt{t})$. But this is a contradiction, so the martingale term must vanish for all $t>0$, which leads to the explicit expression of $\mathcal{D}\chi$.

More precisely, since $\{\rho(t)\}_{t\geq0}$ is at stationarity, by \eqref{e.mmbdrho} and the bound on the corrector which is a consequence of \eqref{e.Ptsmall}, we derive that, for any $p\geq1$, 
\begin{equation}\label{e.bdlhs}
\EE |\text{ l.h.s. of \eqref{e.combine}}|^p \leq C,
\end{equation}
uniformly in $t\geq0$. On the other side, by It\^o isometry and the fact that $\rho(t)$ is at stationarity,  we obtain the second moment of the r.h.s. of \eqref{e.combine} is given by 
\[
\beta^2 t\,\E \int_{\bT^{2d}}F(\rho,y_1)F(\rho,y_2)R(y_1-y_2)dy_1dy_2,
\]
with 
\[
F(\rho,y):=\bigg(\mathcal{A}(\rho,y)-[1-\tfrac12\beta^2\D\chi(\rho,y)]\bigg)\rho(y).
\]
Therefore, we  must have 
\[
\E \int_{\bT^{2d}}F(\rho,y_1)F(\rho,y_2)R(y_1-y_2)dy_1dy_2=0.
\]
Otherwise, it contradicts with \eqref{e.bdlhs}. By the assumption of $\hat{R}(n)>0$ for all $n\in\Z^d$, we derive that, almost surely, $F(\rho,y)=0$ almost everywhere on $\bT^d$. As a consequence, \[
\D\chi(\rho,y)=\frac{2}{\beta^2}(1-\mathcal{A}(\rho,y))=\frac{2}{\beta^2}-\frac{2}{\beta^2}\E_{\rho_1}\frac{\rho_1(y)}{\int_{\bT^d}\rho_1(y')\rho(y')dy'},
\]
where $\E_{\rho_1}$ is the expectation on $\rho_1$. So \eqref{e.exDchi} is proved.

To show \eqref{e.exchi}, we first observe that \eqref{e.combine} becomes 
\[
\mathcal{B}(\rho(t))+\tfrac12\beta^2\chi(\rho(t))+\mathcal{Y}(\rho(t))-\mathcal{B}(\rho(0))-\tfrac12\beta^2\chi(\rho(0))-\mathcal{Y}(\rho(0))=0.
\]
Recall that $\cP_t$ is the semigroup associated with the projective process, taking expectation with respect to $\xi$ on both side, we obtain
\[
\cP_t\mathcal{B}(\rho)+\tfrac12\beta^2\cP_t \chi(\rho)+\cP_t\mathcal{Y}(\rho)=\mathcal{B}(\rho)+\tfrac12\beta^2\chi(\rho)+\mathcal{Y}(\rho),
\]
which holds for all $t>0$.
Since the process $\rho(t)$ mixes exponentially fast, one expect that as $t\to\infty$, the l.h.s. of the above identity converges to the mean. As a matter of fact, by the same proof for \eqref{e.Ptsmall}, we derive that as $t\to\infty$,
\[
\cP_t\mathcal{B}(\rho)+\tfrac12\beta^2\cP_t \chi(\rho)+\cP_t\mathcal{Y}(\rho)\to \E [\mathcal{B}(\rho)+\mathcal{Y}(\rho)].
\]
(Note that $\E \chi(\rho)=0$ by definition) Thus, we conclude 
\[
\chi(\rho)=\frac{2}{\beta^2}\big(\E [\mathcal{B}(\rho)+\mathcal{Y}(\rho)]-\mathcal{B}(\rho)-\mathcal{Y}(\rho)\big).
\]
One can write $\mathcal{B}+\mathcal{Y}$ as 
\[
\mathcal{B}(\rho)+\mathcal{Y}(\rho)=\E_{\rho_1}\log \int_{\bT^d} \rho(y) \rho_1(y) dy,
\]
which completes the proof of \eqref{e.exchi}.
\qed

\begin{remark}
It is worth noting that the Poisson equation
\begin{equation}
  \label{011608-24}
  \mathcal{L}\chi=-\tilde{\cR}
\end{equation}
is infinite dimensional, hence the
existence of an explicit solution is quite surprising. The explicit expressions of $\chi,\mathcal{D}\chi$ may find applications in other contexts, see e.g. the recent work \cite{BT24}.
\end{remark}

\begin{remark}
Since, we have
already established  in  \eqref{e.exchi}  the formula for the corrector, it is quite
interesting to  verify directly that the Poisson equation indeed holds, using the
expression for $\chi$. In fact the definition of the
functional makes sense for all  $\rho\in D^\infty(\bT^d)$.
Fix any $\rho$, let
$
\rho(t,x;\rho):=Z(t,x;\rho)/\bar Z(t;\rho),
$
where $Z(t,x;\rho)$ solves  \eqref{e.she} with the initial data $Z(0,x;\rho)=\rho(x)$ and $\bar{Z}$ is the spatial integral of $Z$.
Then, by the definition of $\chi$ we have
\begin{equation}\label{e.8231}
\chi\big(\rho(t;\rho)\big)=\frac{2}{\beta^2}  \Big( \log
  \int_{\bT^d}Z(t,x;\rho)   dx\Big)-\frac{2}{\beta^2} \Big(\E_{\rho_1} \log
                 \int_{\bT^d}Z(t,x;\rho) \rho_1(x)  dx\Big)+const.
\end{equation}
Using the It\^o formula, as in \eqref{010608-24}, it
can be concluded that
\begin{align}
  \label{031608-24}
\log
  \int_{\bT^d}Z(t,x;\rho)   dx= \beta
                      \int_0^t\int_{\bT^d}\rho(s,y;\rho)\xi(s,y)dyds -\tfrac{ \beta^2}{2}\int_0^t{\cal R}\big(\rho(s;\rho)\big) ds.
\end{align}
To deal with the second term on the r.h.s. of \eqref{e.8231}, 
 by the properties of the noise itself, for each $\rho$
fixed, we have
$$
\log
                 \int_{\bT^d}Z(t,x;\rho) \rho_1(x)
                 dx\stackrel{{\rm law}}{=}\log
                 \int_{\bT^d}Z(t,x;\rho_1) \rho(x)
                 dx,
$$
with
\begin{align*}
&\log
  \int_{\bT^d}Z(t,x;\rho_1) \rho(x)  dx=\log
  \int_{\bT^d}\rho(t,x;\rho_1) \rho(x)  dx+ \log
  \int_{\bT^d}Z(t,x;\rho_1)  dx.
\end{align*}
Again, by the It\^o formula,
\begin{align*}
\log
  \int_{\bT^d}Z(t,x;\rho_1)   dx= \beta
                      \int_0^t\int_{\bT^d}\rho(s,y;
                 \rho_1)\xi(s,y)dyds -\tfrac{ \beta^2}{2}\int_0^t{\cal R}\big(\rho(s;\rho_1)\big)
    ds,
\end{align*}
and, as a result (due to stationarity of $\big(\rho(t;\rho_1)\big)$),
\begin{align}
  \label{041608-24}
&\E_{\rho_1}\EE \log
                 \int_{\bT^d}Z(t,x;\rho) \rho_1(x)
                 dx=\E_{\rho_1}\EE \log
  \int_{\bT^d}Z(t,x;\rho_1) \rho(x)  dx\notag\\
  &
    =\E_{\rho_1}\log
                 \int_{\bT^d}\rho_1(x) \rho(x)  dx   
  -\tfrac{ \beta^2t}{2}\int_{D^\infty(\bT^d)}{\cal R}(\rho_1)\pi_\infty(d\rho_1).
\end{align}
Therefore, combining \eqref{031608-24} and \eqref{041608-24}, we get 
\begin{align*}
\frac{d}{dt} 
                 \EE\chi\big(\rho(t;\rho)\big) =
  \int_{D^\infty(\bT^d)}{\cal
                 R}(\rho_1)\pi_\infty(d\rho_1) 
    - {\cal P}_t{\cal R}(\rho).
\end{align*}
Substituting $t=0$, \eqref{011608-24} follows.

\end{remark}

\begin{remark}
  \label{rmk5.4}
Finding the Fr\'echet gradient formally from \eqref{e.exchi} leads
to a formula containing only the second term on the right hand side of
\eqref{e.exDchi}. 
A reader might find this fact somewhat puzzling. The explanation is
that the formula for the corrector is given on a ``thin'' subset
(i.e.    closed  with an empty interior) of
   $C(\bT^d)$, that is contained in  $D^\infty(\bT^d)$ - the set of all continuous densities on the
torus $\bT^d$.  Therefore the formula by itself does not allow to
calculate the   Fr\'echet gradient.  To obtain  \eqref{e.exDchi} we need a suitable
extension of the  corrector to an open subset containing 
$D^\infty(\bT^d)$, and one possible choice is to add the term $\frac{2}{\beta^2}\int_{\bT^d}(\rho(y)-1)dy$. 
\end{remark}

\begin{remark}\label{r.8231}
 Using It\^o formula combined with the Clark-Ocone formula, one can verify
that $\chi \big(\rho(t;\rho)\big)$ given by \eqref{e.exchi} satisfies  \eqref{e.deadditive}  
 for any starting point  $\rho\in D^\infty(\bT^d)$ (not necessarily
 starting at stationarity), with $\mathcal{D}\chi$ given by \eqref{e.exDchi}. This fact holds even without the
 assumption on the positivity of the Fourier coefficients of  $R(\cdot)$. We chose to present the proof of Theorem~\ref{t.corrector} under the more restrictive assumption in order to illustrate how the expressions of the corrector and the gradient of corrector were obtained.

 For example we
might proceed as follows.
We define an extension of the corrector \eqref{e.exchi}
on the open set ${\cal C}\subset C(\bT^d)$, consisting of all strictly
positive continuous 
functions $\rho$ on the torus, by letting
\begin{equation}\label{e.exchi1}
  \begin{split}
    &
    \chi(\rho)=\frac{2}{\beta^2}\bigg\{\int_{\bT^d}
\big(\rho(y)-1\big) dy+\hat\E\tilde{\E} \log \int_{\bT^d}
\hat\rho(y)\tilde{\rho}(y)  dy \\
&
-\tilde{\E} \log  \int_{\bT^d} \rho(y) \tilde{\rho}(y)
dy \bigg\} ,\quad \rho\in {\cal C}.
\end{split}
\end{equation}
Recall that the support of $\pi_\infty$ - the invariant measure for
the projective process - is contained in $D_c(\bT^d)$ - the set of all continuous densities on the
torus $\bT^d$.  
A simple estimate shows that
\begin{equation}
 -\infty<\log\Big(\inf_{x\in\bT^d}\rho\Big) \le \int_{D_c(\bT^d) }\log \int_{\bT^d} \rho(y)
\tilde{\varrho}(y)  dy \pi_\infty(d\tilde\rho)\le  \log\|\rho\|_{\infty}<+\infty,
\end{equation}
so $\chi$ is well defined and finite on ${\cal C}$.
It obviously agrees with \eqref{e.exchi} on $D_c(\bT^d)$ and 
its Fr\'echet gradient on the entire set ${\cal C}$  is  given by
the expression \eqref{e.exDchi}. Using It\^o formula and an approximation argument one can verify
that $\chi \big(\rho(t;\rho)\big)$ satisfies  \eqref{e.deadditive}  
 for any starting point  $\rho\in D_c(\bT^d)$ (not necessarily
 starting at stationarity). This fact holds even without the
 assumption on positivity of the Fourier coefficients of the
 covariance function $R(\cdot)$.

  \end{remark}

\section{Law of iterated logarithm}
\label{s.lil}

In this section, we prove the law of iterated logarithm for the height function $h(t,x)$, when it starts from equilibrium, namely $h(0,x)=H(x)$ with $H$ sampled from the invariant measure of the KPZ equation with $H(0)=0$. The problem on the whole line was studied in \cite{DP23}. Here is the main result:
\begin{theorem}\label{t.lil}
With $\rho(t,\cdot)$ started from stationariy, and $\gamma_L(\beta),\sigma_L(\beta)$ given in \eqref{e.congamma} and \eqref{e.exvar}, we have
\begin{equation}\label{e.lil}
\begin{aligned}
&\limsup_{t\to\infty} \frac{h(t,x)-\gamma_L(\beta)t}{\sqrt{2t\log \log t}}=\sigma_L(\beta),\\
&\liminf_{t\to\infty} \frac{h(t,x)-\gamma_L(\beta)t}{\sqrt{2t\log \log t}}=-\sigma_L(\beta),
\end{aligned}
\end{equation}
almost surely.
\end{theorem}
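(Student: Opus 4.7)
The approach is to reduce the LIL for $h(t,x)-\gamma_L(\beta)t$ to the classical LIL for a continuous martingale via the semimartingale decomposition from Section~\ref{s.homogenization}, Birkhoff's ergodic theorem for the projective process, and a Dambis--Dubins--Schwarz time change. Starting at stationarity so that $\EE h(t,0)=\gamma_L(\beta)t$, identity \eqref{e.deadditive3} reads
\begin{equation}
h(t,0)-\gamma_L(\beta)t = F(\rho(t))-F(\rho(0)) + \mathscr{M}_t,\notag
\end{equation}
where $F(\rho):=\mathcal{B}(\rho)+\tfrac12\beta^2\chi(\rho)$ and $\mathscr{M}_t$ is the martingale on the second line of \eqref{e.deadditive3}. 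For general $x\in\bT^d$ the extra summand $\log\rho(t,x)-\log\rho(t,0)$ is of order $O(1)$ in every $L^p$ by \eqref{e.mmbdrho}, so it may be absorbed into the coboundary piece below.

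First, I would argue that the coboundary part is asymptotically negligible, i.e.
\begin{equation}
\frac{F(\rho(t))-F(\rho(0))}{\sqrt{2t\log\log t}} \longrightarrow 0\qquad\text{a.s.}\notag
\end{equation}
Since $\{\rho(t)\}_{t\ge0}$ is stationary under $\pi_\infty$, the bound \eqref{e.mmbdrho} gives $\EE|\mathcal{B}(\rho(t))|^p<\infty$ for every $p<\infty$, and the representation \eqref{e.integralcorrector} together with the exponential bound \eqref{e.Ptsmall} yields $\EE|\chi(\rho(t))|^p<\infty$. After upgrading these to a moment estimate on $\sup_{t\in[n,n+1]}|F(\rho(t))|$ using continuity of $t\mapsto\rho(t)$ in the appropriate topology, a standard Borel--Cantelli argument with any $p>2$ completes the step.

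The main step is the LIL for the continuous martingale $\mathscr{M}_t$. Its bracket equals $\la\mathscr{M}\ra_t=\beta^2\int_0^t\Phi(\rho(s))\,ds$ with
\begin{equation}
\Phi(v):=\int_{\bT^{2d}} v(y_1)v(y_2)\prod_{j=1,2}\bigl[1-\tfrac12\beta^2\D\chi(v,y_j)\bigr]R(y_1-y_2)\,dy_1dy_2.\notag
\end{equation}
The synchronization estimate \eqref{051705-23} of Theorem~\ref{t.ofos} implies that the stationary projective process is a measurable factor of the white noise, so ergodicity of the time shift acting on $\xi$ descends to $\{\rho(t)\}$. Birkhoff's ergodic theorem then gives $t^{-1}\la\mathscr{M}\ra_t\to \beta^2\int\Phi\,d\pi_\infty=\sigma_L^2(\beta)$ a.s., the identification with \eqref{e.sigmaex1} being immediate from the construction of the corrector. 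Since $\sigma_L^2(\beta)>0$ by \eqref{e.varlowbd}, one has $\la\mathscr{M}\ra_\infty=\infty$ a.s. A Dambis--Dubins--Schwarz time change produces a standard Brownian motion $B$ such that $\mathscr{M}_t=B_{\la\mathscr{M}\ra_t}$, and combining Khinchin's LIL for $B$ with $\la\mathscr{M}\ra_t\sim\sigma_L^2(\beta)t$ yields
\begin{equation}
\limsup_{t\to\infty}\frac{\mathscr{M}_t}{\sqrt{2t\log\log t}}=\sigma_L(\beta)\qquad\text{a.s.,}\notag
\end{equation}
and symmetrically for the liminf; together with the vanishing coboundary this gives \eqref{e.lil}.

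The chief obstacle is to justify the \emph{almost sure} convergence in the ergodic theorem for $\la\mathscr{M}\ra_t/t$: Theorem~\ref{t.ofos} only asserts uniqueness of $\pi_\infty$, and one needs to upgrade this to time-shift ergodicity. This is done through the quantitative synchronization estimate \eqref{051705-23}, which supplies a measurable pull-back of $\rho(t,\cdot)$ from the noise path, automatically transferring the ergodicity of $\xi$ under time shifts. The remaining technical point—interpolation from dyadic to continuous times in the Borel--Cantelli argument for the coboundary—is routine given the continuity of the projective process.
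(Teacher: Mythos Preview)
Your proposal is correct and follows essentially the same route as the paper: the semimartingale decomposition \eqref{e.deadditive2}/\eqref{e.deadditive3}, a Borel--Cantelli argument for the coboundary, Birkhoff's ergodic theorem for $t^{-1}\la\mathscr{M}\ra_t\to\sigma_L^2(\beta)$, and then Dambis--Dubins--Schwarz plus Khinchin's LIL. The paper fills in the two technical points you flag: the upgrade to $\sup_{s\in[n,n+1]}$ moment bounds is done via a chaining argument on the Green's function (their Lemma~6.3), and the integrability of the bracket integrand $\Phi$ is verified using the explicit formula \eqref{e.exDchi} for $\mathcal{D}\chi$, which shows it is almost surely bounded.
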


The proof relies on the martingale representation \eqref{e.deadditive2}:
\[
h(t,x)-\gamma_L(\beta)t=\mathscr{C}_t+ \mathscr{M}_t,
\]
with 
\begin{equation}\label{e.defCM}
\begin{aligned}
&\mathscr{C}_t(x)=\log \bar{Z}_0+\log \rho(t,x)-\tfrac12\beta^2\chi(\rho(0))+\tfrac12\beta^2\chi(\rho(t)),\\
&\mathscr{M}_t=\beta\int_0^t\int_{\bT^d}\rho(s,y)[1-\tfrac12\beta^2\D\chi(\rho(s),y)]\xi(s,y)dyds.
\end{aligned}
\end{equation}
To show Theorem~\ref{t.lil}, it is enough to prove that \eqref{e.lil} holds with $h(t,x)-\gamma_L(\beta)t$ replaced by $\mathscr{M}_t$, and  the following lemma holds:
\begin{lemma}\label{l.Cgozero}
\begin{equation}\label{e.Cgozero}
\lim_{t\to+\infty}\frac{\mathscr{C}_t(x)}{\sqrt{2t\log \log t}}=0
\end{equation}
almost surely.
 \end{lemma}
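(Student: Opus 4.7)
The plan is to bound each of the four terms in the definition \eqref{e.defCM} of $\mathscr{C}_t(x)$ individually, and show that each is $o(\sqrt{2t\log\log t})$ almost surely. The two $t$-independent contributions $\log\bar{Z}_0$ and $-\tfrac{1}{2}\beta^2\chi(\rho(0))$ are almost surely finite: the first by \eqref{e.mmbdrho} at $t=0$, and the second because the explicit formula \eqref{e.exchi} combined with \eqref{e.mmbdrho} yields $\EE|\chi(\varrho)|^p<\infty$ for every $p\ge 1$ whenever $\varrho\sim\pi_\infty$. These will therefore contribute trivially to the limit.

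For the two $t$-dependent terms $\log\rho(t,x)$ and $\chi(\rho(t))$, the key point I would exploit is that, since the KPZ equation is started at equilibrium, the projective process $\{\rho(t,\cdot)\}_{t\ge 0}$ is stationary with each marginal distributed as $\pi_\infty$. Combined with the estimates above, this gives the uniform moment bound
$$
\EE|\log\rho(t,x)|^p+\EE|\chi(\rho(t))|^p\le C_p,\qquad t\ge 0,\ p\ge 1.
$$
Applying Chebyshev's inequality at the integer times $t=n$ with $p$ fixed and sufficiently large, the resulting tail bounds are summable in $n$, and the Borel--Cantelli lemma yields
$$
\lim_{n\to\infty}\frac{|\log\rho(n,x)|+|\chi(\rho(n))|}{\sqrt{2n\log\log n}}=0\quad\text{a.s.}
$$

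To upgrade this discrete statement to the full continuous limit, the plan is to control the oscillation of the two time-dependent terms within each unit interval $[n,n+1]$. By stationarity of $\{\rho(t,\cdot)\}_{t\ge 0}$, the random variable $\sup_{t\in[n,n+1]}\big(|\log\rho(t,x)|+|\chi(\rho(t))|\big)$ has the same law as the corresponding supremum over $[0,1]$, so it suffices to verify that the latter has finite moments of all orders; a second Borel--Cantelli argument then delivers the conclusion. The main obstacle, and the only step requiring any real care, is precisely this sup-in-time moment bound on the unit interval: for $\log\rho(t,x)$ it amounts to upgrading \eqref{e.mmbdrho} via parabolic regularity of $Z$ on $[0,1]\times\bT^d$ (and, in the singular case, Mueller's positivity for the SHE), while the corresponding bound for $\chi(\rho(t))$ follows from the explicit formula \eqref{e.exchi} together with the time-continuity of $\rho(t,\cdot)$ in $C(\bT^d)$.
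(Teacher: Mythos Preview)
Your proposal is correct and follows essentially the same route as the paper: uniform $L^p$ moment bounds on the four terms of $\mathscr{C}_t(x)$, Chebyshev at integer times, Borel--Cantelli, and a sup-in-time upgrade over unit intervals. The paper in fact proves the slightly stronger statement $\mathscr{C}_t(x)/\sqrt{t}\to0$ a.s.

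Two minor tactical differences are worth noting. First, for the sup-in-time bound the paper proves directly that $\sup_{t\ge1}\EE\big[\sup_{s\in[t,t+1]}\sup_x\{\rho(s,x)^p+\rho(s,x)^{-p}\}\big]<\infty$ by writing $\rho(s,x)$ as a ratio involving the propagator $\G_{s,s-1}$ and bounding $\sup/\inf$ of $\G_{s,s-1}(x,y)$ over $(s,x,y)\in[1,2]\times\bT^{2d}$ via chaining; your stationarity trick, reducing everything to the single interval $[0,1]$ (equivalently $[1,2]$), is a clean shortcut to the same bound. Second, for the corrector the paper does not appeal to the explicit formula \eqref{e.exchi} but uses the deterministic estimate $|\chi(\rho)|\le C(1+\sup_x\rho(x))$ established for the abstract corrector in \cite[Proposition~5.5]{GK21}; this sidesteps the issue that \eqref{e.exchi} is only asserted $\pi_\infty$-a.s.\ and one must argue (via continuity in $\rho$, or via Remark~\ref{r.8231}) that it holds simultaneously along the whole trajectory before taking a supremum in $t$.
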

 
 To prove the above lemma, we need an improvement of \eqref{e.mmbdrho}:
 \begin{lemma}
 For any $p\geq1$, we have  
 \begin{equation}\label{e.mmbdnew}
   C_p:=\sup_{t\geq 1}\EE\Big[ \sup_{s\in[t,t+1]} \sup_{x\in\bT^d} \{\rho(s,x)^p+\rho(s,x)^{-p}\}\Big]<+\infty.
 \end{equation}
 \end{lemma}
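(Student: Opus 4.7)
\textbf{Proof proposal for \eqref{e.mmbdnew}.}

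The plan is to exploit the Markov property of the projective process at the intermediate time $t-1/2$ in order to reduce the claim to bounds on the Green's function $\G_{s,t-1/2}(x,y)$ over the compact spacetime window with time separation in $[1/2,3/2]$. The crucial point is that the random initial density $\rho(t-1/2,\cdot)$ will drop out of the resulting estimate because it has total mass $1$.

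For $s\in[t,t+1]$, one first writes
\[
\rho(s,x)=\frac{\int_{\bT^d}\G_{s,t-1/2}(x,y)\rho(t-1/2,y)\,dy}{\int_{\bT^{2d}}\G_{s,t-1/2}(x',y)\rho(t-1/2,y)\,dy\,dx'}.
\]
Bounding the numerator from above by $\sup_{y}\G_{s,t-1/2}(x,y)$ and the denominator from below by $L^d\inf_{x,y}\G_{s,t-1/2}(x,y)$, and applying the symmetric bounds to control $\rho(s,x)^{-1}$, one obtains
\[
\sup_{s\in[t,t+1]}\sup_{x\in\bT^d}\bigl\{\rho(s,x)^p+\rho(s,x)^{-p}\bigr\}\leq C_{L,d,p}\left(\frac{\overline{\G}_t}{\underline{\G}_t}\right)^{p},
\]
where $\overline{\G}_t:=\sup_{s\in[t,t+1]}\sup_{x,y\in\bT^d}\G_{s,t-1/2}(x,y)$ and $\underline{\G}_t:=\inf_{s\in[t,t+1]}\inf_{x,y\in\bT^d}\G_{s,t-1/2}(x,y)$. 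The right hand side depends only on the noise restricted to $[t-1/2,t+1]$, and by the time-stationarity of $\xi$ the law of $\overline{\G}_t/\underline{\G}_t$ does not depend on $t$, so it suffices to show that its $p$-th moment is finite.

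By Cauchy-Schwarz, the task reduces to proving $\EE[\overline{\G}_t^{2p}]<\infty$ and $\EE[\underline{\G}_t^{-2p}]<\infty$. The positive moment bound on $\overline{\G}_t$ is routine: the Green's function has pointwise moments of all orders at any positive time separation on the torus, and Kolmogorov's continuity criterion, applied to $\G_{s,t-1/2}(x,y)$ as a H\"older continuous random field in $(s,x,y)$ on the fixed compact parameter set $[t,t+1]\times\bT^{2d}$, lifts the pointwise control to the supremum bound.

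The main obstacle is the negative moment bound $\EE[\underline{\G}_t^{-2p}]<\infty$, which requires a quantitative strict positivity of $\G$. The plan is to factor $\G_{s,t-1/2}(x,y)=q_{s-t+1/2}(x-y)\,Y_{s,t-1/2}(x,y)$, where the heat kernel satisfies $q_{s-t+1/2}(x-y)\geq c_L>0$ uniformly for $s\in[t,t+1]$ and $x,y\in\bT^d$, and then to control the negative moments of $\inf_{s,x,y} Y_{s,t-1/2}(x,y)$ via pointwise $L^p$-bounds on $\log Y_{s,t-1/2}$ (arising, in the smooth noise case, from the Feynman-Kac representation and Gaussian moment bounds on the associated exponential martingale, and in the $1+1$ spacetime white noise case from strict positivity of the SHE solution together with known integrability estimates for $\log \G$), combined with Kolmogorov continuity applied to $\log Y_{s,t-1/2}$ on the compact parameter set. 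Combining these ingredients yields the desired $t$-uniform bound.
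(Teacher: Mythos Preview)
Your proposal is correct and follows essentially the same approach as the paper: both express $\rho(s,x)$ via the Green's function of SHE restarted at an earlier time, observe that the initial density cancels (it has total mass one), and reduce the claim to positive and negative moment bounds on $\sup/\inf$ of $\G$ over a compact parameter set, deferring the latter to a chaining/Kolmogorov argument. The only cosmetic difference is that the paper restarts at the moving time $s-1$ (so the Green's function always has time separation exactly $1$), whereas you restart at the fixed time $t-1/2$ (so the separation ranges over $[1/2,3/2]$); both choices work equally well, and your fixed reference time is arguably cleaner for the Kolmogorov step since $\G_{s,t-1/2}(x,y)$ is then a random field in $(s,x,y)$ with a single backward time.
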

 
 \begin{proof}
 Recall that $\rho(s,x)=\frac{Z(s,x)}{\int_{\bT^d} Z(s,x')dx'}$. Using the Green's function of SHE, for any $s\geq1$, we can rewrite it as 
 \[
 \rho(s,x)=\frac{\int_{\bT^d} \G_{s,s-1}(x,y)Z(s-1,y)dy}{\int_{\bT^{2d}} \G_{s,s-1}(x',y)Z(s-1,y)dx'dy},
 \]
 which leads to the natural bound 
 \[
 \begin{aligned}
\inf_{x,y} \G_{s,s-1}(x,y)&(\sup_{x,y}\G_{s,s-1}(x,y))^{-1} \\
&\leq  \rho(s,x) \leq \sup_{x,y} \G_{s,s-1}(x,y)(\inf_{x,y} \G_{s,s-1}(x,y))^{-1}.
 \end{aligned}
 \]
 Thus, to show \eqref{e.mmbdnew}, it suffices to show the following moment bounds on the Green's function of SHE: for any $p\geq1$, 
 \[
 \EE \Big[ \sup_{s\in[1,2],x,y\in\bT^d}\G_{s,s-1}(x,y)^p\Big]+\EE \Big[ \sup_{s\in[1,2],x,y\in\bT^d}\G_{s,s-1}(x,y)^{-p}\Big] \leq C_p.
 \]
 This follows from a chaining argument, see e.g. \cite[Proof of Lemma 4.1]{GK21}. We do not provide the details here.
 \end{proof}
 
\begin{proof}[Proof of Lemma~\ref{l.Cgozero}] 
By \eqref{e.mmbdnew},   for any $n\in\Z_+$ and $p\geq 1$, we have
 \[
 \EE\Big[ \sup_{t\in[n,n+1]}|\log \rho(t,x)|^p\Big] \leq C_p.
 \]
 Additionally, by \cite[Proposition 5.5]{GK21}, we have 
 \[
 |\chi(\rho(t))| \leq C(1+\sup_x \rho(t,x)).
 \]
Using \eqref{e.mmbdnew} again, we obtain
 \[
 \EE\Big[  \sup_{t\in[n,n+1]}|\chi(\rho(t))|^p \Big]\leq C_p.
 \]
To apply the Borel-Cantelli lemma, we consider any sequence $c_n\to0$ as $n\to\infty$, so 
\begin{equation}\label{e.boca}
\begin{aligned}
&\sum_{n\geq0} \PP\big[\sup_{t\in[n,n+1]}|\log \rho(t,x)|+\sup_{t\in[n,n+1]}|\chi(\rho(t))|>c_n\sqrt{n}\big]\\
&\leq \sum_{n\geq0} \frac{1}{(c_n\sqrt{n})^p} \EE \big[(\sup_{t\in[n,n+1]}|\log \rho(t,x)|+\sup_{t\in[n,n+1]}|\chi(\rho(t))|)^p\big] \leq \sum_{n\geq0} \frac{C_p}{(c_n\sqrt{n})^p}.
\end{aligned}
\end{equation}
Choose $c_n=n^{-\alpha}$ with $\alpha<1/2$ and $p$ such that $p(\frac12-\alpha)>1$, then the above summation is finite, from which we conclude that $\frac{1}{\sqrt{t}}(|\log \rho(t,x)|+ |\chi(\rho(t))|)\to0$ almost surely, which completes the proof.
\end{proof}
 

To complete the proof of Theorem~\ref{t.lil}, it remains to show the law of iterated logarithm for the martingale $\mathscr{M}_t$:
\begin{equation}\label{e.lilma}
\begin{aligned}
&\limsup_{t\to\infty} \frac{\mathscr{M}_t}{\sqrt{2t\log \log t}}=\sigma_L(\beta),\\
&\liminf_{t\to\infty} \frac{\mathscr{M}_t}{\sqrt{2t\log \log t}}=-\sigma_L(\beta).
\end{aligned}
\end{equation}
First, note that the bracket process is \[
\la \mathscr{M}\ra_t= \int_0^t  \mathcal{T}(\rho(s))ds,
\]
with 
\begin{equation}\label{e.defT}
  \mathcal{T}(
  \rho):=\beta^2\int_{\bT^{2d}}\prod_{j=1}^2
  f(y_j)[1-\tfrac12\beta^2\D\chi(\rho,y_j)] R(y_1-y_2)dy_1dy_2.
\end{equation} 
To see $\mathcal{T}(\rho)\in L^1(\Omega)$, we use  \eqref{e.exDchi}, from
which one can actually derive that $\mathcal{D}\chi(\rho,y)$ is
bounded almost surely. Indeed, by Jensen's inequality and \eqref{e.mmbdrho}, we have 
\[
\begin{aligned}
\E_{\rho_1} \frac{\rho_1(y)}{\int_{\bT^d} \rho(y')\rho_1(y') dy'}\leq\E_{\rho_1} \rho_1(y)\sup_{y'}  \rho_1(y')^{-1} \leq C.
\end{aligned}
\]
Thus, $\mathcal{T}(\rho)$ is bounded almost surely  by 
\[
|\mathcal{T}(\rho)| \leq C \int_{\bT^{2d}}\rho(y_1)\rho(y_2) R(y_1-y_2)dy_1dy_2.
\]
Applying \eqref{e.mmbdrho}, we obtain $\EE \mathcal{T}(\rho)\leq
C$. Applying the Birkhoff’s Ergodic Theorem we conclude that
\begin{equation}\label{e.conqv}
\frac{1}{t}\la \mathscr{M}\ra_t\to \sigma_L^2(\beta)
\end{equation}
almost surely. Since $\mathscr{M}_t$ is a continuous martingale and $\la\mathscr{M}\ra_\infty=\infty$ almost surely, by the Dambis-Dubins-Schwarz theorem \cite[Theorem 5.13]{legall}, there exists a standard Brownian motion $\beta$ such that almost surely, 
\[
\mathscr{M}_t=\beta_{\la \mathscr{M}\ra_t}, \quad\quad t\geq0.
\]
By the law of iterated logarithm for Brownian motion, the following almost sure convergence holds: 
\[
\begin{aligned}
&\limsup_{t\to\infty}\frac{\beta_{\la \mathscr{M}\ra_t}}{\sqrt{2\la \mathscr{M}\ra_t\log \log \la \mathscr{M}\ra_t}}=1,\\
&\liminf_{t\to\infty}\frac{\beta_{\la \mathscr{M}\ra_t}}{\sqrt{2\la \mathscr{M}\ra_t\log \log \la \mathscr{M}\ra_t}}=-1.
\end{aligned}
\]
Together with \eqref{e.conqv}, we complete the proof of \eqref{e.lilma}.

\begin{remark}
To prove Theorem~\ref{t.lil}, applying the Birkhoff's Ergodic Theorem in \eqref{e.conqv} is the only place where we used the assumption of $\{\rho(t)\}_{t\geq0}$ starting at stationarity.  For the general case where the projective process does not necessarily start from stationarity, we sketch a possible argument below. Consider another projective process $\{\tilde{\rho}(t)\}_{t\geq0}$, which starts at stationarity and is driven by the same noise. By the previous discussion, we already have 
\[
\frac{1}{t}  \int_0^t \mathcal{T}(\tilde{\rho}(s))ds\to \sigma_L^2(\beta)
\]
almost surely, so it remains to show that 
\[
\frac{1}{t} \int_0^t [\mathcal{T}(\rho(s))-\mathcal{T}(\tilde{\rho}(s))] ds\to0
\]
almost surely. By the   application of the Borel-Cantelli lemma as in \eqref{e.boca}, it reduces to proving 
\begin{equation}\label{e.nonbirk}
 \int_0^\infty \EE |\mathcal{T}(\rho(s))-\mathcal{T}(\tilde{\rho}(s))|^p ds \leq C_p.
\end{equation}
 Applying \eqref{051705-23}, we have for any $p\geq1$,
\[
\EE \sup_{x\in\bT^d} |\rho(s,x)-\tilde{\rho}(s,x)|^p \leq C e^{-\lambda s},
\]
thus, from the expression of $\mathcal{T}$ in \eqref{e.defT}, the only
ingredient we are missing to control the error in \eqref{e.nonbirk} is
an estimate of $\mathcal{D}\chi(v_1,y)-\mathcal{D}\chi(v_2,y)$ for
$v_1,v_2\in D^\infty(\bT^d)$. This could be achieved for example by combining
\cite[Lemma 5.9, Corollary 5.11]{GK21}, or using the explicit formula for
$\mathcal{D}\chi$ furnished in \eqref{e.exDchi} (recall that it is
also valid out of stationarity by Remark~\ref{r.8231}). 
\end{remark}

\section{Open problems}
\label{s.open}

In this section, we present several open problems   worth further exploration.

\begin{enumerate}
\item \emph{Improve Theorem~\ref{t.123}}.  The goal is to extend the
  theorem to cover both the relaxation and sub-relaxation
  regimes. Using the notations there, with $L\sim t^\alpha$, the
  regime of $\alpha=2/3$ is the relaxation regime, and $\alpha>2/3$ is
  the sub-relaxation regime. We expect that for all $\alpha\geq2/3$,
  the growth of the surface will satisfy  $\mathrm{Var}\, h(t,0)\asymp
  t^{2/3}$. Our current perturbative approach does not   cover these regimes.

\item \emph{Open KPZ equation}. We expect an analogous result of Theorem~\ref{t.123} for the open KPZ equation. The invariant measures in this case are explicitly known \cite{BCY23}, thus the primary focus should be on the $\sigma_L^2(\beta)$ given by \eqref{e.exvar}. Unlike the periodic setting, we deal with an interval $[a,b]$ instead of a torus. Consequently, the asymptotic behavior of $\mathrm{Var} h(t,0)$  depends on whether  $a\to-\infty$, $b\to\infty$, or both, as well as  the boundary conditions   at $a$ and $b$. It is  known that for the half-line open KPZ equation, Gaussian or KPZ fluctuations may occur depending on the boundary conditions \cite{BW23,IMS22}. Thus, investigating the transition based on these conditions would be particularly interesting.

\item \emph{Asymptotics of the effective diffusivity.} For the winding number of the directed polymer in a white noise environment, we have derived a formula for the effective diffusivity $\Sigma(\beta)$ with period $L=1$ (see \eqref{e.exvarwinding}). In general, denote the diffusivity by $\Sigma_L(\beta)$. By scaling, $\Sigma_1(\beta)=\Sigma_L(\beta/\sqrt{L})$. To go from  diffusive to super-diffusive behaviors as $L\to\infty$, it is conjectured that $\Sigma_L(\beta)\sim \sqrt{L}$,  equivalent with $\Sigma_1(\beta)\sim \beta$ as $\beta\to\infty$. An open problem is to prove this conjecture using the formula \eqref{e.exvarwinding} and to further identity the size of the displacement with $L\sim t^{\alpha}$ approaches infinity.

\item \emph{Point-to-point directed polymer}. The proof of the central limit theorems for the height function and the winding number  relies crucially on the exponential mixing of the polymer endpoint process, established in Theorem~\ref{t.ofos}. Besides the spatial periodicity of the random environment, we also assumed that one of the polymer's boundary conditions is periodic. This allows us to work with the polymer path on a cylinder. As a result, our approach does not seem to cover the case of a point-to-point directed polymer. An open problem is to study this scenario, for instance, to prove the central limit theorem for $\log \sfZ(t,0)$ with $\sfZ$ defined in \eqref{e.defsfZ}, and to show that the displacement of the midpoint of the point-to-point directed polymer satisfies a central limit theorem. 

\item \emph{Asymptotics of heat kernel.}    The central limit theorem established in Theorem~\ref{t.winding} only concerns the spatial averaging of $\sfp$ on diffusive scales and does not explore the local behaviors. We expect that a more detailed understanding of $\sfp$ going beyond the CLT could illuminate the transition from diffusive to super-diffusive  behaviors, as well as the shift from homogenization to localization phenomena. For instance, it would be valuable to derive what the analogue of \cite[Equation (28), page 136]{QR14} might be in a periodic setting.
\end{enumerate}

 \end{document}